\newtheorem{thm}[equation]{Theorem}
\newtheorem{cor}[equation]{Corollary}
\newtheorem{lem}[equation]{Lemma}
\newtheorem{prop}[equation]{Proposition}
\theoremstyle{definition}
\newtheorem{defn}[equation]{Definition}
\theoremstyle{remark}
\newtheorem{rem}[equation]{Remark}
\newtheorem{exm}[equation]{Example}
\newcommand{\C}[1]{\mathscr{#1}}
\newcommand{\abs}[1]{|#1|}
\def\op{\mathrm{op}}
\newcommand{\operad}[1]{\operatorname{Op}(#1)}
\newcommand{\algebra}[2]{\operatorname{Alg}_{#2}(#1)}
\newcommand{\algebrau}[2]{\underline{\operatorname{Alg}}_{#2}(#1)}
\newcommand{\algebrapc}[2]{\mathrm{Alg}^{pc}_{#2}(#1)}
\def\r{\rightarrow} 
\def\into{\rightarrowtail}
\def\onto{\twoheadrightarrow}
\newcommand{\deltan}[1]{\mathbf{#1}}
\newcommand{\id}[1]{\mathrm{id}_{#1}}
\newcommand{\Mod}[1]{\mathrm{Mod}(#1)}
\newcommand{\Modpc}[1]{\mathrm{Mod}^{pc}(#1)}
\def\hom{\operatorname{Hom}}
\def\rmap{\operatorname{Map}}
\def\rmapu{\underline{\operatorname{Map}}}
\def\rspecu{\mathbb{R}\underline{\operatorname{Spec}}}
\def\spec{\operatorname{Spec}}
\def\qcoh{\underline{\operatorname{QCoh}}}
\def\perf{\underline{\operatorname{Perf}}}
\def\ho{\operatorname{Ho}}
\def\aut{\operatorname{Aut}}
\def\aff{{\operatorname{Aff}}}
\def\comm{{\operatorname{Comm}}}
\def\st{\stackrel} 
\def\ul{\underline} 
\def\unit{\mathbb{I}} 
\def\To{\longrightarrow}
\def\colim{\mathop{\operatorname{colim}}}
\numberwithin{equation}{section}
\begin{document}

\title
{Moduli spaces of algebras over non-symmetric operads 
}%
\author{Fernando Muro}%
\address{Universidad de Sevilla,
Facultad de Matem\'aticas,
Departamento de \'Algebra,
Avda. Reina Mercedes s/n,
41012 Sevilla, Spain\newline
\indent \textit{Home page}: \textnormal{\texttt{http://personal.us.es/fmuro}}}
\email{fmuro@us.es}

\subjclass[2010]{18D50, 55U35, 14K10}
\keywords{operad, algebra, associative algebra, unital algebra, model category, mapping space, moduli stack}

\begin{abstract}
In this paper we study spaces of algebras over an operad (non-symmetric) in symmetric monoidal model categories. We first compute the homotopy fiber of the forgetful functor sending an algebra to its underlying object, extending a result of Rezk. We then apply this computation to the construction of geometric moduli stacks of algebras over an operad in a homotopical algebraic geometry context in the sense of To\"en and Vezzosi. We show under mild hypotheses that the moduli stack of unital associative algebras is a Zariski open substack of the moduli stack of non-necessarily unital associative algebras.  The classical analogue for finite-dimensional vector spaces was noticed by Gabriel. 
\end{abstract}

\maketitle


\section{Introduction}

Let $\Bbbk$ be a commutative ring. An associative algebra structure on a free $\Bbbk$-module $F$ of rank $n$ with basis $\{e_{1},\dots,e_{n}\}\subset F\cong \Bbbk^{n}$ is determined by structure constants $c^{k}_{ij}$, $1\leq i,j,k\leq n$, such that
$$e_{i}\cdot e_{j}=\sum_{k=1}^{n}c_{ij}^{k}e_{k}.$$
The associativity condition $(e_{i}\cdot e_{j})\cdot e_{k}=e_{i}\cdot (e_{j}\cdot e_{k})$ is equivalent to the following identities of structure constants,
\begin{align*}
\sum_{m=1}^{n} c^{m}_{ij}c_{mk}^{l}=\sum_{m=1}^{n} c_{im}^{l}c_{jk}^{m},\qquad 1\leq l\leq n.
\end{align*}
Therefore, the moduli space of associative algebra structures on a free module of rank $n$ is the finitely presented affine subspace $\spec R\subset\mathbb \mathbb{A}^{n^{3}}$,
\begin{align*}
R&=\Bbbk[c_{ij}^{k}\,;\, 1\leq i,j,k\leq n]\left/\left(
\sum_{m=1}^{n} \left(c^{m}_{ij}c_{mk}^{l}- c_{im}^{l}c_{jk}^{m}\right)\,;\,1\leq i,j,k,l\leq n
\right)\right..
\end{align*}

A unital associative algebra structure on $F$ is given by an associative algebra structure together with a unit element $$\mathbf{1}=\sum_{i=1}^{n} a_{i}e_{i}\in F$$  satisfying the following equations, $1\leq k\leq n$, which are equivalent to  $\mathbf{1}\cdot e_{i}=e_{i}=e_{i}\cdot\mathbf{1}$, $1\leq i\leq n$,
\begin{align*}
(a_{1},\dots,a_{n})\left(
\begin{array}{ccc}
c_{11}^{k}&\cdots&c_{1n}^{k}\\
\vdots&\ddots&\vdots\\
c_{n1}^{k}&\cdots&c_{nn}^{k}
\end{array}
\right)&=\;\st{\begin{array}{c}\scriptstyle k^{\text{th}}\text{ place}\\\downarrow\end{array}}{(0,\dots,1,\dots,0)},\\
\left(
\begin{array}{ccc}
c_{11}^{k}&\cdots&c_{1n}^{k}\\
\vdots&\ddots&\vdots\\
c_{n1}^{k}&\cdots&c_{nn}^{k}
\end{array}
\right)
\left(\!\!\begin{array}{c}
a_{1}\\\vdots\\a_{n}
\end{array}\!\!\right)
&=
\left(\!\!\begin{array}{c}
0\\\vdots\\1\\\vdots\\0
\end{array}\!\!\right)
\leftarrow\;\scriptstyle k^{\text{th}}\text{ place}\,\displaystyle.
\end{align*}
Hence, the moduli space of unital associative algebra structures on a free module of rank $n$ is the affine subspace $\spec S\subset\mathbb{A}^{n^{3}+n}$,
\begin{align*}
S&=R[a_{1},\dots,a_{n}]\left/\left(
\delta_{jk}-\sum_{i=1}^{n} a_{i}c_{ij}^{k},\;\delta_{jk}-\sum_{i=1}^{n} c_{ji}^{k}a_{i}
\;;\; 1\leq j,k\leq n
\right)\right.,
\end{align*}
where $\delta_{kk}=1$ and $\delta_{jk}=0$ if $j\neq k$.

The morphism $f\colon \spec S\rightarrow \spec R$ consisting of forgetting the unit is induced by the inclusion $R\subset S$. This morphism is a categorical monomorphism since an associative algebra may have at most one unit. Moreover, $f$ is a Zariski open immersion, i.e.~it is also flat and of finite presentation, compare \cite[2.1 Lemma]{frto} and \cite[page 4]{gra}. 

Associative algebra structures are somewhat rigid. We are rather interested in them up to isomorphism. The algebraic group $\operatorname{GL}_{n}\cong\aut_{\Bbbk}(F)$ acts on $\spec R$. The orbits are the isomorphism classes of associative algebra structures. The isotropy group at a given point is the automorphism group of the corresponding associative algebra structure. 

In order to obtain a meaningful quotient which remembers all this, we must move to the category of algebraic stacks. The quotient stack
$$\ul{\operatorname{Ass}}_{n}=\spec R/\operatorname{GL}_{n}$$
is the moduli space of associative algebras on rank~$n$ vector bundles. The same applies to $\spec S$, and the quotient 
$$\ul{\operatorname{uAss}}_{n}=\spec S/\operatorname{GL}_{n}$$
is the moduli stack of unital associative algebras on rank~$n$ vector bundles. The morphism $f\colon \spec S\rightarrow \spec R$ induces a morphism between these algebraic stacks
$$\ul{f}\colon\ul{\operatorname{uAss}}_{n}\To \ul{\operatorname{Ass}}_{n}$$
which inherits most properties from $f$, e.g.~$\ul{f}$ is a monomorphism, affine, locally of finite presentation, and
flat, compare  \cite{cha}. 

These stacks can be described as follows. The category $\aff_\Bbbk$ of affine schemes over $\Bbbk$ is opposite to the category  of commutative (associative and unital) $\Bbbk$-algebras. We assume this category is endowed with the \'etale topology. For any commutative  $\Bbbk$-algebra $A$, let $\operatorname{Ass}_n(A)$ be the category of associative $A$-algebras whose underlying $A$-module is locally free of rank $n$. Denote $i\operatorname{Ass}_n(A)$ the subcategory of isomorphisms. Change of coefficient functors give rise to a pseudo-functor
\begin{align*}
\aff_\Bbbk^\op&\To\operatorname{Groupoids},\\
A&\;\mapsto\;i\operatorname{Ass}_n(A),
\end{align*}
which is the `functor of points' of the stack $\ul{\operatorname{Ass}}_n$. Similarly, if $\operatorname{uAss}_n(A)$ is the category of unital associative $A$-algebras whose underlying $A$-module is locally free of rank $n$, the `functor of points' of $\ul{\operatorname{uAss}}_n$ is
\begin{align*}
\aff_\Bbbk^\op&\To\operatorname{Groupoids},\\
A&\;\mapsto\;i\operatorname{uAss}_n(A),
\end{align*}
and the morphism $\ul{f}\colon\ul{\operatorname{uAss}}_{n}\rightarrow \ul{\operatorname{Ass}}_{n}$ is induced by  the functors forgetting the unit $\operatorname{uAss}_n(A)\r\operatorname{Ass}_n(A)$.

In this paper we consider the same situation in a homotopical algebraic geometry (HAG) context in the sense of To\"en and Vezzosi~\cite{hagII}. In such a context, the category of $\Bbbk$-modules is replaced with a symmetric monoidal model category $\C V$ and the category of affine schemes $\aff_{\C V}$ is the opposite of the category of commutative algebras (i.e.~monoids) in $\C V$. There is also a fixed model pre-topology $\tau$ and a class $\mathbf P$ of morphisms which plays the role of smooth morphisms.  A stack is a contravariant functor from affine schemes to simplicial sets, $\aff_{\C V}^{\op}\r\operatorname{Set}^{\Delta^{\op}}$, satisfying certain homotopy invariance and descent properties.

Given an operad $\mathcal O$  in $\C V$ (non-symmetric), admissible in the sense of Definition \ref{admissible}, we define a stack  $\algebrau{\mathcal O}{\C V}$ such that, for any commutative algebra $A$, 
$$\algebrau{\mathcal O}{\C V}(A)\simeq \abs{w\algebra{\mathcal O}{\Mod{A}}}$$
is the classifying space of the category of weak equivalences between $\mathcal O$-algebras in the category $\Mod{A}$ of $A$-modules. We call $\algebrau{\mathcal O}{\C V}$  the \emph{moduli stack of $\mathcal O$-algebras}. 

Notice that we proceed in a way inverse to de classical situation illustrated above. We do not present $\algebrau{\mathcal O}{\C V}$ as a quotient of an affine stack, but in terms of its `functor of points'. The connection to affine stacks is  described below.

Asmissibility is not a very strong condition since all cofibrant operads are admissible. Moreover, the associative and unital associative operads, $\mathtt{Ass}$  and $\mathtt{uAss}$, are always admissible. 

The stack of quasi-coherent modules is a stack $\qcoh$ such that, for any commutative algebra $A$, 
$$\qcoh(A)\simeq \abs{w\Mod{A}}$$
is the classifying space of the category of weak equivalences between $A$-modules. We show that the forgetful functors from $\mathcal O$-algebras to modules give rise to a morphism of stacks
$$\xi^{\mathcal{O}}\colon \algebrau{\mathcal{O}}{\C V}\To\qcoh.$$
The stack $\qcoh$ is too big and one is often interested in smaller substacks satisfying nice geometric properties, such as the stack $\ul{\operatorname{Vect}}_{n}$ of rank $n$ vector bundles, or the stack $\ul{\operatorname{Perf}}$ of perfect modules, see \cite[\S1.3.7]{hagII}. We will consider a generic substack $F\subset\qcoh$ such that the connected components of $F(A)$ are represented by perfect $A$-modules, and often restrict to the substack $\algebrau{\mathcal{O}}{F}\subset \algebrau{\mathcal{O}}{\C V}$ obtained as the homotopy pull-back of $\xi^{\mathcal O}$ along the inclusion $F\subset\qcoh$.

We prove that the restriction 
$$\xi^{\mathcal{O}}_{F}\colon \algebrau{\mathcal{O}}{F}\To F$$ 
is an affine morphism (Theorem \ref{lageo}). The homotopy fiber of $\xi^{\mathcal O}$ at an $A$-point $\rspecu(A)\r F$ represented by a perfect $A$-module $M$ is an affine stack 
$$\rmapu_{\operad{\C V}}(\mathcal{O},\mathtt{End}_{\Mod{A}}(M))$$
called \emph{moduli stack of $\mathcal O$-algebra structures on $M$}, since, for any $A$-algebra $B$, 
$$\rmapu_{\operad{\C V}}(\mathcal{O},\mathtt{End}_{\Mod{A}}(M))(B)\simeq
\rmap_{\operad{\C V}}(\mathcal{O},\mathtt{End}_{\Mod{B}}(\widetilde{M\otimes_{A}^{\mathbb L}B}))$$
is the mapping space in the model category $\operad{\C V}$ of operads in $\C V$ from $\mathcal O$ to the endomorphism operad of a fibrant-cofibrant replacement of the $B$-module $M\otimes_{A}^{\mathbb L}B$, so it does classify derived $\mathcal O$-algebra structures on derived extensions of scalars of $M$.

A consequence of this fact is that $\algebrau{\mathcal{O}}{F}$ is geometric provided $F$ is, e.g.~$F=\ul{\operatorname{Vect}}_{n}$ in many HAG contexts, and $F=\ul{\operatorname{Perf}}$ in the weak complicial and brave new algebraic geometry contexts. Notice that  $\algebrau{\mathtt{Ass}}{\ul{\operatorname{Vect}}_{n}}$ and $\algebrau{\mathtt{uAss}}{\ul{\operatorname{Vect}}_{n}}$ are the immediate generalizations of $\ul{\operatorname{Ass}}_{n}$ and $\ul{\operatorname{uAss}}_{n}$ above.

We finally tackle our goal. The stack  $\algebrau{\mathcal O}{\C V}$ is a contravariant functor in $\mathcal O$. The operad morphism $\phi\colon\mathtt{Ass}\r \mathtt{uAss}$ which models the forgetful functor from unital associative algebras to associative algebras induces a morphism of stacks
\begin{equation}\label{laclau0}
\algebrau{\phi}{\C V}\colon \algebrau{\mathtt{uAss}}{\C V}\To \algebrau{\mathtt{Ass}}{\C V},
\end{equation}
which generalizes $\ul f$ above.

Similarly, $\phi$ induces a morphism of affine stacks
\begin{equation}\label{laclau0.5}
\rmapu_{\operad{\C V}}(\mathtt{uAss},\mathtt{End}_{\Mod{A}}(M))\To \rmapu_{\operad{\C V}}(\mathtt{Ass},\mathtt{End}_{\Mod{A}}(M)).
\end{equation}
which generalizes $f$.

Our main results on the properties of these morphisms are summarized in the following theorem. We refer the reader to \cite{hagII} for the definition of the geometric terms in the statement.

\begin{thm}\label{may}
Consider the morphism $\algebrau{\phi}{F}\colon \algebrau{\mathtt{uAss}}{F}\r \algebrau{\mathtt{Ass}}{F}$ obtained by restricting \eqref{laclau0}. Let $M$ be a perfect $A$-module.
\begin{enumerate}
\item The morphism $\algebrau{\phi}{F}$ is affine, i.e.~$(-1)$-representable. The morphism \eqref{laclau0.5} is obviously affine since it has affine source and target.
\item If $\C V$ is simplicial or complicial, then \eqref{laclau0} and \eqref{laclau0.5} are monomorphisms of stacks, in particular so is $\algebrau{\phi}{F}$.
\item If in addition  $\C V$ is  locally finitely presentable as a category, finitely generated as a model category, and the tensor unit is  finitely presented, then \eqref{laclau0.5}  is a finitely presented morphism of affine stacks and $\algebrau{\phi}{F}$ is categorically locally finitely presented.
\end{enumerate}
\end{thm}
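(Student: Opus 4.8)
The plan is to analyze the morphism $\phi\colon\mathtt{Ass}\r\mathtt{uAss}$ of operads and transfer its properties through the general machinery established earlier in the paper. The three statements are successively stronger, so I would organize the argument around a single core computation of the homotopy fiber of $\algebrau{\phi}{\C V}$ (or equivalently of \eqref{laclau0.5}) and then read off each conclusion.

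\emph{Part (1).} Since $\algebrau{\phi}{F}$ is obtained by homotopy pull-back from \eqref{laclau0} along $F\subset\qcoh$, and affineness (being $(-1)$-representable) is stable under homotopy pull-back, it suffices to establish that the fibers are affine. By Theorem \ref{lageo} the morphisms $\xi^{\mathtt{Ass}}_F$ and $\xi^{\mathtt{uAss}}_F$ are affine, with fibers the affine stacks $\rmapu_{\operad{\C V}}(\mathtt{Ass},\mathtt{End}_{\Mod{A}}(M))$ and $\rmapu_{\operad{\C V}}(\mathtt{uAss},\mathtt{End}_{\Mod{A}}(M))$. Hence in the commuting triangle over $F$ the morphism $\algebrau{\phi}{F}$ induces on fibers precisely the morphism \eqref{laclau0.5}, which has affine source and target; a morphism between affine stacks is affine, and an affine morphism over an affine-over-$F$ base is affine over $F$, so $\algebrau{\phi}{F}$ is affine. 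The second sentence of (1) is then immediate.

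\emph{Part (2).} A morphism of stacks is a monomorphism iff its diagonal is an equivalence, equivalently iff all its homotopy fibers are either empty or weakly contractible. Because everything is a homotopy pull-back of \eqref{laclau0.5}, it is enough to show \eqref{laclau0.5} is a monomorphism, and for this it is enough to show that for every $A$-algebra $B$ the map of mapping spaces
\[
\rmap_{\operad{\C V}}(\mathtt{uAss},\mathtt{End}_{\Mod{B}}(N))\To \rmap_{\operad{\C V}}(\mathtt{Ass},\mathtt{End}_{\Mod{B}}(N))
\]
is a homotopy monomorphism (inclusion of a union of path components), where $N=\widetilde{M\otimes_A^{\mathbb L}B}$. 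This is the homotopical incarnation of the classical fact that an associative algebra has at most one unit. The hypothesis that $\C V$ is simplicial or complicial is what lets me compute these mapping spaces: using a cofibrant resolution of $\mathtt{uAss}$ relative to $\mathtt{Ass}$, the relative mapping space is the space of coherent extensions of an $A_\infty$-structure to a unital $A_\infty$-structure, and one shows the homotopy fiber over a fixed associative (or $A_\infty$-) structure is either empty or contractible — the space of ``homotopy units'' is either empty or contractible, by a standard obstruction/contractibility-of-choices argument (the relevant simplicial set of units is, when nonempty, the nerve of a category with a terminal-like structure, or one argues via a deformation retraction). I would isolate this as a lemma about the unital versus non-unital $A_\infty$-operads. \textbf{This contractibility statement is the main obstacle}: the classical uniqueness of units must be upgraded to the assertion that the whole \emph{space} of homotopy-coherent units is contractible when inhabited, and getting this cleanly in the generality of an arbitrary simplicial or complicial $\C V$ is the technical heart of the argument.

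\emph{Part (3).} Granting (2), the morphism \eqref{laclau0.5} is a monomorphism of affine stacks, so it is determined by the corresponding map of commutative algebras $B_{\mathtt{Ass}}\to B_{\mathtt{uAss}}$ computing the two mapping spaces; to say it is a finitely presented morphism of affine stacks is to say this algebra map is a homotopically finitely presented morphism, i.e.\ $B_{\mathtt{uAss}}$ is a homotopically finitely presented $B_{\mathtt{Ass}}$-algebra. The extra hypotheses — $\C V$ locally finitely presentable, finitely generated as a model category, tensor unit finitely presented — guarantee that the relevant cofibrant operad resolution of $\phi$ can be chosen with finitely many generating operations in each relevant arity (the unit operation in arity $0$ and finitely many coherence cells), so that the classifying algebra of the relative mapping space is obtained from $B_{\mathtt{Ass}}$ by attaching finitely many generators and relations, hence is homotopically of finite presentation over it. Finally, ``categorically locally finitely presented'' for $\algebrau{\phi}{F}$ follows because affine morphisms that are finitely presented on an fpqc/étale cover descend this property, and $F$ is covered by affines $\rspecu(A)$ over which the pullback of $\algebrau{\phi}{F}$ is exactly \eqref{laclau0.5} up to the derived base change $M\mapsto \widetilde{M\otimes_A^{\mathbb L}B}$, which preserves perfectness and hence the finiteness of the resolution; so the finite-presentation conclusion is local on $F$ and we are done.
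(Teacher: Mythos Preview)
Your argument for (1) is correct and matches the paper's Proposition \ref{qc}.

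For (2), you correctly identify the crux: one needs that $\phi^*$ on mapping spaces is a homotopy monomorphism, i.e.\ that the space of homotopy units is empty or contractible. You call this a ``standard obstruction/contractibility-of-choices argument,'' but it is not standard at all; the paper does not prove it directly but invokes the main theorem of \cite{udga}, which establishes that $\phi\colon\mathtt{Ass}\to\mathtt{uAss}$ is a homotopy epimorphism in $\operad{\C V}$ when $\C V$ is simplicial or complicial. So your outline is right, but the key step is an external nontrivial theorem, not a lemma one writes down in passing.

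For (3) there is a genuine gap. You assert that a cofibrant resolution of $\phi$ can be chosen with finitely many generating cells, so that $B_{\mathtt{uAss}}$ is obtained from $B_{\mathtt{Ass}}$ by finitely many attachments. This is exactly what fails: a cofibrant replacement of $\mathtt{uAss}$ relative to $\mathtt{Ass}$ (a unital $A_\infty$-type operad) has infinitely many coherence cells, and there is no a priori reason the resulting relative algebra is homotopically finitely presented. The paper circumvents this by introducing an auxiliary operad $\mathtt{Ass}^{\operatorname{qu}}$, defined by a \emph{finite} pushout encoding only a quasi-unit and two homotopies, so that $\rmapu(\mathtt{Ass}^{\operatorname{qu}},\mathtt{End}(M))$ is represented by a commutative $B_{\mathtt{Ass}}$-algebra which is visibly finitely presented. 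The nontrivial step is then to show that $\rmapu(\mathtt{uAss},\mathtt{End}(M))$ is a homotopy \emph{retract} of $\rmapu(\mathtt{Ass}^{\operatorname{qu}},\mathtt{End}(M))$ (Propositions \ref{retracto} and \ref{retracto2}); this retraction uses that the image of $\pi_0\rmap(\mathtt{Ass}^{\operatorname{qu}},-)$ in $\pi_0\rmap(\mathtt{Ass},-)$ lands in the components hit by $\pi_0\rmap(\mathtt{uAss},-)$, which in turn requires Lurie's characterization of quasi-unital algebras \cite[Theorem 5.2.3.5]{lurieha}. Finite presentation then follows because retracts of homotopically finitely presented objects are homotopically finitely presented. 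Your direct approach does not work without this detour.
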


The crucial tool in the proof of (1) is a non-symmetric generalization of one of the main theorems in Rezk's thesis \cite[Theorem 1.1.5]{rezkphd}, which computes the homotopy fibers of the forgetful functor sending an $\mathcal O$-algebra to the underlying object, see Theorem \ref{rezk} below. The proof of (2) is based in the main theorem of \cite{udga}. For the proof of (3) we use Lurie's \cite[Theorem 5.2.3.5]{lurieha}.

The only property of affine Zariski open immersions we do not address in Theorem \ref{may} is flatness. Flatness is defined in arbitrary HAG contexts for morphisms between affine stacks, but otherwise it is only defined for morphisms between geometric stacks whenever flatness is a local property for $\tau$, so it may well happen that it does not make sense to speak about the flatness of $\algebrau{\phi}{F}$ in a given HAG context. This contrasts with the properties considered in Theorem \ref{may}, which are defined in all HAG contexts. Nevertheless, in most HAG contexts of interest $\C V$ is stable, with the remarkable exception of the derived algebraic geometry context. In stable contexts, all morphisms of affine stacks are flat, hence flatness is a local property and all morphisms between geometric stacks are flat, so \eqref{laclau0.5} and $\algebrau{\phi}{F}$ would be flat in this situation if $F$ is geometric. It would be interesting to check whether this remains true in derived algebraic geometry.

To finish, in contrast with the classical situation, we show with an example that the stack $\rmapu_{\operad{\C V}}(\mathtt{Ass},\mathtt{End}_{\Mod{A}}(M))$ need not be finitely presented. Our example is in complicial algebraic geometry, in the two HAG contexts considered in \cite[\S 2.3]{hagII}. It is simply given by $A=\Bbbk=\mathbb Q$ and $M=\Sigma^{n}\mathbb Q$ for $n\leq -2$.

This paper is very much indebted to  ideas and results in \cite{rezkphd} and \cite{hagII}. 

In the derived algebraic geometry context, where $\C V=\Mod{\Bbbk}^{\Delta^{\op}}$ is the category of simplicial modules over a commutative ring $\Bbbk$, the stack $\algebrau{\mathcal{O}}{\ul{\operatorname{Vect}}_{n}}$ was considered in \cite[\S 2.2.6.2]{hagII} for $\mathcal O$ an operad  of projective $\Bbbk$-modules regarded as a constant simplicial operad. However, the homotopy invariance and descent properties were left to the reader. The same happened with $\algebrau{\mathtt{Ass}}{\ul{\operatorname{Perf}}}$ in the complicial algebraic geometry contex, where $\C V=\operatorname{Ch}(\Bbbk)$ is the category of chain complexes over a commutative $\mathbb Q$-algebra $\Bbbk$, see \cite[\S 2.3.3.2 and \S 2.3.5.3]{hagII}. The proofs carried out in this paper subsume these tasks  To\"en and Vezzosi left to the reader.

In the derived algebraic geometry context, the map $\xi^{\mathcal{O}}_{\ul{\operatorname{Vect}}_{n}}\colon \algebrau{\mathcal{O}}{\ul{\operatorname{Vect}}_{n}}\r \ul{\operatorname{Vect}}_{n}$ was considered in \cite[\S 2.2.6.2]{hagII}, for $\mathcal O$ as above. There, it is proved that $\xi^{\mathcal O}_{\ul{\operatorname{Vect}}_{n}}$ is affine and hence $\algebrau{\mathcal{O}}{\ul{\operatorname{Vect}}_{n}}$ is geometric. The main tool used by To\"en and Vezzosi is Rezk's  aforementioned theorem, which is only for (symmetric) operads in $\operatorname{Set}^{\Delta^{\op}}$ or $\Mod{\Bbbk}^{\Delta^{\op}}$. 

Similarly, in the complicial algebraic geometry context, To\"en and Vezzosi consider the map $\xi^{\mathcal{O}}_{\ul{\operatorname{Perf}}}\colon \algebrau{\mathcal{O}}{\ul{\operatorname{Perf}}}\r \ul{\operatorname{Perf}}$ in \cite[\S 2.3.3.2 and \S 2.3.5.3]{hagII}. They invoke Rezk's theorem to show that $\xi^{\mathcal{O}}_{\ul{\operatorname{Perf}}}$ is affine and that $\algebrau{\mathcal{O}}{\ul{\operatorname{Perf}}}$ is geometric. However, Rezk did not prove his theorem for $\operatorname{Ch}(\Bbbk)$, the base symmetric monoidal model category of the complicial algebraic geometry context. Certainly, Rezk's proof does not extend to $\operatorname{Ch}(\Bbbk)$. For instance, Rezk uses in a crucial way the simplicial structure of $\operatorname{Set}^{\Delta^{\op}}$ and $\Mod{\Bbbk}^{\Delta^{\op}}$, but $\operatorname{Ch}(\Bbbk)$  is the paradigm of model category which is not simplicial. Our proof of the generalization of Rezk's theorem follows Rezk in spirit, but it is quite different in practice. In particular, our results fill this gap in \cite[\S 2.3.3.2 and \S 2.3.5.3]{hagII}.

The paper is structured as follows. In Section \ref{oa} we fix terminology concerning model categories and monoidal structures, and recall some notions about operads and their algebras. Section \ref{endinv} considers homotopy invariance properties of endomorphism operads, which play a role in the proof of the generalization of Rezk's theorem, in Section \ref{mapping}. 

The moduli stacks of algebras and algebra structures over an admissible operad are constructed in Section \ref{geometry}. The existence of $\algebrau{\mathcal O}{\C V}$ is far from obvious. The proofs of invariance and descent properties depend heavily on the homotopy theory of non-symmetric operads developed in \cite{htnso,htnso2}. Neither is obvious the existence of $\xi^{\mathcal O}$, nor the contravariant functoriality of $\algebrau{\mathcal O}{\C V}$. We also prove in this section those properties of $\xi^{\mathcal O}$ and the moduli stacks of algebras and algebra structures which hold for any admissible operad $\mathcal O$. 


Finally, in Section \ref{final} we concentrate in $\mathcal O=\mathtt{Ass},\mathtt{uAss}$ and in the maps \eqref{laclau0} and  \eqref{laclau0.5}. Theorem \ref{may} follows directly from the results in that section. 

At the end of the paper, there is a short appendix on a technical property of the tensor unit of a monoidal model category, which is always satisfied by the underlying symmetric monoidal model category $\C V$ of a HAG context. It is also satisfied when the tensor unit is cofibrant. The interest of this axiom comes from the fact that in some HAG contexts of interest the tensor unit is not cofibrant, e.g.~in brave new algebraic geometry.

\subsection*{Acknowledgements}
The author was partially supported
by the Andalusian Ministry of Economy, Innovation and Science under the grant FQM-5713, by the Spanish Ministry of Education and
Science under the MEC-FEDER grant  MTM2010-15831, and by the Government of Catalonia under the grant SGR-119-2009. He is grateful to Behrang Noohi and Fank Neumann for their interest and for conversations related to the contents of this paper.

\section{Operads and  algebras}\label{oa}

This section contains some background about operads and their algebras. 
All operads considered in this paper are non-symmetric. 

We will be mostly dealing with (symmetric) monoidal model categories. Model structures are not really relevant in this section, but we prefer to fix now the standard assumptions needed in most of the paper.

\begin{defn}\label{nota1}
A \emph{monoidal model category} $\C{C}$ in the sense of \cite[Definition 4.2.6]{hmc} is a biclosed   monoidal category  endowed with a model structure such that the \emph{push-out product axiom} \cite[Definitions 3.1]{ammmc} and the \emph{unit axiom} hold. These two axioms imply that the homotopy category $\ho\C C$ has an induced biclosed   monoidal structure \cite[\S4.3]{hmc}. The tensor product is denoted by $\otimes$ and the tensor unit by $\unit$. If we need to distinguish between different monoidal categories we add a subscript, e.g.~$\otimes_{\C C}$ and $\unit_\C{C}$.

We further assume that monoidal model categories satisfy the \emph{monoid axiom} \cite[Definition 9.1]{htnso} and the \emph{strong unit axiom} \cite[Definition A.9]{htnso2}, recalled in Remark \ref{sua} below. They guarantee the existence of transferred model structures on monoids, operads, algebras over operads, etc. See \cite{ammmc,htnso}. The strong unit axiom
allows the transfer of Quillen equivalences and always holds if the tensor unit is cofibrant, see \cite{htnso2}.

We will also assume that all monoidal model categories are cofibrantly generated by sets of generating (trivial) cofibrations with presentable sources.

A \emph{symmetric monoidal model category} $\C{V}$ is a monoidal model category as above whose underlying monoidal category is symmetric. In this case the monoid axiom simplifies, see \cite[Definitions 3.3]{ammmc}.
\end{defn}

For the rest of this section, we fix a symmetric monoidal model category $\C V$.

\begin{defn}\label{defop}
 An \emph{operad} $\mathcal O$ in $\C V$ is a sequence $\mathcal O=\{\mathcal O(n)\}_{n\geq 0}$
of objects in $\C V$ equipped with an \emph{identity},
$$\id{\mathcal{O}}\colon\unit\r \mathcal{O}(1),$$ 
and  \emph{composition laws}, $1\leq i\leq p$, $q\geq 0$,
$$\circ_i\colon \mathcal{O}(p)\otimes \mathcal{O}(q)\To \mathcal{O}(p+q-1),$$
satisfying certain associativity and unit equations, see \cite[Remark 2.6]{htnso}. We refer to $\mathcal O(n)$ as the \emph{arity} $n$ component of $\mathcal O$.

A \emph{morphism of operads} $f\colon \mathcal O\r\mathcal P$ is a sequence of morphisms $f(n)\colon \mathcal O(n)\r\mathcal P(n)$ in $\C V$, $n\geq 0$, compatible with the identities and composition laws in the obvious way. 
We denote $\operad{\C V}$ the model category of operads in $\C V$. An operad morphism $f$ is a fibration (resp.~weak equivalence) if $f(n)$ is a fibration (resp.~weak equivalence) in $\C V$ for all $n\geq 0$ \cite[Theorem 1.1]{htnso}.
\end{defn}

\begin{exm}\label{uass0}
The \emph{unital associative operad} $\mathtt{uAss}$ is defined as follows: $\mathtt{uAss}(n)=\unit$ is the tensor unit for all $n\geq 0$, $\circ_{i}\colon \mathtt{uAss}(p)\otimes \mathtt{uAss}(q)\r \mathtt{uAss}(p+q-1)$ is the unit isomorphism $\unit\otimes\unit\cong \unit$ in all cases, and $\id{\texttt{uAss}}$ is the identity map in $\unit$.

The \emph{associative operad} $\mathtt{Ass}$ is defined by  $\mathtt{uAss}(0)=\varnothing$ the initial object and by the existence of an operad morphism $\phi\colon \mathtt{Ass}\r \mathtt{uAss}$ such that $\phi(n)$ is the identity for $n\geq 1$.
\end{exm}

\begin{rem}\label{examples0}
If $\C{V}=\operatorname{Set}$ is the category of sets, the identity is simply an element $\id{\mathcal{O}}\in\mathcal{O}(1)$ and the associativity and unit equations are:
\begin{enumerate}
\item $(x\circ_i y)\circ_j z=(x\circ_j z)\circ_{i+q-1}y$ if $1\leq j<i$ and $z\in\mathcal{O}(q)$.
\item $(x\circ_i y)\circ_j z=x\circ_i( y\circ_{j-i+1}z)$ if $y\in\mathcal{O}(p)$ and $i\leq j <p+i$.
\item $\id{\mathcal{O}}\circ_1 x=x$.
\item $x\circ_i \id{\mathcal{O}}=x$.
\end{enumerate}
The same happens if $\C{V}=\Mod{\Bbbk}$ is the category of modules over a commutative ring $\Bbbk$.

If $\C{V}=\Mod{\Bbbk}^{\mathbb Z}$ is the category of $\mathbb{Z}$-graded $\Bbbk$-modules then the identity must be in degree $0$, $\id{\mathcal{O}}\in\mathcal{O}(1)_{0}$, and  (1) must be replaced with
\begin{itemize}
\item[$(1')$] $(x\circ_i y)\circ_j z=(-1)^{|y||z|}(x\circ_j z)\circ_{i+q-1}y$ if $1\leq j<i$ and $z\in\mathcal{O}(q)$.
\end{itemize}
This reflects the use of the Koszul sign rule in the definition of the symmetry constraint for the tensor product in $\Mod{\Bbbk}^{\mathbb Z}$.

Furthermore, if $\C{V}=\operatorname{Ch}(\Bbbk)$ is the category of differential graded $\Bbbk$-modules, in addition the identity must be a cycle, $d(\id{\mathcal{O}})=0$, and the differential must behave as a derivation with respect to all composition laws,
$$d(x\circ_{i}y)=d(x)\circ_{i} y+(-1)^{|x|}x\circ_{i}d(y).$$
In this paper, differentials have degree $|d|=-1$, i.e.~we consider chain complexes.

In all these cases, the compatibility conditions an operad morphism $f\colon\mathcal O\r\mathcal P$ must satisfy are
$$f(p)(x)\circ_if(q)(y)=f(p+q-1)(x\circ_iy),\qquad f(1)(\id{\mathcal O})=\id{\mathcal P}.$$
\end{rem}

\begin{defn}\label{nota2}
A \emph{model $\C V$-algebra} $\C C$ is a monoidal model category, in the sense of Definition \ref{nota1}, equipped with a left Quillen functor $z\colon\C{V}\r\C{C}$ and natural isomorphisms
\begin{align*}
\text{\emph{multiplication}}\colon z(X)\otimes_{\C C} z(X')&\st{}\To z(X\otimes_{\C V} X'),\\
\text{\emph{unit}}\colon \unit_{\C C}&\st{}\To z(\unit_{\C V}),\\
\zeta(X,Y)\colon z(X)\otimes_{\C C} Y &\st{}\To Y\otimes_{\C C} z(X),
\end{align*}
satisfying some coherence laws, see \cite[Definition 6.4.1]{borceux2} and \cite[\S 7]{htnso}.

We also assume that $z$ satisfies the \emph{$\mathbb I$-cofibrant axiom} \cite[Definition B.6]{htnso2}. Hence, Quillen equivalent $\C V$-algebras have Quillen equivalent categories of algebras over a same nice enough operad, see \cite[Theorem D.11]{htnso2}. 

A \emph{symmetric model $\C V$-algebra} $\C C$ is a symmetric monoidal model category equipped with a strong symmetric monoidal left Quillen functor $z\colon\C{V}\r\C{C}$ satisfying the $\mathbb I$-cofibrant axiom. 

In Hovey's terminology, a model $\C V$-algebra is the same as a central monoidal $\C V$-model category, and a symmetric model $\C V$-algebra is a symmetric $\C V$-model category, see the paragraph after \cite[Definition 4.2.20]{hmc}.
\end{defn}

For the rest if this section, let us fix a model $\C V$-algebra $\C C$.

\begin{exm}\label{ejemplotrivial}
The trivial example of symmetric model $\C V$-algebra is $\C C=\C V$ and $z$ the identity functor.
\end{exm}

\begin{rem}
The functor $z(-)\otimes Y\colon\C{V}\r \C{C}$
has a right adjoint 
\begin{align*}
\hom_{\C C}(Y,-)\colon&\C{C}\To \C{V}
\end{align*}
for any $Y$ in $\C C$. These morphism objects define a tensored $\C V$-enrichment of $\C C$ over $\C V$, see \cite[Appendix]{anamc}. Moreover, this enrichment can be enhanced to a monoidal $\C V$-category structure on $\C C$. For this, given two objects $Y$ and  $Y'$ in $\C C$, we define the \emph{evaluation morphism}
$$\text{evaluation}\colon z(\hom_{\C C}(Y,Y'))\otimes Y\To Y',$$
as the adjoint of the identity in $\hom_{\C C}(Y,Y')$, and given two other objects $X$ and $X'$ in $\C C$ we definine the morphism in $\C V$
$$\otimes_{\C C} \colon \hom_{\C{C}}(X,X')\otimes_{\C V} \hom_{\C{C}}(Y,Y')\longrightarrow \hom_{\C{C}}(X\otimes_{\C C} Y, X'\otimes_{\C C} Y')$$
as the adjoint of
$$\xymatrix{
z(\hom_{\C{C}}(X,X')\otimes_{\C V} \hom_{\C{C}}(Y,Y'))\otimes_{\C C} X\otimes_{\C C} Y\ar[d]^\cong_{\text{multiplication}^{-1}\otimes_{\C C} \id{X\otimes_{\C C}Y}}\\ 
z(\hom_{\C{C}}(X,X'))\otimes_{\C C} z(\hom_{\C{C}}(Y,Y'))\otimes_{\C C} X\otimes_{\C C} Y
\ar[d]_{\zeta(\hom_{\C{C}}(Y,Y'), X)}^\cong\\ 
z(\hom_{\C{C}}(X,X'))\otimes_{\C C} X\otimes_{\C C} z(\hom_{\C{C}}(Y,Y'))\otimes_{\C C} Y\ar[d]_{\text{evaluation} \otimes_{\C C} \text{evaluation}}\\X' \otimes_{\C C} Y'}
$$
\end{rem}

\begin{defn}
Let $\mathcal{O}$ be an operad in $\C V$. An \emph{$\mathcal{O}$-algebra $A$ in $\C C$} is an object of~$\C C$ equipped with structure morphisms
\begin{equation*}
\nu_n\colon z(\mathcal{O}(n))\otimes A^{\otimes n}\To A,\qquad n\geq0,
\end{equation*}
satisfying compatibility relations with the composition laws and the unit of~$\mathcal{O}$, see \cite[Definition 7.1]{htnso}.

An \emph{$\mathcal{O}$-algebra morphism} $g\colon A\r B$ is a morphism in $\C C$ compatible with the structure morphisms in the obvious way, see again \cite[Definition 7.1]{htnso}.  We denote $\algebra{\mathcal O}{\C C}$ the model category of $\mathcal{O}$-algebras $A$ in $\C C$, where a morphism is a fibration (resp.~weak equivalence) if its underlying morphism in $\C C$ is a fibration (resp.~weak equivalence) \cite[Theorem 1.2]{htnso}.
\end{defn}

\begin{exm}\label{uass1}
Algebras over the unital associative operad 
$\mathtt{uAss}$ in Example \ref{uass0} are unital  associative algebras in $\C C$, i.e.~monoids, and algebras over the associative operad $\mathtt{Ass}$ are associative algebras in $\C C$, i.e.~non-unital monoids.
\end{exm}

\begin{rem}\label{examples2}
Let $\C V=\C C$ and $z$ the identity functor. If $\C V=\operatorname{Set}$ or $\Mod{\Bbbk}$, as in Remark \ref{examples0}, and we denote $$\nu_n(x, a_1,\dots, a_n)=x(a_1,\dots,a_n),$$ then the relations an $\mathcal O$-algebra must satisfy are:
\begin{enumerate}
\item $(x\circ_i y)(a_1,\dots,a_{p+q-1})=x(a_1,\dots,a_{i-1},y(a_i,\dots,a_{i+q-1}),a_{i+q},\dots,a_{p+q-1})$ if $x\in\mathcal{O}(p)$ and $y\in\mathcal{O}(q)$.
 \item $\id{\mathcal O}(a)=a$.
\end{enumerate}

If $\C{V}=\Mod{\Bbbk}^{\mathbb Z}$ then (1) must be replaced with
\begin{itemize}
\item[$(1')$] $(x\circ_i y)(a_1,\dots,a_{p+q-1})$\newline$=(-1)^{|y|\sum_{j=1}^{i-1}|a_j|}x(a_1,\dots,a_{i-1},y(a_i,\dots,a_{i+q-1}),a_{i+q},\dots,a_{p+q-1})$\newline if $x\in\mathcal{O}(p)$ and $y\in\mathcal{O}(q)$.
\end{itemize}
Moreover, if $\C{V}=\operatorname{Ch}(\Bbbk)$, then in addition the following derivation-like formula holds,
\begin{align*}
d(x(a_1,\dots,a_n))&=d(x)(a_1,\dots,a_n)+\sum_{i=1}^{n}(-1)^{|x|+\sum_{j=1}^{i-1}|a_{j}|}x(a_1,\dots,d(a_{i}),\dots,a_n).
\end{align*}

In all these cases the compatibility conditions that an $\mathcal O$-algebra morphism $g\colon A\r B$ must satisfy are $$g(x(a_1,\dots,a_n))=x(g(a_1),\dots,g(a_n)).$$
\end{rem}

Algebras over an operad can be alternatively described by means of endomorphism operads.

\begin{defn}
The \emph{endomorphism operad} of an object $Y$ in $\C C$ is the operad $\texttt{End}_{\C C}(Y)$ in $\C V$ with 
$$\texttt{End}_{\C C}(Y)(n)=\hom_{\C C}(Y^{\otimes n},Y).$$
The identity of this operad is the $\C V$-enriched identity in $Y$. Composition laws are described in \cite[Definition 7.1]{htnso}. 
\end{defn}

\begin{rem}\label{examples3}
In the first (resp.~last) two examples of Remark \ref{examples2}, composition laws in the endomorphism operad are given by equation $(1)$ (resp.~$(1')$).
\end{rem}

\begin{lem}
For any operad $\mathcal{O}$ in $\C V$ and any object $Y$ in $\C C$, there is a bijection between the morphisms $\mathcal{O}\r \mathtt{End}_{\C C}(Y)$ in $\operad{\C V}$ and the $\mathcal{O}$-algebra structures on~$Y$.
\end{lem}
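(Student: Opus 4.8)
The plan is to exhibit the bijection explicitly and check it is well-defined in both directions. An $\mathcal{O}$-algebra structure on $Y$ consists of structure morphisms $\nu_n\colon z(\mathcal{O}(n))\otimes Y^{\otimes n}\to Y$ for $n\geq 0$ satisfying the compatibility relations with the composition laws and identity of $\mathcal{O}$. An operad morphism $f\colon\mathcal{O}\to\mathtt{End}_{\C C}(Y)$ consists of morphisms $f(n)\colon\mathcal{O}(n)\to\hom_{\C C}(Y^{\otimes n},Y)$ in $\C V$, compatible with identities and composition laws. The key observation is the adjunction: since $z(-)\otimes Y^{\otimes n}\colon\C V\to\C C$ is left adjoint to $\hom_{\C C}(Y^{\otimes n},-)\colon\C C\to\C V$, there is a natural bijection between morphisms $z(\mathcal{O}(n))\otimes Y^{\otimes n}\to Y$ in $\C C$ and morphisms $\mathcal{O}(n)\to\hom_{\C C}(Y^{\otimes n},Y)$ in $\C V$. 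Under this adjunction, $\nu_n$ corresponds to $f(n)$, and conversely $f(n)$ corresponds to $\nu_n$ obtained as the composite $z(\mathcal{O}(n))\otimes Y^{\otimes n}\xrightarrow{z(f(n))\otimes\id{}}z(\hom_{\C C}(Y^{\otimes n},Y))\otimes Y^{\otimes n}\xrightarrow{\text{evaluation}}Y$.

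First I would set up this levelwise adjunction bijection, which is immediate. Then I would show that the compatibility conditions on the two sides match. Concretely, the condition that $f$ respects the identity $\id{\mathcal O}\colon\unit\to\mathcal O(1)$ translates, under the adjunction, precisely into the unit equation $\nu_1\circ(z(\id{\mathcal O})\otimes\id Y)=\id Y$ for an $\mathcal{O}$-algebra, because the identity of $\mathtt{End}_{\C C}(Y)$ is by definition the $\C V$-enriched identity of $Y$, which is the adjoint of $\id Y$. Similarly, the condition that $f$ respects the composition law $\circ_i\colon\mathcal{O}(p)\otimes\mathcal{O}(q)\to\mathcal{O}(p+q-1)$ must be shown equivalent to the associativity-type relations among the $\nu_n$. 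This is the step that requires care: one must unwind the definition of the composition laws of the endomorphism operad (the ones described in \cite[Definition 7.1]{htnso}, which on $\hom_{\C C}(Y^{\otimes p},Y)\otimes\hom_{\C C}(Y^{\otimes q},Y)\to\hom_{\C C}(Y^{\otimes(p+q-1)},Y)$ are built from the monoidal $\C V$-category structure on $\C C$, i.e.\ evaluation maps and the interchange isomorphisms $\zeta$) and check that transposing under the adjunction turns the square expressing $f(p+q-1)\circ(\circ_i)=(\circ_i)^{\mathtt{End}}\circ(f(p)\otimes f(q))$ into the square expressing the compatibility of $\nu_{p+q-1}$ with $\nu_p$, $\nu_q$ and $\circ_i$ of $\mathcal O$.

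The main obstacle is precisely this diagram chase: matching the two bracketings of the structure morphisms against the composition laws of the endomorphism operad, keeping track of the coherence isomorphisms ($\zeta$, the multiplication isomorphism $z(X)\otimes z(X')\cong z(X\otimes X')$, and associativity of $\otimes_{\C C}$) that intervene when one passes from $z(\mathcal{O}(p)\otimes\mathcal{O}(q))$ to $z(\mathcal{O}(p))\otimes z(\mathcal{O}(q))$ and reorders tensor factors of $Y$. However, this is bookkeeping rather than a conceptual difficulty; both sides are governed by the same monoidal $\C V$-category structure on $\C C$, so the identification is forced once one adopts consistent notation. Finally I would note that the two assignments are mutually inverse, since the adjunction bijection is, and that naturality in $Y$ (though not asserted in the statement) follows formally, completing the proof. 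I will cite \cite[Definition 7.1]{htnso} for the precise formulas and remark that in the concrete cases of Remark \ref{examples2} and Remark \ref{examples3} the bijection sends an operation $x\in\mathcal O(n)$ to the multilinear map $(a_1,\dots,a_n)\mapsto x(a_1,\dots,a_n)$, which makes the statement transparent.
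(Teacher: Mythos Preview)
Your proposal is correct and takes essentially the same approach as the paper: the paper's proof is a single sentence stating that the adjoint of $\mathcal{O}(n)\to\hom_{\C C}(Y^{\otimes n},Y)$ is the structure morphism $\nu_n$, leaving the compatibility check implicit. You have simply spelled out in detail the diagram chase that the paper elides.
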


\begin{proof}
The adjoint of the morphism $\mathcal{O}(n)\rightarrow \mathtt{End}_{\C C}(Y)(n)=\hom_{\C{C}}(Y^{\otimes n},Y)$ is the structure morphism $\nu_n\colon z(\mathcal{O}(n))\otimes Y^{\otimes n}\r Y$.
\end{proof}

Diagrams of algebras over an operad $\mathcal O$ can also be descried as $\mathcal O$-algebras in a category of diagrams.


Given a small category $I$, the category of \emph{$I$-shaped diagrams in $\C C$} is the category $\C{C}^I$ of  functors $I\r \C{C}$ and natural transformations between them. This category inherits  from $\C C$  a model $\C V$-algebra structure. A morphism $Y\to Y'$ between diagrams $Y,Y'\colon I\r\C C$ is a fibration (resp.~weak equivalence) if $Y(i)\to Y'(i)$ is a fibration (resp.~weak equivalence) for all objects $i$ in $I$. The tensor product of $Y$ and $Y'$  is defined as $$(Y\otimes Y')(i)=Y(i)\otimes Y'(i),\qquad i\in I,$$ 
and the tensor unit of $\C{C}^I$ is the constant diagram $\unit_{\C{C}^I}(i)=\unit_{\C{C}}$.  The rest of the structure is given by 
\begin{align*}
 z^I\colon&\C{V}\To Z(\C{C}^I), &z^I(X)(i)&=z(X),&\zeta^I(X,Y)(i)&=\zeta(X,Y(i)).
\end{align*}
The right adjoint of $z^I(-)\otimes Y\colon\C{V}\r\C{C}^I$ is the functor
$$\hom_{\C{C}^I}(Y,-)\colon\C{C}^I\To \C{V}$$
defined by the following end in $\C V$ \cite[IX.5]{cwm2},
$$\hom_{\C{C}^I}(Y,Y')=\int_{i\in I}\hom_{\C{C}}(Y(i),Y'(i)).$$
The next result follows readily from the previous lemma and the universal property of an end.

\begin{cor}\label{tonteria}
For any operad $\mathcal{O}$ in $\C V$, any small category~$I$, and any diagram  $Y\colon I\r\C{C}$, 
there is a bijection between the morphisms $\mathcal{O}\r \mathtt{End}_{\C{C}^I}(Y)$ in $\operad{\C V}$ and the collections of  $\mathcal{O}$-algebra structures on the objects $Y(i)$ in $\C{C}$, $i\in I$, such that the morphisms in the diagram are $\mathcal{O}$-algebra morphisms.
\end{cor}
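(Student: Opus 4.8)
The plan is to apply the previous lemma to the model $\C V$-algebra $\C C^I$ and then unravel the resulting data objectwise. By that lemma, morphisms $\mathcal O\to\mathtt{End}_{\C C^I}(Y)$ in $\operad{\C V}$ correspond bijectively to $\mathcal O$-algebra structures on the diagram $Y$ viewed as an object of $\C C^I$, i.e.\ to families of structure morphisms $\nu_n\colon z^I(\mathcal O(n))\otimes Y^{\otimes n}\to Y$ in $\C C^I$ satisfying the associativity and unit relations of an $\mathcal O$-algebra. So it suffices to identify such structures with the collections described in the statement.

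First I would use that tensor products, the functor $z^I$, and the defining relations of an $\mathcal O$-algebra are all computed objectwise in $\C C^I$: evaluating $\nu_n$ at each $i\in I$ gives morphisms $z(\mathcal O(n))\otimes Y(i)^{\otimes n}\to Y(i)$ in $\C C$, and the $\mathcal O$-algebra relations hold for the family $\nu_\bullet$ in $\C C^I$ if and only if they hold for $\nu_\bullet(i)$ in $\C C$ for every $i$. Hence the $\nu_\bullet(i)$ are exactly $\mathcal O$-algebra structures on the objects $Y(i)$. Moreover, a family of morphisms $\nu_n(i)\colon z(\mathcal O(n))\otimes Y(i)^{\otimes n}\to Y(i)$ assembles into a morphism $\nu_n$ in $\C C^I$, i.e.\ a natural transformation, precisely when the identity $Y(\alpha)\circ\nu_n(i)=\nu_n(j)\circ\bigl(z(\mathcal O(n))\otimes Y(\alpha)^{\otimes n}\bigr)$ holds for every $\alpha\colon i\to j$ in $I$ and every $n$, which is exactly the condition that each $Y(\alpha)\colon Y(i)\to Y(j)$ is a morphism of $\mathcal O$-algebras. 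This gives the bijection.

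Alternatively, and closer to the hint preceding the statement, one argues directly with the end formula $\mathtt{End}_{\C C^I}(Y)(n)=\hom_{\C C^I}(Y^{\otimes n},Y)=\int_{i\in I}\hom_{\C C}(Y(i)^{\otimes n},Y(i))=\int_{i\in I}\mathtt{End}_{\C C}(Y(i))(n)$: by the universal property of the end, a map $\mathcal O(n)\to\mathtt{End}_{\C C^I}(Y)(n)$ is the same as a wedge, that is, a family of maps $\mathcal O(n)\to\mathtt{End}_{\C C}(Y(i))(n)$ dinatural in $i$, and transposing along the relevant adjunction turns dinaturality into the requirement that each transition map $Y(\alpha)$ respects the structure morphisms. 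Since the identity and composition laws of $\mathtt{End}_{\C C^I}(Y)$ are induced through the end by those of the $\mathtt{End}_{\C C}(Y(i))$, such a family is a morphism of operads if and only if each $\mathcal O\to\mathtt{End}_{\C C}(Y(i))$ is, and applying the previous lemma at every $i$ concludes the argument. The only point requiring care — the main, if minor, obstacle — is the bookkeeping that shows the dinaturality (equivalently, naturality) condition is exactly the statement that the $Y(\alpha)$ are $\mathcal O$-algebra morphisms, and that it is compatible with the operad structure; everything else is a routine objectwise check.
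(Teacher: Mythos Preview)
Your proposal is correct and follows essentially the same route as the paper, which simply states that the result ``follows readily from the previous lemma and the universal property of an end.'' Your second argument is precisely this hint spelled out, and your first argument (applying the previous lemma directly to the model $\C V$-algebra $\C C^I$ and evaluating objectwise) is an equivalent unpacking of the same idea.
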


\begin{defn}
A \emph{$\C V$-algebra functor} is a lax monoidal functor between $\C V$-algebras $F\colon\C C\r\C D$ equipped with a monoidal natural isomorphism $Fz_{\C C}(X)\cong z_{\C D}(X)$ such that the diagram in \cite[Definition 4.1.11]{hmc} commutes (after replacing Hovey's $i$ with our $z$, which is just a matter of notation).
\end{defn}

\begin{rem}
A $\C V$-algebra functor $F\colon\C C\r\C D$ is $\C V$-enriched. For any two objects $Y$ and $Z$ of $\C C$, the morphism $F(Y,Z)\colon \hom_{\C C}(Y,Z)\r\hom_{\C D}(F(Y),F(Z))$ in $\C V$ is the adjoint of
$$\xymatrix{z_{\C D}(\hom_{\C C}(Y,Z))\otimes_{\C D}F(Y)\ar[d]^\cong\\
Fz_{\C C}(\hom_{\C C}(Y,Z))\otimes_{\C D}F(Y)\ar[d]^{\text{mult.}}\\
 F(z_{\C C}(\hom_{\C C}(Y,Z))\otimes_{\C C}Y)\ar[d]^{F(\text{evaluation})}\\
 F(Z).}$$
\end{rem}

A functor $F\colon J\r I$ between small categories induces by precomposition a strong monoidal $\C V$-algebra functor 
\begin{equation*}
F^*\colon \C{C}^I\To \C{C}^J, \qquad F^*(Y)=Y F.
\end{equation*}
The morphisms between morphism objects in $\C V$,
$$F^*(Y,Z)\colon\hom_{\C{C}^I}(Y,Z)\To \hom_{\C{C}^J}(YF,ZF),$$
are defined by the universal property of an end. These morphisms give rise to morphisms in $\operad{\C V}$ between endomorphism operads,
\begin{equation}\label{puba}
F^*\colon \mathtt{End}_{\C{C}^I}(Y)\To \mathtt{End}_{\C{C}^J}(Y F).
\end{equation}

Denote $\Delta$ the \emph{simplex category}, whose objects are the finite ordinals $$\deltan{n}=\{0<\cdots<n\}, \qquad n\geq 0,$$ and morphisms are non-decreasing maps. These ordinals are regarded as categories with morphisms going upwards $i\r j$, $i\leq j$. The category $\Delta$ is generated by the \emph{coface} and \emph{codegeneracy} maps,
$$d^{i}\colon \deltan{n-1}\To \deltan{n},\qquad s^{i}\colon \deltan{n+1}\To \deltan{n},\qquad 0\leq i\leq n,$$
which satisfy the duals of the usual simplicial relations. As usual, we denote $(d^{i})^{*}=d_{i}$ and $(s^{i})^{*}=s_{i}$.

A morphism in $\C C$ is the same as a functor $\deltan{1}\r\C C$. Applying the previous corollary to $I=\deltan{1}$, we deduce  the following characterization of $\mathcal{O}$-algebra morphisms.

\begin{cor}\label{morfismos}
Given an operad $\mathcal{O}$ in $\C V$ and two $\mathcal{O}$-algebras in $\C C$, $X$ and $Y$, defined by morphisms $f_X\colon\mathcal{O}\r \mathtt{End}_{\C C}(X)$ and $f_Y\colon\mathcal{O}\r \mathtt{End}_{\C C}(Y)$ in $\operad{\C V}$, there is a bijection between the morphisms $g\colon X\r Y$ in $\algebra{\mathcal{O}}{\C C}$ 
and the morphisms $g\colon X\r Y$ in ${\C C}$ such that there exists a morphism $f_g\colon\mathcal{O}\r \mathtt{End}_{\C{C}^{\deltan{1}}}(g)$ making the following diagram commutative 
$$\xymatrix{&\mathcal{O}
\ar[ld]_-{f_X}
\ar[d]^-{f_g}
\ar[rd]^-{f_Y}&\\
\mathtt{End}_{\C C}(X)&\mathtt{End}_{\C{C}^{\deltan{1}}}(g)\ar[l]_-{d_{1}}\ar[r]^-{d_{0}}&\mathtt{End}_{\C C}(Y).}$$
\end{cor}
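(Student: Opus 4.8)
The plan is to identify both sides with data that the earlier corollaries already organize, and then trace through the adjunctions. By Corollary \ref{tonteria} applied to $I = \deltan{1}$, a morphism $\mathcal{O} \r \mathtt{End}_{\C{C}^{\deltan{1}}}(g)$ is the same as a collection of $\mathcal{O}$-algebra structures on the two objects $g(0) = X$ and $g(1) = Y$ in $\C C$ such that the single nonidentity morphism of the diagram, namely $g\colon X \r Y$, is an $\mathcal{O}$-algebra morphism for those structures. So such a morphism $f_g$ exists together with the commutativity of the triangle precisely when the fixed structures $f_X$ on $X$ and $f_Y$ on $Y$ are the ones appearing in that collection and $g$ is a morphism between them. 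Thus the content to verify is that the two composites $\mathcal{O} \r \mathtt{End}_{\C{C}^{\deltan{1}}}(g) \xrightarrow{d_1} \mathtt{End}_{\C C}(X)$ and $\mathcal{O} \r \mathtt{End}_{\C{C}^{\deltan{1}}}(g) \xrightarrow{d_0} \mathtt{End}_{\C C}(Y)$ are exactly the evaluation-at-$0$ and evaluation-at-$1$ functors $\deltan{1} \r \C C$ precomposed as in \eqref{puba}, i.e. that $d_i = (d^i)^*$ on endomorphism operads corresponds under Corollary \ref{tonteria} to restricting a diagram-indexed family of algebra structures to its value at the object $i \in \deltan{1}$.

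First I would record the elementary observation that the two objects $\mathbf{0} \r \deltan{1}$ picking out $0$ and $1$ are, as functors, exactly $d^1$ and $d^0$ respectively (the coface $d^i$ omits $i$, so $d^1\colon \mathbf{0} \r \deltan{1}$ hits $0$ and $d^0$ hits $1$), which is why the maps in the statement are labelled $d_1$ and $d_0$. Then I would apply \eqref{puba} with $F = d^i\colon \mathbf{0}\r\deltan{1}$ to get $(d^i)^* = d_i\colon \mathtt{End}_{\C{C}^{\deltan{1}}}(g) \r \mathtt{End}_{\C{C}^{\mathbf{0}}}(g d^i) = \mathtt{End}_{\C C}(g(i))$, using $\C{C}^{\mathbf 0} = \C C$. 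The key compatibility is that the square
$$\xymatrix{
\mathcal{O}\text{-algebra structures on } Y\colon \deltan{1}\r\C C \ar[r]^-{\text{Cor.\ \ref{tonteria}}}_-{\cong} \ar[d] & \hom_{\operad{\C V}}(\mathcal{O}, \mathtt{End}_{\C{C}^{\deltan{1}}}(Y)) \ar[d]^{(d^i)_*}\\
\mathcal{O}\text{-algebra structures on } Y(i)\in\C C \ar[r]^-{\text{Lemma}}_-{\cong} & \hom_{\operad{\C V}}(\mathcal{O}, \mathtt{End}_{\C C}(Y(i)))
}$$
commutes, where the left vertical map reads off the structure morphisms at the object $i$. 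This is immediate from the construction of $F^*$ on morphism objects via the universal property of the end $\hom_{\C{C}^I}(Y,Y') = \int_{i} \hom_{\C C}(Y(i),Y'(i))$: the projection to the $i$-th factor is precisely the enriched version of evaluation at $i$, and it is compatible with the operad structure maps by the way composition laws on endomorphism operads are defined componentwise. Chasing this square with $I = \deltan{1}$ shows that $d_1 \circ f_g = f_X$ forces $f_g$ to restrict at $0$ to the given $f_X$, and similarly $d_0 \circ f_g = f_Y$ at $1$.

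Assembling: given $g\colon X\r Y$ in $\C C$, the existence of $f_g$ with the triangle commuting is, by Corollary \ref{tonteria} and the compatibility square, equivalent to the pair $(f_X, f_Y)$ extending to an $\mathcal O$-algebra structure on the diagram $g$, which by the final clause of Corollary \ref{tonteria} is equivalent to $g$ being an $\mathcal O$-algebra morphism from $(X, f_X)$ to $(Y, f_Y)$, i.e. to $g$ defining a morphism in $\algebra{\mathcal O}{\C C}$; and when such $f_g$ exists it is unique, since an $\mathcal{O}$-algebra structure on a diagram is determined by its values on objects, so the correspondence $g \leftrightarrow (g, f_g)$ is a bijection as claimed. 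The main obstacle is purely bookkeeping: making the naturality square above precise requires unwinding the definition of $F^*$ on morphism objects and on endomorphism operads through two layers of adjunction (the tensored $\C V$-enrichment and the operad structure maps), and checking that the projection $\int_i \hom_{\C C}(Y(i),Y'(i)) \r \hom_{\C C}(Y(j),Y'(j))$ intertwines the operad composition laws; but no genuinely new idea is needed beyond what is already set up before the statement.
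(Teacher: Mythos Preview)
Your proof is correct and follows the same approach as the paper, which simply says the result is obtained by applying Corollary~\ref{tonteria} to $I=\deltan{1}$ and then notes the uniqueness of $f_g$ via the pull-back square~\eqref{pb}; you have spelled out in more detail the compatibility between the end projections and the operad maps $d_0,d_1$, which the paper leaves implicit. One small slip: where you write $\mathtt{End}_{\C C}(g(i))$ and $Y(i)$ in the compatibility square, you mean $g(d^i(0))$ rather than $g(i)$, since $d^i$ omits $i$ (so $d_1$ lands at $X=g(0)$ and $d_0$ at $Y=g(1)$), but your surrounding prose makes clear you have the indexing right conceptually.
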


In this corollary, the morphism $f_g$ is unique provided it exists since, by definition of end, we have pull-back diagrams as follows, $n\geq 0$,
\begin{equation}\label{pb}
\xy 
(-12,0)*{\mathtt{End}_{\C{C}}(X)(n)=\hom_{\C C}(X^{\otimes n},X)},
(40,0)*{\mathtt{End}_{\C{C}^{\deltan{1}}}(g)(n)},
(0,-15)*{\hom_{\C C}(X^{\otimes n},Y)},
(54,-15)*{\hom_{\C C}(Y^{\otimes n},Y)=\mathtt{End}_{\C{C}}(Y)(n)},
(20,-8)*{\scriptstyle\text{pull}},
\ar@{<-}@/^12pt/(-23,3);(35,3)^{d_1}
\ar@{<-}(13,0);(29,0)
\ar(0,-3);(0,-12)_{\hom_{\C C}(X^{\otimes n},g)}
\ar(40,-3);(40,-12)
\ar@/^5pt/(50,-1);(65,-12)^{d_0}
\ar@{<-}(13,-15);(29,-15)_{\hom_{\C C}(g^{\otimes n},Y)}
\endxy
\end{equation}

\section{Homotopy invariance of endomorphism operads}\label{endinv}

Let $\C V$ be a symmetric monoidal model category and $\C C$ a model $\C V$-algebra. In this section we establish the homotopy invariance properties in $\operad{\C V}$ of endomorphism operads of objects in $\C C$. This is a standard way of transferring algebra structures along weak equivalences in $\C C$, compare \cite[Theorem 3.5]{ahto}, and it will have further applications in Section \ref{mapping}.

\begin{lem}\label{ultimo}
Given a cofibrant object $Y$  in $\C C$, the adjoint pair 
$$\xymatrix@C=50pt{{\C V}\ar@<.5ex>[r]^-{z(-)\otimes Y}& {\C C}\ar@<.5ex>[l]^-{\hom_{\C C}(Y,-)}}$$
is a Quillen pair.
\end{lem}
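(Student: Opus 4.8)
The plan is to show that the right adjoint $\hom_{\C C}(Y,-)$ preserves fibrations and trivial fibrations, which is equivalent to showing that the left adjoint $z(-)\otimes Y$ preserves cofibrations and trivial cofibrations. Since both functors are colimit-preserving (being left adjoints themselves, $z$ being left Quillen and $-\otimes Y$ being part of the monoidal structure), it suffices to check these conditions on the generating (trivial) cofibrations of $\C V$.

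First I would recall that $\C V$ is cofibrantly generated, with generating cofibrations $I$ and generating trivial cofibrations $J$. For $i\colon A\to B$ in $I$, I want $z(i)\otimes Y\colon z(A)\otimes Y\to z(B)\otimes Y$ to be a cofibration in $\C C$. The key observation is that $z$ is left Quillen, so $z(i)$ is a cofibration in $\C C$; thus $z(i)\colon z(A)\to z(B)$ is a cofibration and $Y$ is cofibrant, so I can invoke the push-out product axiom for the monoidal model category $\C C$ applied to $z(i)$ and the map $\varnothing\to Y$ from the initial object. The push-out product of a cofibration with a cofibration is a cofibration, and the push-out product of $z(i)$ with $\varnothing\to Y$ is precisely $z(i)\otimes Y$ (the relevant push-out corner is trivial because the domain corner involves $z(A)\otimes\varnothing = \varnothing$, using that $-\otimes\varnothing$ is initial as $-\otimes-$ preserves colimits in each variable). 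Hence $z(i)\otimes Y$ is a cofibration. The same argument with $j\in J$ in place of $i\in I$, using that the push-out product of a trivial cofibration with a cofibration is a trivial cofibration, shows $z(j)\otimes Y$ is a trivial cofibration.

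Since $z(-)\otimes Y$ preserves all colimits, it sends relative $I$-cell complexes to relative $I$-cell complexes and relative $J$-cell complexes to relative $J$-cell complexes, and it sends retracts to retracts; therefore it preserves all cofibrations and all trivial cofibrations. This is exactly the statement that the adjoint pair is a Quillen pair.

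The only genuinely delicate point — and the one I would be most careful about — is the identification of $z(i)\otimes Y$ with the push-out product $z(i)\,\square\,(\varnothing\to Y)$, which requires that the initial object is absorbing for $\otimes_{\C C}$, i.e. $X\otimes_{\C C}\varnothing\cong\varnothing$. This holds because $\C C$ is biclosed monoidal, so $X\otimes_{\C C}-$ is a left adjoint and preserves the initial object. Everything else is formal once the push-out product axiom (which is part of Definition \ref{nota1}) is in hand; note in particular that we do not need the unit axiom or any of the extra axioms here, only that $\C C$ is a monoidal model category and $z$ is left Quillen.
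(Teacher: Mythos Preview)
Your proof is correct and follows the same idea as the paper: since $z$ is left Quillen, $z(f)$ is a (trivial) cofibration for any (trivial) cofibration $f$, and then the push-out product of $z(f)$ with the cofibration $\varnothing\to Y$ is just $z(f)\otimes Y$, which is therefore a (trivial) cofibration by the push-out product axiom. The paper's proof is a one-liner because the reduction to generating (trivial) cofibrations is unnecessary---the argument you give for $i\in I$ and $j\in J$ works verbatim for an arbitrary (trivial) cofibration in $\C V$, so the cell-complex/retract bookkeeping can be dropped.
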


\begin{proof}
Since $z$ is a left Quillen functor and $Y$ is cofibrant then $z(-)\otimes Y$ is a left Quillen functor by the push-out product axiom.
\end{proof}

\begin{cor}\label{esfibrante}
If $Y$ is a fibrant-cofibrant object in $\C C$ then $\mathtt{End}_{\C C}(Y)$ is a fibrant operad in $\operad{\C V}$.
\end{cor}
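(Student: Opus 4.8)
The plan is to show that $\mathtt{End}_{\C C}(Y)$ satisfies the fibrancy criterion for operads in $\operad{\C V}$, namely that each arity component $\mathtt{End}_{\C C}(Y)(n) = \hom_{\C C}(Y^{\otimes n}, Y)$ is a fibrant object of $\C V$. By the definition of the model structure on $\operad{\C V}$ recalled in Definition \ref{defop}, an operad is fibrant precisely when all its components are fibrant, so this is what has to be established.

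First I would observe that since $Y$ is cofibrant and the tensor product is a left Quillen bifunctor (push-out product axiom), the iterated tensor power $Y^{\otimes n}$ is again cofibrant for every $n\geq 0$; for $n=0$ this is the tensor unit $\unit_{\C C}$, which is cofibrant because $z$ is left Quillen and $\unit_{\C V}$ is cofibrant (or, more precisely, one uses the strong unit axiom / the assumption that $\C C$ is a model $\C V$-algebra to handle the unit). Then Lemma \ref{ultimo}, applied to the cofibrant object $Y^{\otimes n}$ in place of $Y$, shows that the adjoint pair
$$\xymatrix@C=50pt{{\C V}\ar@<.5ex>[r]^-{z(-)\otimes Y^{\otimes n}}& {\C C}\ar@<.5ex>[l]^-{\hom_{\C C}(Y^{\otimes n},-)}}$$
is a Quillen pair, so the right adjoint $\hom_{\C C}(Y^{\otimes n},-)$ is a right Quillen functor and in particular preserves fibrant objects.

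Since $Y$ is also fibrant by hypothesis, it follows that $\hom_{\C C}(Y^{\otimes n}, Y) = \mathtt{End}_{\C C}(Y)(n)$ is fibrant in $\C V$ for every $n \geq 0$. As this holds for all arities, $\mathtt{End}_{\C C}(Y)$ is a fibrant operad, which is the claim.

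The only genuine subtlety — and the step I would be most careful about — is the cofibrancy of the tensor powers $Y^{\otimes n}$, especially the degenerate case $n = 0$ where $Y^{\otimes 0} = \unit_{\C C}$: here one needs the tensor unit of $\C C$ to be cofibrant, which is not automatic but is guaranteed in the present setting by the strong unit axiom and the $\mathbb I$-cofibrant axiom built into Definition \ref{nota1} and Definition \ref{nota2} (the $\mathbb I$-cofibrant axiom ensures $z$ sends the relevant objects, including $\unit_{\C V}$, to cofibrant objects, and the unit of the $\C V$-algebra identifies $\unit_{\C C}$ with $z(\unit_{\C V})$ up to isomorphism). For $n\geq 1$ the argument is the routine induction using the push-out product axiom. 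Everything else is a direct application of Lemma \ref{ultimo} arity by arity.
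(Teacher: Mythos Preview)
Your argument for arities $n\geq 1$ is correct and matches the paper's proof exactly: the push-out product axiom makes $Y^{\otimes n}$ cofibrant, and Lemma~\ref{ultimo} then gives that $\hom_{\C C}(Y^{\otimes n},-)$ is right Quillen, so it sends the fibrant object $Y$ to a fibrant object of $\C V$.

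The gap is in arity $0$. You try to apply Lemma~\ref{ultimo} with $Y^{\otimes 0}=\unit_{\C C}$, which requires $\unit_{\C C}$ to be cofibrant. But this paper explicitly does \emph{not} assume the tensor unit is cofibrant: see the appendix and the remark in the introduction that ``in some HAG contexts of interest the tensor unit is not cofibrant, e.g.~in brave new algebraic geometry''. Neither the strong unit axiom nor the $\unit$-cofibrant axiom forces $\unit_{\C C}$ (or $\unit_{\C V}$) to be cofibrant; the strong unit axiom only says that tensoring a cofibrant replacement of $\unit$ into pseudo-cofibrant objects is a weak equivalence, and $\unit$-cofibrant objects are by definition those admitting a cofibration \emph{from} $\unit$, not cofibrant objects. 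So your justification for the $n=0$ case does not go through.

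The paper's fix is short and avoids the issue entirely: observe that $z(-)\otimes_{\C C}\unit_{\C C}\cong z(-)$, so $\hom_{\C C}(\unit_{\C C},-)$ is the right adjoint of $z$ itself. Since $z$ is a left Quillen functor by the definition of model $\C V$-algebra, its right adjoint is right Quillen and therefore preserves fibrant objects. This handles arity $0$ without ever asking whether $\unit_{\C C}$ is cofibrant.
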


\begin{proof}
The tensor powers $Y^{\otimes n}$ are cofibrant for $n\geq 1$ by the push-out product axiom. Hence, by the previous lemma, the objects $\mathtt{End}_{\C C}(Y)(n)=\hom_{\C C}(Y^{\otimes n},Y)$ are fibrant in $\C V$ since $Y$ is fibrant and $\hom_{\C C}(Y^{\otimes n},-)$ is a right Quillen functor. Similarly, $\mathtt{End}_{\C C}(Y)(0)=\hom_{\C C}(\unit,Y)$ is fibrant since $Y$ is fibrant and $\hom_{\C C}(\unit,-)$ is a right Quillen functor. It is actually the right adjoint of the left Quillen functor~$z$.
\end{proof}

\begin{lem}\label{comoma}
If $f\colon X\r Y$ is a cofibration (resp.~trivial cofibration) 
and $Z$ is a fibrant object in $\C C$, then the induced morphism
$$\hom_{\C C}(f,Z)\colon \hom_{\C C}(Y,Z)\To \hom_{\C C}(X,Z)$$
is a fibration (resp.~trivial fibration).
\end{lem}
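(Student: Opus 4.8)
The plan is to reduce the statement about the internal hom-object $\hom_{\C C}(f,Z)$ to a lifting problem in $\C V$ that can be checked against the generating (trivial) cofibrations, using the adjunction $z(-)\otimes(-)\dashv \hom_{\C C}(-,-)$ that defines the $\C V$-enrichment of $\C C$. Concretely, a morphism $A\r\hom_{\C C}(Y,Z)$ in $\C V$ is adjoint to a morphism $z(A)\otimes Y\r Z$ in $\C C$, and the square testing whether $\hom_{\C C}(f,Z)$ has the right lifting property against a map $j\colon A\r B$ in $\C V$ transposes, via this adjunction, into a square testing whether $Z\r *$ (the map to the terminal object) has the right lifting property against the pushout-product map $z(j)\square z(f)$ in $\C C$, i.e.\ against
\[
z(A)\otimes Y \cup_{z(A)\otimes X} z(B)\otimes X \To z(B)\otimes Y.
\]

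First I would make the transposition precise: since $z$ is a left Quillen functor, the push-out product axiom in $\C C$ applied to $z(j)$ and $f$ shows that if $j$ is a cofibration in $\C V$ and $f$ is a cofibration in $\C C$, then $z(j)\square z(f)$ is a cofibration in $\C C$, and it is trivial as soon as either $j$ or $f$ is. Then I would take $j$ to range over the generating trivial cofibrations of $\C V$: in that case $z(j)\square f$ is a trivial cofibration in $\C C$, so it lifts against the fibration $Z\r *$ (using that $Z$ is fibrant); transposing back, this says exactly that $\hom_{\C C}(f,Z)$ has the right lifting property against all generating trivial cofibrations of $\C V$, hence is a fibration in $\C V$. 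For the ``resp.'' case, when $f$ is moreover a \emph{trivial} cofibration in $\C C$, the map $z(j)\square f$ is already a trivial cofibration for $j$ any generating cofibration of $\C V$, and the same lifting-and-transposing argument shows $\hom_{\C C}(f,Z)$ has the right lifting property against all generating cofibrations, hence is a trivial fibration.

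The main obstacle is bookkeeping rather than conceptual: one has to be careful that the enrichment $\hom_{\C C}(-,-)$ really is the two-variable adjoint of $z(-)\otimes(-)$ in the appropriate variance (the relevant adjunctions are recorded in the Remark following Definition~\ref{nota2} and in Lemma~\ref{ultimo}), and that the pushout-product in the enriched sense matches the pushout-product in $\C C$ after applying $z$, which uses the strong monoidality of $z$ and the central structure isomorphisms $\zeta$. One subtlety worth flagging is the behaviour at arity issues is irrelevant here (we are in $\C C$, not dealing with operads yet), but the fibrancy of $Z$ is used precisely once, to solve the lifting problem against a trivial cofibration; no cofibrancy hypothesis on $X$ or $Y$ is needed because the pushout-product axiom does not require it. With these identifications in hand the proof is a one-line transposition argument, so I would present it compactly, citing the push-out product axiom of Definition~\ref{nota1} and the adjunction, without re-deriving the enriched pushout-product formula.
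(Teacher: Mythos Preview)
Your proposal is correct and follows essentially the same approach as the paper: both transpose the lifting problem in $\C V$ via the adjunction $z(-)\otimes(-)\dashv\hom_{\C C}(-,-)$ to a lifting problem in $\C C$ against the push-out product $z(j)\odot f$, invoke the push-out product axiom to see this is a trivial cofibration, and lift against $Z\r *$ using fibrancy of $Z$. One small notational slip: you write $z(j)\square z(f)$ once where you mean $z(j)\square f$ (as your displayed formula and later references confirm; $f$ is already a map in $\C C$).
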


\begin{proof}
Consider a commutative square of solid arrows in $\C{V}$
\begin{equation}\label{acomoma}
\xymatrix{
U\ar[d]_g\ar[r]^-h&\ar[d]^{\hom_{\C C}(f,Z)} \hom_{\C C}(Y,Z)\\
V\ar[r]_-k\ar@{-->}[ru]^-{l}&\hom_{\C C}(X,Z)
}
\end{equation}
where $g$ is a trivial cofibration (resp.~cofibration). 
We must construct a diagonal morphism $l$ 
such that the two triangles commute.

The solid diagram \eqref{acomoma} is the same as a commutative square in $\C{C}$
$$\xymatrix@C=30pt{
z(U)\otimes X\ar[d]_{z(g)\otimes X}\ar[r]^-{z(U)\otimes f}&\ar[d]^{\bar{h}} z(U)\otimes Y\\
z(V)\otimes X\ar[r]_-{\bar k}&Z}$$
where $\bar h$ and $\bar k$ are the adjoints of $h$ and $k$, respectively. 
We will consider the induced morphism from the push-out of the left upper corner to $Z$,
$$\xymatrix@C=30pt{
z(U)\otimes X\ar[d]_{z(g)\otimes X}\ar[r]^-{z(U)\otimes f}\ar@{}[rd]|{\text{push}}&\ar@/^15pt/[ddr]^{\bar{h}} z(U)\otimes Y\ar[d]^-{\bar g}\\
z(V)\otimes X\ar@/_15pt/[rrd]_-{\bar k}\ar[r]_-{\bar f}&W\ar[rd]^-{r}\\
&&Z}$$
Since $z$ is a left Quillen functor, $z(g)$ is a trivial cofibration (resp.~cofibration), hence the push-out product $z(g)\odot f\colon W \r z(V)\otimes Y$ of $z(g)$ and $f$, see \cite[\S4]{htnso}, is a trivial cofibration by the push-out product axiom. Since $Z$ is fibrant, there exists a morphism $\bar{l}\colon z(V)\otimes Y\r Z$ fitting into a commutative diagram
$$\xymatrix{W\ar@{ >->}[d]_{z(g)\odot f}^\sim\ar[r]^-{r}&Z\ar@{->>}[d]\\z(V)\otimes Y\ar[ru]_-{\bar l}\ar[r]&{*}}$$
Here $*$ denotes the final object.
We can take $l\colon V\r \hom_{\C C}(Y,Z)$ to be the adjoint of $\bar l$. 
\end{proof}

The following result is a consequence of the previous lemma and Ken Brown's lemma.

\begin{cor}\label{belcor}
For any fibrant object $Z$ in $\C C$, the functor $\hom_{\C C}(-,Z)\colon\C{C}^{\op}\r\C{V}$ takes weak equivalences between cofibrant objects in $\C C$ to weak equivalences in $\C V$.
\end{cor}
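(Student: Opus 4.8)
The plan is to reduce the statement to Lemma \ref{comoma} together with Ken Brown's lemma in the form that a functor sending trivial cofibrations between cofibrant objects to weak equivalences automatically sends all weak equivalences between cofibrant objects to weak equivalences. First I would recall the precise statement of Ken Brown's lemma: if $\C M$ is a model category, $\C N$ a category with a subcategory of weak equivalences satisfying two-out-of-three, and $G\colon \C M\r\C N$ a functor carrying trivial cofibrations between cofibrant objects to weak equivalences, then $G$ carries all weak equivalences between cofibrant objects to weak equivalences. We apply this with $\C M=\C C^{\op}$, so that cofibrant objects of $\C M$ are fibrant objects of $\C C$ — wait, that is not what we want; instead we apply it with $\C M = \C C$ and $\C N=\C V^{\op}$, so that $G=\hom_{\C C}(-,Z)\colon\C C\r\C V^{\op}$ must carry trivial cofibrations between cofibrant objects in $\C C$ to weak equivalences in $\C V^{\op}$, i.e.\ to weak equivalences in $\C V$.

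Next I would invoke Lemma \ref{comoma}: if $f\colon X\r Y$ is a trivial cofibration in $\C C$ and $Z$ is fibrant, then $\hom_{\C C}(f,Z)\colon\hom_{\C C}(Y,Z)\r\hom_{\C C}(X,Z)$ is a trivial fibration in $\C V$, hence in particular a weak equivalence. In the language of $\C V^{\op}$ this says $G=\hom_{\C C}(-,Z)$ sends trivial cofibrations in $\C C$ (between arbitrary, in particular cofibrant, objects) to weak equivalences. Here $Z$ being fibrant is exactly the hypothesis of the corollary, so this is available. Then Ken Brown's lemma applies verbatim and yields that $\hom_{\C C}(-,Z)$ sends weak equivalences between cofibrant objects in $\C C$ to weak equivalences in $\C V^{\op}$, equivalently in $\C V$, which is the assertion of Corollary \ref{belcor}.

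The one point deserving a word of care — and the closest thing to an obstacle — is the bookkeeping of opposite categories: one must be sure that a weak equivalence in $\C V^{\op}$ is the same thing as a weak equivalence in $\C V$ (true, since weak equivalences are self-dual as a class), and that a morphism between cofibrant objects of $\C C$ is what Ken Brown's lemma requires (it does not need cofibrancy in $\C V^{\op}$, only that the source of $G$ be a model category and that we restrict to weak equivalences between cofibrant objects of that model category, which is $\C C$). There is no genuine difficulty here; the proof is essentially a one-line invocation of Lemma \ref{comoma} and Ken Brown's lemma, which is precisely why the statement is phrased as a corollary. I would therefore write the proof as: apply Ken Brown's lemma to $\hom_{\C C}(-,Z)\colon\C C\r\C V^{\op}$, whose hypothesis holds by Lemma \ref{comoma} since $Z$ is fibrant.

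\begin{proof}
Apply Ken Brown's lemma \cite[Lemma 1.1.12]{hmc} to the functor $\hom_{\C C}(-,Z)\colon \C C\r\C V^{\op}$. By Lemma \ref{comoma}, since $Z$ is fibrant, this functor sends trivial cofibrations in $\C C$ (in particular, trivial cofibrations between cofibrant objects) to trivial fibrations in $\C V$, which are weak equivalences in $\C V$, hence weak equivalences in $\C V^{\op}$. Ken Brown's lemma then shows that $\hom_{\C C}(-,Z)$ sends weak equivalences between cofibrant objects in $\C C$ to weak equivalences in $\C V^{\op}$, equivalently in $\C V$.
\end{proof}
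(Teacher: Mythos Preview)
Your proof is correct and matches the paper's own argument exactly: the paper simply states that the corollary is a consequence of Lemma \ref{comoma} and Ken Brown's lemma, which is precisely what you have written out in detail.
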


An immediate consequence of the following proposition is that the endomorphism operad of a fibrant-cofibrant object in $\C C$ is an invariant of its weak homotopy type.

\begin{prop}\label{uno}
Let $f\colon X\r Y$ be a morphism in $\C C$. Consider the induced morphisms between endomorphism operads in~$\operad{\C V}$,
$$\mathtt{End}_{\C{C}}(X)\st{d_{1}}\longleftarrow\mathtt{End}_{\C{C}^{\deltan{1}}}(f)\st{d_{0}}\To \mathtt{End}_{\C{C}}(Y).$$
\begin{enumerate}
 \item If $f$ is a (trivial) fibration and $X$ is cofibrant then $d_0$ is a (trivial) fibration.
  \item If $f$ is a trivial fibration  between fibrant-cofibrant objects then $d_1$ is a weak equivalence.
 \item If $f$ is a (trivial) cofibration  between cofibrant objects and $Y$ is fibrant then $d_1$ is a (trivial) fibration.
 
  \item If $f$ is a trivial cofibration  between fibrant-cofibrant objects  then $d_0$ is a weak equivalence.
\end{enumerate}
\end{prop}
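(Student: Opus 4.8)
The plan is to prove the four statements by exploiting the pull-back squares \eqref{pb} describing $\mathtt{End}_{\C{C}^{\deltan{1}}}(f)(n)$ together with the Quillen adjunctions of Lemma \ref{ultimo} and the mapping-object lemmas \ref{comoma} and \ref{belcor}. Recall that $\mathtt{End}_{\C{C}^{\deltan{1}}}(f)(n)$ is the pull-back of $\hom_{\C C}(X^{\otimes n},Y)\leftarrow\hom_{\C C}(Y^{\otimes n},Y)$ along $\hom_{\C C}(X^{\otimes n},Y)\leftarrow\hom_{\C C}(X^{\otimes n},X)$, with $d_0$ induced by $\hom_{\C C}(X^{\otimes n},f)$-type maps and $d_1$ by $\hom_{\C C}(f^{\otimes n},Y)$-type maps. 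Since fibrations and weak equivalences of operads in $\operad{\C V}$ are detected aritywise, it suffices to check each claim on the level of arity-$n$ components of $\C V$ for every $n\geq 0$, and to handle the arity-$0$ case separately (where $X^{\otimes 0}=\unit=Y^{\otimes 0}$, so the relevant maps simplify).

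For (1), assume $f$ is a (trivial) fibration and $X$ is cofibrant. Then $X^{\otimes n}$ is cofibrant for $n\geq 1$ by the push-out product axiom, so $z(-)\otimes X^{\otimes n}$ is left Quillen by Lemma \ref{ultimo} and hence $\hom_{\C C}(X^{\otimes n},-)$ is right Quillen; therefore $\hom_{\C C}(X^{\otimes n},f)$ is a (trivial) fibration in $\C V$. The map $d_0(n)$ is the pull-back of $\hom_{\C C}(X^{\otimes n},f)$ along $\hom_{\C C}(f^{\otimes n},Y)$ in diagram \eqref{pb}, hence is itself a (trivial) fibration since fibrations and trivial fibrations are stable under base change. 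The arity-$0$ component of $d_0$ is just $\hom_{\C C}(\unit,f)$, again a (trivial) fibration. For (3), with $f$ a (trivial) cofibration between cofibrant objects and $Y$ fibrant: by iterated use of the push-out product axiom, $f^{\otimes n}\colon X^{\otimes n}\r Y^{\otimes n}$ is a (trivial) cofibration between cofibrant objects, so Lemma \ref{comoma} gives that $\hom_{\C C}(f^{\otimes n},Y)$ is a (trivial) fibration in $\C V$; then $d_1(n)$, being the pull-back of $\hom_{\C C}(f^{\otimes n},Y)$ along $\hom_{\C C}(X^{\otimes n},f)$ in \eqref{pb}, is a (trivial) fibration as well. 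In arity $0$ the map $d_1$ is an isomorphism.

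For (2) and (4) the goal is only a weak equivalence, and the strategy is to reduce to the already-proved parts plus Corollary \ref{belcor} via the two-out-of-three property. For (2), suppose $f$ is a trivial fibration between fibrant-cofibrant objects. By Corollary \ref{belcor}, the map $\hom_{\C C}(X^{\otimes n},f)\colon\hom_{\C C}(X^{\otimes n},X)\r\hom_{\C C}(X^{\otimes n},Y)$ is a weak equivalence (both $X$ and $Y$ are cofibrant and $X^{\otimes n}$ fibrant is irrelevant—what we need is that $X^{\otimes n}$ is fibrant-cofibrant so $\hom_{\C C}(-,?)$ behaves; more precisely apply Corollary \ref{belcor} to the fibrant object $X$ twice, to the weak equivalence $f^{\otimes n}$ between cofibrant objects, giving $\hom_{\C C}(f^{\otimes n},X)$ a weak equivalence), and also $\hom_{\C C}(f^{\otimes n},Y)$ is a weak equivalence. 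Now in \eqref{pb} the map $d_1(n)$ is the base change of the weak equivalence $\hom_{\C C}(f^{\otimes n},Y)$ along $\hom_{\C C}(X^{\otimes n},f)$; but by part (1) this base change diagram has $d_0(n)=\hom_{\C C}(X^{\otimes n},f)$-pull-back a trivial fibration, so the pull-back square has two parallel sides trivial fibrations, whence the remaining side $d_1(n)$ is a weak equivalence by two-out-of-three applied to the commuting square $\hom_{\C C}(f^{\otimes n},Y)\circ d_1(n)=\bigl(\text{base change of }\hom_{\C C}(X^{\otimes n},f)\bigr)\circ(\text{projection})$. Symmetrically, for (4), with $f$ a trivial cofibration between fibrant-cofibrant objects: $f^{\otimes n}$ is a trivial cofibration between cofibrant objects, so $\hom_{\C C}(f^{\otimes n},Y)$ is a trivial fibration by Lemma \ref{comoma} and $\hom_{\C C}(X^{\otimes n},f)$ is a weak equivalence by Corollary \ref{belcor} (with $X^{\otimes n}$ cofibrant and $X,Y$ fibrant; note $f$ is a weak equivalence); then $d_0(n)$ is the base change of the weak equivalence $\hom_{\C C}(X^{\otimes n},f)$ along the trivial fibration $\hom_{\C C}(f^{\otimes n},Y)$, and combining the two-out-of-three with the fact that in \eqref{pb} the projection parallel to $\hom_{\C C}(f^{\otimes n},Y)$ is a trivial fibration by part (3), we conclude $d_0(n)$ is a weak equivalence. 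The arity-$0$ components are isomorphisms in both cases.

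The main obstacle I anticipate is bookkeeping the many compatible pull-back squares \eqref{pb} correctly and making sure that when I invoke two-out-of-three in a pull-back square I am genuinely reading off a valid commuting triangle of weak equivalences—in particular that the relevant iterated tensor power $f^{\otimes n}$ really is a weak equivalence (resp. (trivial) cofibration) between cofibrant objects, which requires the push-out product axiom applied $n-1$ times and, for the weak equivalence statements, Ken Brown's lemma hidden inside Corollary \ref{belcor}. Everything else is formal stability of (trivial) fibrations and weak equivalences under base change in $\C V$ together with the fact that fibrations and weak equivalences in $\operad{\C V}$ are defined aritywise.
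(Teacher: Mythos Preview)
Your argument is correct and follows essentially the same route as the paper: analyze the pull-back square \eqref{pb} aritywise, using the push-out product axiom to control $X^{\otimes n}$ and $f^{\otimes n}$, Lemma~\ref{ultimo} for $\hom_{\C C}(X^{\otimes n},-)$, and Lemma~\ref{comoma}/Corollary~\ref{belcor} for $\hom_{\C C}(-,Y)$. Parts (1) and (3) match the paper verbatim.

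For (2) and (4) there is a small but genuine tactical difference. The paper invokes the model-categorical fact that the pull-back of a weak equivalence between fibrant objects along a fibration is again a weak equivalence. You instead feed the already-proved parts back in: in case (2), part (1) gives that $d_0(n)$ is a trivial fibration, so in the commuting square $\hom_{\C C}(X^{\otimes n},f)\circ d_1(n)=\hom_{\C C}(f^{\otimes n},Y)\circ d_0(n)$ three of the four maps are weak equivalences and two-out-of-three yields the fourth; case (4) is symmetric via part (3). This is valid and slightly more elementary, since you avoid needing that the relevant hom-objects are fibrant. Your write-up of this step is tangled, however: you initially cite Corollary~\ref{belcor} for $\hom_{\C C}(X^{\otimes n},f)$ (where Lemma~\ref{ultimo} plus Ken Brown is what applies) before correcting course in a parenthetical.

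One small slip: you assert that in arity $0$ the relevant maps ``are isomorphisms in both cases'' (2) and (4). For (2) this is right, since $d_1(0)$ is the identity on $\hom_{\C C}(\unit,X)$. But for (4) the map in question is $d_0(0)=\hom_{\C C}(\unit,f)$, which is only a weak equivalence (Ken Brown applied to the right Quillen functor $\hom_{\C C}(\unit,-)$ and the weak equivalence $f$ between fibrant objects), not an isomorphism. The conclusion is unaffected.
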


\begin{proof}
Consider the pull-back diagram \eqref{pb}. 
By the push-out product axiom, the tensor powers $X^{\otimes n}$ are cofibrant in the four cases. 
Hence, under the assumptions of (1) and (2),  $\hom_{\C C}(X^{\otimes n},f)$ is a (trivial) fibration by Lemma \ref{ultimo}. Now $(1)$ follows from the fact that  (trivial) fibrations  are closed under pull-backs. 

Under the hypotheses of $(2)$, the push-out product axiom and Ken Brown's lemma show that the tensor powers $f^{\otimes n}$ are weak equivalences between cofibrant objects hence $\hom_{\C C}(f^{\otimes n},Y)$ is a weak equivalence between fibrant objects by Lemma \ref{comoma} and Corollary \ref{belcor}. Therefore $(2)$ follows, since the pull-back of a weak equivalence between fibrant objects along a fibration is a weak equivalence.

Under the assumptions of $(3)$ and $(4)$, the tensor powers $f^{\otimes n}$ are (trivial) cofibrations between cofibrant objects by the push-out product axiom. Hence, $\hom_{\C C}(f^{\otimes n},Y)$ is a (trivial) fibration between fibrant objects by Lemma \ref{comoma}. Notice that (3) follows from the same reason as (1). Moreover, under the hypotheses of (4), $\hom_{\C C}(X^{\otimes n},f)$ is a weak equivalence between fibrant objects by Lemma \ref{ultimo} and Ken Brown's lemma, hence (4) follows from the same reason as (2).
\end{proof}

%
%

The following result follows from the usual lifting properties, Proposition \ref{uno} and Corollary \ref{morfismos}.

\begin{cor}
Let $\mathcal O$ be a cofibrant operad in $\C V$ and $f\colon X\st{\sim}\r Y$ a weak equivalence in $\C C$.
\begin{enumerate}
\item If $f$ is a trivial fibration, $X$ is cofibrant, and $Y$ is an $\mathcal O$-algebra, then there exists an $\mathcal O$-algebra structure on $X$ such that $f$ becomes a morphism of $\mathcal O$-algebras.

\item If $f$ is a trivial cofibration, $X$ is cofibrant, $Y$ is fibrant, and $X$ is an $\mathcal O$-algebra, then there exists an $\mathcal O$-algebra structure on $Y$ such that $f$ becomes a morphism of $\mathcal O$-algebras.
\end{enumerate}
\end{cor}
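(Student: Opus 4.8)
The plan is to deduce both statements from Corollary \ref{morfismos} and Proposition \ref{uno} by a lifting argument, exactly as the sentence preceding the corollary suggests. Recall that, by Corollary \ref{morfismos}, promoting $f\colon X\to Y$ to an $\mathcal O$-algebra morphism amounts to producing a lift $f_g\colon \mathcal O\to \mathtt{End}_{\C C^{\deltan 1}}(f)$ over the already-given structure map on the target (in case (1)) or the source (in case (2)), compatibly with the span $\mathtt{End}_{\C C}(X)\xleftarrow{d_1}\mathtt{End}_{\C C^{\deltan 1}}(f)\xrightarrow{d_0}\mathtt{End}_{\C C}(Y)$. The structure map on the remaining endpoint is then read off as the appropriate composite. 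So in each case I need a solution to a lifting problem in $\operad{\C V}$ of the form
\begin{equation*}
\xymatrix@C=40pt{
\varnothing\ar[r]\ar[d]&\mathtt{End}_{\C C^{\deltan 1}}(f)\ar[d]^{d_i}\\
\mathcal O\ar[r]\ar@{-->}[ru]&\mathtt{End}_{\C C}(Z),
}
\end{equation*}
where $Z$ is the endpoint carrying the given algebra structure and $d_i$ is the appropriate face map.

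For (1), $f$ is a trivial fibration, $X$ is cofibrant, $Y$ is an $\mathcal O$-algebra; I take $Z=Y$, so the map to lift against is $d_0\colon \mathtt{End}_{\C C^{\deltan 1}}(f)\to\mathtt{End}_{\C C}(Y)$. By Proposition \ref{uno}(1), $d_0$ is a trivial fibration in $\operad{\C V}$ (here I only need $f$ a trivial fibration and $X$ cofibrant, which is precisely the hypothesis). Since $\mathcal O$ is cofibrant, the map $\varnothing\to\mathcal O$ from the initial operad is a cofibration, so the dashed lift $f_g$ exists by the model category axioms. Then $d_1 f_g\colon \mathcal O\to\mathtt{End}_{\C C}(X)$ is the desired $\mathcal O$-algebra structure on $X$, and by construction of the span the diagram of Corollary \ref{morfismos} commutes, so $f$ is a morphism of $\mathcal O$-algebras.

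For (2), $f$ is a trivial cofibration, $X$ is cofibrant, $Y$ is fibrant, $X$ is an $\mathcal O$-algebra; I take $Z=X$, so the map to lift against is $d_1\colon \mathtt{End}_{\C C^{\deltan 1}}(f)\to\mathtt{End}_{\C C}(X)$. By Proposition \ref{uno}(3) (applied with the trivial-cofibration clause, using that $f$ is a trivial cofibration between cofibrant objects — note $X$ cofibrant and $f$ a cofibration force $Y$ cofibrant — and $Y$ is fibrant), $d_1$ is a trivial fibration in $\operad{\C V}$. Again $\varnothing\to\mathcal O$ is a cofibration since $\mathcal O$ is cofibrant, so a lift $f_g$ exists, and $d_0 f_g\colon\mathcal O\to\mathtt{End}_{\C C}(Y)$ furnishes the $\mathcal O$-algebra structure on $Y$ making $f$ a morphism of $\mathcal O$-algebras via Corollary \ref{morfismos}. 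I do not anticipate a genuine obstacle: the only point requiring a little care is checking that the hypotheses of the relevant clause of Proposition \ref{uno} are exactly met (in particular that $Y$ is automatically cofibrant in case (2), so that "between cofibrant objects" holds), and that the weak-equivalence conclusion about $f$ itself is not needed — only the lifting property of $d_i$ — so the statement follows formally.
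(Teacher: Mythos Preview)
Your proposal is correct and follows exactly the approach the paper indicates: the paper only states that the corollary follows from the usual lifting properties, Proposition~\ref{uno}, and Corollary~\ref{morfismos}, and you have unpacked this precisely, using Proposition~\ref{uno}(1) for part~(1) and Proposition~\ref{uno}(3) for part~(2), together with the cofibrancy of $\mathcal O$ to solve the relevant lifting problem against the trivial fibration $d_0$ or $d_1$. Your remark that $Y$ is automatically cofibrant in case~(2) is the only point needing verification, and you handle it correctly.
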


The following corollary of Proposition \ref{uno} will be very useful in our study of spaces of algebras.

\begin{cor}\label{hitech}
Let $f_{i}\colon X_{i}\r X_{i+1}$ be trivial fibrations between fibrant-cofibrant objects in $\C C$, $0\leq i\leq n$. The induced morphisms between endomorphism operads in~$\operad{\C V}$,
$$\mathtt{End}_{\C{C}^{\deltan{n}}}(X_{0}\r\cdots \r
X_{n})
\st{d_{n+1}}\longleftarrow
\mathtt{End}_{\C{C}^{\deltan{n+1}}}(X_{0}\r\cdots \r
X_{n}\r X_{n+1})\st{d_0^{n+1}}\To \mathtt{End}_{\C{C}}(X_{n+1})$$
are a weak equivalence and a trivial fibration, respectively. Moreover, these three operads are fibrant in $\operad{\C V}$.
\end{cor}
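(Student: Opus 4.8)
The plan is to reduce everything to Proposition \ref{uno} applied to the diagram category $\C C^{\deltan{n+1}}$, viewing the tower $X_{0}\r\cdots\r X_{n}\r X_{n+1}$ as an object of $\C C^{\deltan{n+1}}$ and the truncated tower $X_{0}\r\cdots\r X_{n}$ as an object of $\C C^{\deltan{n}}$. First I would recall from Corollary \ref{tonteria} and the discussion preceding it that $\C C^{\deltan{n+1}}$ and $\C C^{\deltan{n}}$ inherit model $\C V$-algebra structures in which (co)fibrations and weak equivalences are detected objectwise, and that the maps $d_{n+1}$ and $d_{0}^{n+1}$ arise from the precomposition functors along $d^{n+1}\colon\deltan{n}\r\deltan{n+1}$ and $d^{0}\circ\cdots$, i.e.\ from the maps \eqref{puba}. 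Concretely, $d_{0}^{n+1}$ is induced by the inclusion of the final vertex $\{n+1\}\hookrightarrow\deltan{n+1}$, and $d_{n+1}$ is induced by the inclusion $\deltan{n}\hookrightarrow\deltan{n+1}$ of the first $n+1$ vertices.

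Next I would observe that the tower $X_{\bullet}=(X_{0}\r\cdots\r X_{n+1})$ is a fibrant-cofibrant object of $\C C^{\deltan{n+1}}$: cofibrancy is the statement that each $X_{i}$ is cofibrant and each $X_{i}\into X_{i+1}$ is a cofibration of $\C C$ (which holds since $f_{i}$ is in particular a cofibration, being a trivial fibration — wait, a trivial fibration need not be a cofibration), so here I must be slightly more careful. The correct reading: the $f_i$ are trivial fibrations between fibrant-cofibrant objects, so the object $X_{\bullet}$ of $\C C^{\deltan{n+1}}$ is objectwise fibrant-cofibrant, but not necessarily cofibrant as a diagram. The key point is instead to factor the comparison: $d_{n+1}\colon \mathtt{End}_{\C C^{\deltan{n+1}}}(X_{\bullet})\r\mathtt{End}_{\C C^{\deltan{n}}}(X_{0}\r\cdots\r X_{n})$ should be handled by iterating the case $n=0$ of Proposition \ref{uno}, that is parts (1) and (2) of that proposition, applied to the last map $f_{n}\colon X_{n}\r X_{n+1}$. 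Part (1) of Proposition \ref{uno} gives that the map out of $\mathtt{End}_{\C C^{\deltan 1}}(f_{n})$ to $\mathtt{End}_{\C C}(X_{n+1})$ is a trivial fibration, and part (2) gives that the map to $\mathtt{End}_{\C C}(X_{n})$ is a weak equivalence; the general-$n$ statement follows by the same argument relativized over the fixed tower $X_{0}\r\cdots\r X_{n}$, using that all the relevant objects and structure maps behave well because the $f_i$ are trivial fibrations between fibrant-cofibrant objects.

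Here is the cleaner route I would actually write down. By \eqref{pb} (extended to a general shape $\deltan{m}$), $\mathtt{End}_{\C C^{\deltan{m}}}(Y_{0}\r\cdots\r Y_{m})(k)$ is the limit over $\deltan{m}$ of the objects $\hom_{\C C}(Y_{i}^{\otimes k},Y_{j})$, $i\le j$, so $\mathtt{End}_{\C C^{\deltan{n+1}}}(X_{\bullet})$ is the pull-back of $\mathtt{End}_{\C C^{\deltan{n}}}(X_{0}\r\cdots\r X_{n})$ and $\mathtt{End}_{\C C}(X_{n+1})$ over $\mathtt{End}_{\C C^{\deltan{1}}}(f_{n})$ composed with... — more simply, $\mathtt{End}_{\C C^{\deltan{n+1}}}(X_{\bullet})$ sits in a pull-back square with corners $\mathtt{End}_{\C C^{\deltan{n}}}(X_{0}\r\cdots\r X_{n})$, $\mathtt{End}_{\C C^{\deltan{1}}}(X_{n}\r X_{n+1})$, and $\mathtt{End}_{\C C}(X_{n})$, the last being the common face. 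The map $d_{0}^{n+1}$ then factors through $\mathtt{End}_{\C C^{\deltan{1}}}(f_{n})\st{d_{0}}\r\mathtt{End}_{\C C}(X_{n+1})$, which is a trivial fibration by Proposition \ref{uno}(1) (here $X_n$ is cofibrant and $f_n$ a trivial fibration), while the first projection $\mathtt{End}_{\C C^{\deltan{1}}}(f_{n})\st{d_{1}}\r\mathtt{End}_{\C C}(X_{n})$ is a trivial fibration by Proposition \ref{uno}(2). Pulling this back along $\mathtt{End}_{\C C^{\deltan n}}(X_0\r\cdots\r X_n)\r\mathtt{End}_{\C C}(X_n)$ — which is itself a fibration of fibrant operads by an iteration of Proposition \ref{uno}(1) and Corollary \ref{esfibrante} — shows $d_{n+1}$ is a trivial fibration (in particular a weak equivalence) and the composite $d_{0}^{n+1}$ is a trivial fibration, since trivial fibrations are stable under pull-back and composition. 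Finally, fibrancy of all three operads: $\mathtt{End}_{\C C}(X_{n+1})$ is fibrant by Corollary \ref{esfibrante}; the other two are fibrant because $d_{n+1}$ and $d_{0}^{n+1}$ have been shown to be (trivial) fibrations with fibrant target, and a fibration with fibrant target has fibrant source.

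The main obstacle will be bookkeeping the limit/pull-back decomposition of $\mathtt{End}_{\C C^{\deltan{n+1}}}$ correctly — checking that $\mathtt{End}_{\C C^{\deltan{n+1}}}(X_{\bullet})$ really is the asserted pull-back, that the two legs are exactly the maps $d_{n+1}$ and $d_{0}^{n+1}$ up to the identifications from \eqref{puba}, and that the iteration over $i=0,\dots,n-1$ of Proposition \ref{uno}(1) legitimately produces the fibration $\mathtt{End}_{\C C^{\deltan n}}(X_0\r\cdots\r X_n)\r\mathtt{End}_{\C C}(X_n)$ along which we pull back. Once that combinatorial identification is in hand, everything else is a formal consequence of closure of (trivial) fibrations under pull-back and composition together with Proposition \ref{uno} and Corollary \ref{esfibrante}.
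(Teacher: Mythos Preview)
Your overall strategy---an induction on $n$ using a pull-back decomposition of $\mathtt{End}_{\C C^{\deltan{n+1}}}(X_\bullet)$---is exactly what the paper does. Your decomposition as the fiber product of $\mathtt{End}_{\C C^{\deltan n}}(X_0\r\cdots\r X_n)$ and $\mathtt{End}_{\C C^{\deltan 1}}(f_n)$ over $\mathtt{End}_{\C C}(X_n)$ is correct and in fact slightly cleaner than the paper's, which pulls back over $\hom_{\C C}(X_n^{\otimes m},X_{n+1})$ and therefore has to reach back into the \emph{proof} of Proposition~\ref{uno} rather than just its statement. Your argument that $d_0^{n+1}$ is a trivial fibration is fine: the projection to $\mathtt{End}_{\C C^{\deltan 1}}(f_n)$ is the pull-back of the trivial fibration $d_0^n$ (this is the inductive hypothesis, which you should state as such rather than as ``an iteration of Proposition~\ref{uno}(1)''), and $d_0$ is a trivial fibration by Proposition~\ref{uno}(1).

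There is, however, a genuine error in your treatment of $d_{n+1}$. You assert that $d_1\colon\mathtt{End}_{\C C^{\deltan 1}}(f_n)\r\mathtt{End}_{\C C}(X_n)$ is a \emph{trivial fibration} by Proposition~\ref{uno}(2), and hence that its pull-back $d_{n+1}$ is a trivial fibration. But Proposition~\ref{uno}(2) only says $d_1$ is a \emph{weak equivalence}; looking at the pull-back square~\eqref{pb}, $d_1$ is the pull-back of $\hom_{\C C}(f_n^{\otimes m},X_{n+1})$, and since $f_n$ is a trivial fibration (not a cofibration) there is no reason for this map to be a fibration. The fix is easy and is precisely what the paper does: $d_1$ is a weak equivalence \emph{between fibrant operads} (the target is fibrant by Corollary~\ref{esfibrante}, the source because $d_0$ is a trivial fibration with fibrant target), and you are pulling it back along the fibration $d_0^n$; the pull-back of a weak equivalence between fibrant objects along a fibration is a weak equivalence. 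This yields only that $d_{n+1}$ is a weak equivalence, not a trivial fibration---but that is all the statement claims.
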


\begin{proof}
By induction on $n$. The case $n=0$ follows directly from Proposition \ref{uno} and Corollary \ref{esfibrante}. Assume $n>0$. 
Consider the following pull back diagram,
$$\xymatrix@C=50pt{
\mathtt{End}_{\C{C}^{\deltan{n}}}(X_{0}\r\cdots \r
X_{n})(m)\ar[d]_{d_0^n}
&\mathtt{End}_{\C{C}^{\deltan{n+1}}}(X_{0}\r\cdots \r
X_{n}\r X_{n+1})(m)
\ar@<8ex>[ddd]^{d_0^{n+1}}
\ar@{}[lddd]|{\text{pull}}\ar[l]_-{d_{n+1}}\\
\mathtt{End}_{\C{C}}(
X_{n})(m)\ar@{=}[d]\\
\hom_{\C C}(X_{n}^{\otimes m},X_{n})\ar[d]_-{\hom_{\C C}(X_{n}^{\otimes m},f_{n})}\\
\hom_{\C C}(X_{n}^{\otimes m},X_{n+1})
&\hom_{\C C}(X_{n+1}^{\otimes m},X_{n+1})
=\mathtt{End}_{\C{C}}(X_{n+1})\ar[l]_-{\hom_{\C C}(f_{n}^{\otimes m},X_{n+1})}}$$
The morphism $d_0^n$ is a trivial fibration by induction hypothesis. We showed within the proof of Proposition \ref{uno} that, under the hypotheses of this corollary,   the morphisms 
$\hom_{\C C}(X_{n}^{\otimes m},f_{n})$ and 
$\hom_{\C C}(f_{n}^{\otimes m},X_{n+1})$ are  a trivial fibration  and a weak equivalence between fibrant objects, respectively. Hence, the first part of the statement follows from the facts that trivial fibrations are closed under composition and pull-backs and that weak equivalences between fibrant objects are closed under pull-backs along fibrations. Finally, the first and last operads are fibrant by induction hypothesis, and the middle one is fibrant since $d_{0}^{n+1}$ is a trivial fibration with fibrant target.
\end{proof}

\section{Spaces of algebras}\label{mapping}

The main result of this section (Theorem \ref{rezk}) generalizes Rezk's \cite[Theorem 1.1.5]{rezkphd} in the non-symmetric context.

We will be concerned with the \emph{classifying space} $|w\C M|$ (also called \emph{nerve}) of the subcategory of weak equivalences $w\C M$ in a model category $\C M$, and more generally in appropriate subcategories of a model category. The classifying space $\abs{w\C M}$ is sometimes referred to as the \emph{classification complex} or \emph{space} of $\C M$, compare \cite{acpdss}. This need not be an honest simplicial set because $\C M$ may have a proper class of objects. Moreover, it need not be homotopically small in the sense of \cite{fcha}. We will overlook this fact since there are well known ways of patching these problems: restricting to the closure of a set of objects under weak equivalences, working with Grothendieck universes, etc.

Denote $\C M_c$ and $\C M_f$ the full subcategories of cofibrant and fibrant objects in $\C M$, respectively. The inclusions induce weak equivalences $\abs{w\C M_c}\simeq\abs{w\C M}\simeq \abs{w\C M_f}$ \cite[Lemma 4.2.4]{rezkphd}. Moreover, a Quillen equivalence $F\colon \C M\rightleftarrows\C N\colon G$ induces weak equivalences $\abs{w\C M_c}\simeq \abs{w\C N_c}$ and $\abs{w\C N_f}\simeq \abs{w\C M_f}$ by \cite[Proposition 2.3, Exemple 2.5 and Th\'eor\`eme 2.9]{ikted}. Therefore, if in addition $G$ or $F$ preserves weak equivalences then it induces a weak equivalence $\abs{w\C M}\simeq \abs{w\C N}$.

In this paper, the word space is a synonym of simplicial set. Bisimplicial sets will be regarded as spaces via the diagonal construction.

Let $\C V$ be a symmetric monoidal model category and $\C C$ a model $\C V$-algebra.
A morphism $f\colon\mathcal{O}\r\mathcal{P}$ in $\operad{\C V}$ induces a Quillen pair of change of operad functors,
\begin{equation}\label{qp}
\xymatrix{\algebra{\mathcal{O}}{\C C}\ar@<.5ex>[r]^-{f_{*}}& \algebra{\mathcal{P}}{\C C}.\ar@<.5ex>[l]^-{f^*}}
\end{equation}
The functor $f^*$ restricts the action of $\mathcal{P}$ to $\mathcal{O}$ along $f$ and is the identity on underlying objects in $\C C$, hence it preserves 
fibrations and weak equivalences. 
The functor $f_*$ is left adjoint to $f^*$. This Quillen pair is a Quillen equivalence if $f$ is a weak equivalence and the operads $\mathcal O$ and $\mathcal P$ are admissible in the sense of the following definition, see \cite[Theorem D.4]{htnso2}.

\begin{defn}\label{admissible}
An object $X$ in $\C V$ is \emph{$\unit$-cofibrant} if there exists a cofibration $\unit\into X$ from the tensor unit $\unit$. An operad $\mathcal O$ in $\C V$ is \emph{admissible} if each $\mathcal O(n)$ is cofibrant or $\unit$-cofibrant, $n\geq 0$.
\end{defn}

\begin{rem}
If $\mathcal O$ is a cofibrant operad then $\mathcal O(n)$ is cofibrant for $n\neq 1$ and $\unit$-cofibrant for $n=1$ \cite[Corollary C.3]{htnso2}, hence cofibrant operads are admissible. The unital associative operad in Example \ref{uass0} is $\unit$-cofibrant in all arities, and the associative operad is $\unit$-cofibrant in positive arities and cofibrant in arity $0$, so they are also admissible despite they are not cofibrant.
\end{rem}

%
%

If $\mathcal{O}^\bullet$ is a cosimplicial operad then the contravariant change of base operad functors, like $f^*$ in \eqref{qp}, induced by cofaces and codegeneracies give rise to a simplicial category $\algebra{\mathcal{O}^\bullet}{\C C}$. 

\begin{lem}\label{1we}
If $\mathcal{O}^\bullet$ is a cosimplicial resolution in the sense of \cite[\S 4.3]{fcha} of an admissible operad $\mathcal{O}$ in $\C V$, then there is a weak equivalence
$$\abs{w\algebra{\mathcal{O}}{\C C}}\st{\sim}\To \abs{ w\algebra{\mathcal{O}^\bullet}{\C C}}.$$
\end{lem}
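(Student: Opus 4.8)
The goal is to compare the classifying space of weak equivalences of $\mathcal O$-algebras with that of the simplicial category $\algebra{\mathcal O^\bullet}{\C C}$, where $\mathcal O^\bullet$ is a cosimplicial resolution of $\mathcal O$. My plan is to bootstrap from the degree-zero comparison using the fact that every face and degeneracy map of $\mathcal O^\bullet$ induces a Quillen equivalence on categories of algebras, together with the standard principle that a levelwise weak equivalence of simplicial spaces induces a weak equivalence on diagonals. Concretely, I would think of $\abs{w\algebra{\mathcal O^\bullet}{\C C}}$ as the diagonal of the bisimplicial set $[m]\mapsto \mathrm{Ner}\bigl(w\algebra{\mathcal O^m}{\C C}\bigr)$, and of $\abs{w\algebra{\mathcal O}{\C C}}$ as the diagonal of the constant simplicial object on $\mathrm{Ner}\bigl(w\algebra{\mathcal O}{\C C}\bigr)$.

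\textbf{Step 1: set up the comparison map.} Since $\mathcal O^\bullet$ is a cosimplicial resolution of $\mathcal O$, there is a canonical augmentation $\mathcal O^0\to\mathcal O$ (or $\mathcal O\to\mathcal O^0$, depending on the convention in \cite[\S4.3]{fcha}) which is a weak equivalence, and which is compatible with the two coface maps $\mathcal O^0\rightrightarrows\mathcal O^1$ up to the structure of a resolution. This augmentation induces a map of simplicial categories from the constant simplicial category on $\algebra{\mathcal O}{\C C}$ to $\algebra{\mathcal O^\bullet}{\C C}$ via the change-of-operad functors $f^\ast$, which preserve fibrations and weak equivalences (as noted after \eqref{qp}); in particular it restricts to subcategories of weak equivalences and gives the desired map $\abs{w\algebra{\mathcal O}{\C C}}\to\abs{w\algebra{\mathcal O^\bullet}{\C C}}$.

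\textbf{Step 2: levelwise analysis.} For each $m\geq 0$, the unique non-decreasing surjection $\deltan{m}\to\deltan{0}$ induces a weak equivalence of operads $\mathcal O\simeq\mathcal O^0\to\mathcal O^m$ (one of the defining properties of a cosimplicial resolution: all the structure maps of $\mathcal O^\bullet$ are weak equivalences, and $\mathcal O^0\simeq\mathcal O$). Since $\mathcal O$ is admissible and a cosimplicial resolution consists of objects weakly equivalent to $\mathcal O$ — which are therefore also admissible, each component being cofibrant or $\unit$-cofibrant up to weak equivalence; here one should invoke that resolutions can be taken to be levelwise cofibrant or argue admissibility is preserved — the induced Quillen pair $\algebra{\mathcal O}{\C C}\rightleftarrows\algebra{\mathcal O^m}{\C C}$ is a Quillen equivalence by \cite[Theorem D.4]{htnso2}. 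By the discussion at the start of Section \ref{mapping}, a Quillen equivalence where one adjoint preserves weak equivalences — and here $f^\ast$ does — induces a weak equivalence $\abs{w\algebra{\mathcal O}{\C C}}\simeq\abs{w\algebra{\mathcal O^m}{\C C}}$. Hence the map of bisimplicial sets in Step 1 is a levelwise weak equivalence.

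\textbf{Step 3: conclude via the diagonal.} A levelwise weak equivalence of bisimplicial sets induces a weak equivalence on diagonals (the Bousfield--Kan / realization lemma for bisimplicial sets). Applying this to the map of Step 1, which is levelwise a weak equivalence by Step 2, yields the weak equivalence $\abs{w\algebra{\mathcal O}{\C C}}\st{\sim}\To\abs{w\algebra{\mathcal O^\bullet}{\C C}}$ asserted in the lemma.

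\textbf{Main obstacle.} The routine parts are the diagonal argument and the identification of the comparison map. The delicate point is Step 2: verifying that each $\mathcal O^m$ is admissible so that \cite[Theorem D.4]{htnso2} applies, and that the resolution's structure maps are genuinely weak equivalences of operads landing between admissible operads. If the cosimplicial resolution is not automatically levelwise admissible, one must either choose it to be (resolutions may be built from cofibrant-in-the-Reedy-sense data, whose components are cofibrant hence admissible) or supply an extra argument that $f^\ast$ still induces an equivalence of classification spaces in the relevant generality. Everything else is an assembly of facts already recorded in the excerpt.
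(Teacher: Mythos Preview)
Your argument is essentially the same as the paper's. The paper also factors through the comparison with $\mathcal O^0$ and then invokes that all cosimplicial structure maps induce Quillen equivalences on algebra categories (hence weak equivalences on nerves of weak-equivalence subcategories), concluding via the constant-inclusion-into-diagonal principle you describe.

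Your ``main obstacle'' is not an obstacle at all, and the paper disposes of it in one clause: a cosimplicial resolution in the sense of \cite[\S4.3]{fcha} is by definition a Reedy cofibrant cosimplicial object equipped with a weak equivalence to the constant diagram on $\mathcal O$. Reedy cofibrant cosimplicial objects are levelwise cofibrant, so each $\mathcal O^m$ is a cofibrant operad and therefore admissible. You do not need $\mathcal O$ itself to transfer admissibility to the $\mathcal O^m$; they have it on their own. With that, \cite[Theorem~D.4]{htnso2} applies cleanly at every level and your Steps~2--3 go through exactly as written.
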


\begin{proof}
The operads $\mathcal O^{n}$ are cofibrant, hence admissible, $n\geq 0$, 
so faces and degeneracies in $\algebra{\mathcal{O}^\bullet}{\C C}$  are right adjoints of a Quillen equivalence, therefore they induce weak equivalences on nerves. This implies that the iterated degeneracies induce a weak equivalence
$$\abs{w\algebra{\mathcal{O}^0}{\C C}}\st{\sim}\To \abs{ w\algebra{\mathcal{O}^\bullet}{\C C}}.$$
For the same reason, the weak equivalence $ \mathcal{O}^0\st{\sim}\r\mathcal{O}$ induces a weak  equivalence 
$$\abs{w\algebra{\mathcal{O}}{\C C}}\st{\sim}\To \abs{w\algebra{\mathcal{O}^0}{\C C}}.$$
\end{proof}

Denote $fw\C{C}_{fc}$ the category of fibrant-confibrant objects in $\C{C}$ and trivial fibrations between them. Under the assumptions of the previous lemma, define the simplicial category $\C{D}_{\bullet}$ as the following pull-back,
\begin{equation}\label{db}
\xymatrix{\C{D}_{\bullet}\ar[r]\ar[d]_{\rho'_\bullet}\ar@{}[rd]|{\text{pull}}&
w\algebra{\mathcal{O}^\bullet}{\C C}\ar[d]^{\rho_\bullet}\\
fw\C{C}_{fc}\ar[r]_-{\text{inclusion}}&\C C}
\end{equation}
Here we regard the two categories in the bottom as constant simplicial categories, and $\rho_\bullet$ is the forgetful simplicial functor. Notice that $\C{D}_n$ is the category of trivial fibrations between $\mathcal{O}^n$-algebras whose underlying objects in $\C C$ are fibrant and cofibrant.

\begin{lem}\label{2we}
In the situation of the previous paragraph, the horizontal (simplicial) functors in \eqref{db} induce weak equivalences
$$\abs{ fw\C{C}_{fc}}\st{\sim}\To \abs{ w\C{C}},\qquad\abs{\C{D}_\bullet}\st{\sim}\To\abs{ w\algebra{\mathcal{O}^\bullet}{\C C}}.$$
\end{lem}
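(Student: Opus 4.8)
The plan is to prove the two weak equivalences in Lemma \ref{2we} separately, each by the same mechanism: exhibiting the relevant functor as one whose homotopy fibers over every object are contractible (or more precisely, applying Quillen's Theorem A after arranging the situation appropriately). For the first equivalence $\abs{fw\C C_{fc}}\to\abs{w\C C}$, I would first reduce to comparing $fw\C C_{fc}$ with $w\C C_{fc}$, since the inclusion $\C C_{fc}\hookrightarrow \C C$ already induces $\abs{w\C C_{fc}}\simeq\abs{w\C C}$ by the discussion following \cite[Lemma 4.2.4]{rezkphd} recalled at the start of the section. The remaining point is that the subcategory $fw\C C_{fc}$ of \emph{trivial fibrations} between fibrant-cofibrant objects has the same classifying space as the subcategory $w\C C_{fc}$ of \emph{all} weak equivalences between fibrant-cofibrant objects. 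This is a standard fact about model categories: every weak equivalence between fibrant-cofibrant objects factors as a trivial cofibration followed by a trivial fibration, and trivial cofibrations between fibrant-cofibrant objects are, up to the relevant homotopical bookkeeping, invertible in the homotopy category; more structurally one invokes a result such as \cite[Proposition 2.3, Exemple 2.5]{ikted} or Cisinski's comparison, identifying $\abs{w\C C_{fc}}$ with the classifying space of the subcategory of trivial fibrations. I would cite the appropriate statement from \cite{ikted} or \cite{acpdss} rather than reprove it.

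For the second equivalence $\abs{\C D_\bullet}\to\abs{w\algebra{\mathcal O^\bullet}{\C C}}$, the plan is to work simplicial-degree by simplicial-degree. Because geometric realization (diagonal of a bisimplicial set) preserves levelwise weak equivalences of simplicial spaces, it suffices to show that for each $n\geq 0$ the functor $\C D_n\to w\algebra{\mathcal O^n}{\C C}$ induces a weak equivalence on nerves. Here $\C D_n$ is, by the pull-back \eqref{db}, the category of trivial fibrations between $\mathcal O^n$-algebras whose underlying $\C C$-objects are fibrant-cofibrant, and the functor to $w\algebra{\mathcal O^n}{\C C}$ is the evident inclusion composed with nothing fancy. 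Since $\mathcal O^n$ is cofibrant, hence admissible, the model category $\algebra{\mathcal O^n}{\C C}$ has all the usual properties, and the underlying-object functor detects and preserves fibrations and weak equivalences. So this is exactly the same statement as the first equivalence but now in $\algebra{\mathcal O^n}{\C C}$ in place of $\C C$: the classifying space of trivial fibrations between fibrant-cofibrant objects of a model category agrees with the classifying space of all weak equivalences of that model category. I would therefore isolate, before stating Lemma \ref{2we}, a single general lemma of the form ``for any (nice enough) model category $\C M$, the inclusion $fw\C M_{fc}\hookrightarrow w\C M$ is a weak equivalence on classifying spaces,'' prove that once, and then apply it to $\C M=\C C$ and to $\C M=\algebra{\mathcal O^n}{\C C}$ for each $n$.

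The general lemma itself I would prove in two moves. First, $\abs{w\C M_{fc}}\simeq\abs{w\C M}$, which is already recorded. Second, $\abs{fw\C M_{fc}}\simeq\abs{w\C M_{fc}}$: here one uses that the inclusion of the category of trivial fibrations into the category of all weak equivalences between fibrant-cofibrant objects is a left (or right) adjoint up to homotopy, or more concretely one applies Quillen's Theorem A by checking that for every object $X$ in $w\C M_{fc}$ the comma category $(fw\C M_{fc}\downarrow X)$ — objects are trivial fibrations $Z\onto X$ with $Z$ fibrant-cofibrant, with the comma structure over weak equivalences — is contractible. Contractibility follows because this comma category has weakly initial objects given by functorial fibrant-cofibrant factorizations of maps into $X$, and one can promote this to genuine contractibility by the usual argument with cylinder/path objects in $\C M$, or simply by appealing to the cited results of Cisinski \cite{ikted} which already establish precisely this invariance.

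The main obstacle I expect is purely bookkeeping of a model-categorical nature rather than anything deep: namely checking carefully that the pull-back square \eqref{db} behaves well on classifying spaces — i.e.~that one genuinely reduces the second equivalence to the first applied to $\algebra{\mathcal O^n}{\C C}$ — and that the cited invariance statements from \cite{ikted} are applicable in the generality of this paper (arbitrary cofibrantly generated symmetric monoidal model categories, algebras over admissible operads). In particular one must make sure that $fw\C M_{fc}$, with trivial fibrations as morphisms, is the right subcategory for the Cisinski-type comparison and that the simplicial direction $\mathcal O^\bullet$ interacts only levelwise, which it does since the comparison functors are applied degreewise and realization commutes with the degreewise weak equivalences. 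Once the general lemma is in place, both displayed equivalences are immediate.
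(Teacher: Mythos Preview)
Your approach is essentially the paper's: reduce the second equivalence to a levelwise comparison and invoke standard model-category facts (the paper cites \cite[Lemmas 4.2.4 and 4.2.5]{rezkphd} directly for the first equivalence, exactly as you propose). There is, however, one genuine imprecision in your reduction.

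You correctly describe $\C D_n$ as the category of trivial fibrations between $\mathcal O^n$-algebras whose \emph{underlying $\C C$-objects} are fibrant-cofibrant, but then claim this is ``exactly the same statement as the first equivalence but now in $\algebra{\mathcal O^n}{\C C}$.'' It is not: your general lemma applied to $\C M=\algebra{\mathcal O^n}{\C C}$ yields $\abs{fw(\algebra{\mathcal O^n}{\C C})_{fc}}\simeq\abs{w\algebra{\mathcal O^n}{\C C}}$, where $(\algebra{\mathcal O^n}{\C C})_{fc}$ means fibrant-cofibrant \emph{as $\mathcal O^n$-algebras}. An $\mathcal O^n$-algebra whose underlying object is cofibrant in $\C C$ need not be cofibrant in $\algebra{\mathcal O^n}{\C C}$, so $\C D_n$ is a priori strictly larger than $fw(\algebra{\mathcal O^n}{\C C})_{fc}$. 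The missing ingredient is the converse inclusion: cofibrant $\mathcal O^n$-algebras have cofibrant underlying objects in $\C C$ (since $\mathcal O^n$ is cofibrant), which is \cite[Corollaries D.2 and D.3]{htnso2}. This gives the chain $fw(\algebra{\mathcal O^n}{\C C})_{fc}\subset\C D_n\subset w\algebra{\mathcal O^n}{\C C}$, and then the Rezk lemmas (applied now to the intermediate subcategory, or via a two-out-of-three after showing the first inclusion is also an equivalence) finish the job. The paper's proof invokes precisely these corollaries for this reason; you should add this step.
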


\begin{proof}
The first part of the statement follows from \cite[Lemmas 4.2.4 and 4.2.5]{rezkphd}. Moreover, by the same results together with \cite[Corollaries D.2 and D.3]{htnso2}, the inclusion $\C{D}_n\subset w\algebra{\mathcal{O}^n}{\C C}$ induces a weak equivalence $$\abs{\C{D}_n}\st{\sim}\To \abs{w\algebra{\mathcal{O}^n}{\C C}},\quad n\geq 0,$$ hence the second map in the statement is also a weak equivalence.
\end{proof}

Recall that the \emph{category of simplicies} $\Delta K$ of a simplicial set $K$ is the category whose objects are pairs $(\deltan{n},x)$ with $n\geq 0$ and $x\in K_n$. A morphism $\sigma\colon (\deltan{n},x)\r (\deltan{m},y)$ in $\Delta K$ is a morphism $\sigma\colon\deltan{n}\r\deltan{m}$ in $\Delta$ such that the induced map $\sigma^*\colon K_m\r K_n$ takes $y$ to $x$, $\sigma^*(y)=x$. This category comes equipped with a natural projection functor $p_K\colon \Delta K\r\Delta$, $p_K(\deltan{n},x)=\deltan{n}$. This construction defines a functor from the category of simplicial sets to the category of small categories over $\Delta$,
\begin{align*}
\operatorname{Set}^{\Delta^{\op}}&\To\operatorname{Cat}\downarrow\Delta, \\
K&\;\mapsto \; p_K.
\end{align*}

\begin{lem}\label{natin}
If $Y$ is a fibrant-cofibrant object in~$\C{C}$ and  $\mathcal{O}^\bullet$ is a cosimplicial resolution  in $\operad{\C V}$, then, with the notation in \eqref{db}, 
 $\abs{\rho'_\bullet\downarrow Y}$ is  weakly equivalent to $\operad{\C V}(\mathcal{O}^\bullet,\mathtt{End}_{\C C}(Y))$. 
\end{lem}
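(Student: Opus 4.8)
The plan is to identify the comma category $\rho'_\bullet\downarrow Y$ explicitly, to recognize its nerve as the homotopy colimit of a diagram of contractible (or empty) spaces indexed over the category of simplices of $\operad{\C V}(\mathcal{O}^\bullet,\mathtt{End}_{\C C}(Y))$, and then to invoke Thomason's homotopy colimit theorem to conclude. First I would unwind the definitions: an object of $\rho'_\bullet\downarrow Y$ is a pair consisting of an object $(\deltan{n},\alpha)$ of $\C D_\bullet$ — that is, a string of trivial fibrations $X_0\r\cdots\r X_n$ between fibrant-cofibrant objects, together with a compatible family of $\mathcal O^n$-algebra structures — and a morphism in $fw\C C_{fc}$ from $\rho'_n$ of this object, namely $X_0$, to $Y$; equivalently, since morphisms in $fw\C C_{fc}$ are trivial fibrations, a trivial fibration $X_0\r Y$. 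Composing, an object of $\rho'_\bullet\downarrow Y$ is the same datum as a string of trivial fibrations $X_0\r\cdots\r X_n\r Y$ of fibrant-cofibrant objects, of length $n+1$, carrying compatible $\mathcal O^n$-algebra structures on $X_0,\dots,X_n$.

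The key observation is then that by Corollary \ref{tonteria} (applied to $I=\deltan{n}$) and the pull-back diagrams \eqref{pb}, such compatible $\mathcal O^n$-algebra structures are in bijection with operad morphisms $\mathcal O^n\r\mathtt{End}_{\C C^{\deltan{n}}}(X_0\r\cdots\r X_n)$, and by Corollary \ref{hitech} the projection $\mathtt{End}_{\C C^{\deltan{n}}}(X_0\r\cdots\r X_n)\To\mathtt{End}_{\C C^{\deltan{0}}}(X_n)$ — and further down to $\mathtt{End}_{\C C}(Y)$ after appending the last trivial fibration — is a composite of a weak equivalence and a trivial fibration of \emph{fibrant} operads. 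Since $\mathcal O^n$ is cofibrant, the induced map of mapping sets $\operad{\C V}(\mathcal O^n,\mathtt{End}_{\C C^{\deltan{n+1}}}(X_0\r\cdots\r X_n\r Y))\r\operad{\C V}(\mathcal O^n,\mathtt{End}_{\C C}(Y))$ is a bijection up to the lifting against a trivial fibration; more precisely the fiber of the forgetful functor $\rho'_\bullet\downarrow Y$ over a fixed $\varphi\colon\mathcal O^\bullet\r\mathtt{End}_{\C C}(Y)$ has contractible (indeed, has a final or trivial) classifying space by the usual lifting argument, since all the intermediate operads are fibrant and $\mathcal O^n$ cofibrant. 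This is the step I expect to be the main obstacle: making precise, in a homotopy-coherent way across all simplicial degrees $\bullet$, that the comma category $\rho'_\bullet\downarrow Y$ fibers over the (discrete, simplicial-set valued) mapping object $\operad{\C V}(\mathcal O^\bullet,\mathtt{End}_{\C C}(Y))$ with contractible fibers, and that the base of this fibration is exactly the category of simplices of that simplicial set.

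With that in hand, I would set $K=\operad{\C V}(\mathcal{O}^\bullet,\mathtt{End}_{\C C}(Y))$, viewed as a simplicial set, and produce a functor $\rho'_\bullet\downarrow Y\To \Delta K$ sending the datum above to $(\deltan{n},\varphi)$ where $\varphi\colon\mathcal O^n\r\mathtt{End}_{\C C}(Y)$ is the composite determined by the algebra structures and the string of trivial fibrations. The fiber of this functor over $(\deltan{n},\varphi)$ is the category of strings $X_0\r\cdots\r X_n\r Y$ of trivial fibrations between fibrant-cofibrant objects equipped with the lift of $\varphi$ through $\mathtt{End}_{\C C^{\deltan{n+1}}}$; I would argue this category has contractible nerve, e.g.\ by exhibiting $(Y=\cdots=Y\r Y,\text{canonical lift})$ as a weakly terminal object and citing the standard contractibility criterion, using Corollary \ref{hitech} to know the relevant lift exists and is unique. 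Thomason's theorem then gives $\abs{\rho'_\bullet\downarrow Y}\simeq\operatorname{hocolim}_{\Delta K}(\ast)\simeq\abs{\Delta K}\cong \abs{K}=\operad{\C V}(\mathcal O^\bullet,\mathtt{End}_{\C C}(Y))$, where the last identification is the well-known fact that the nerve of the category of simplices of a simplicial set recovers that simplicial set up to weak equivalence. Finally I would check naturality of all these identifications in $Y$ and in $\mathcal O^\bullet$, which is routine from the construction.
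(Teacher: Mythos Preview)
Your identification of the simplices is essentially correct and matches the paper's formula
\[
\abs{\rho'_t\downarrow Y}_s=\coprod_{X_0\r\cdots\r X_s\r Y\text{ in }fw\C{C}_{fc}}\operad{\C V}(\mathcal{O}^t, \mathtt{End}_{\C{C}^{\deltan{s}}}(X_0\r\cdots\r X_s)),
\]
though note that $\abs{\rho'_\bullet\downarrow Y}$ is genuinely \emph{bisimplicial}: the two directions (cosimplicial index $t$ of $\mathcal O^\bullet$ and nerve index $s$) are independent. Your write-up conflates them, treating a datum $(\deltan{n},\text{string of length }n)$ as an ``object'', which obscures what has to be checked. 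An object of $\C D_t$ is a single $\mathcal O^t$-algebra; the strings only appear as simplices of the nerve.

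More seriously, the functor $\rho'_\bullet\downarrow Y\to\Delta K$ you propose cannot be defined. Given an operad map $\mathcal O^t\to\mathtt{End}_{\C C^{\deltan{s}}}(X_0\r\cdots\r X_s)$ and a trivial fibration $X_s\r Y$, there is no ``composite determined by the algebra structures and the string'' landing in $\mathtt{End}_{\C C}(Y)$. What Corollary~\ref{hitech} gives is a \emph{zigzag}
\[
\mathtt{End}_{\C{C}^{\deltan{s}}}(X_0\r\cdots \r X_s)
\st{d_{s+1}}\longleftarrow
\mathtt{End}_{\C{C}^{\deltan{s+1}}}(X_0\r\cdots \r X_s\r Y)\st{d_0^{s+1}}\To \mathtt{End}_{\C{C}}(Y),
\]
where the left leg is only a weak equivalence, not a trivial fibration, so you cannot lift along it functorially to produce an honest map $\mathcal O^t\r\mathtt{End}_{\C C}(Y)$. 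Consequently there is no $\varphi\in K_t$ to send your simplex to, and the Thomason argument never gets off the ground. The same obstruction undermines the ``weakly terminal object'' claim for the fibers: the constant string $Y=\cdots=Y$ does carry a canonical algebra structure, but there are no morphisms from the other objects to it in the would-be fiber category, because a trivial fibration $X_i\r Y$ does not induce a map of endomorphism operads in the direction you need.

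The paper resolves exactly this point by working with the zigzag rather than trying to collapse it. One introduces an auxiliary bisimplicial set built from the \emph{extended} strings $X_0\r\cdots\r X_s\r Y$ (the functor $L_Y$), which maps to the original one via $d_{s+1}$ and to the constant-$Y$ one via $d_0^{s+1}$. Applying $\operad{\C V}(\mathcal O^\bullet,-)$ to the two legs of Corollary~\ref{hitech} and invoking the mapping-space property of cosimplicial resolutions yields a zigzag of weak equivalences of bisimplicial sets; the constant-$Y$ end is then visibly the product $\abs{(fw\C C_{fc})\downarrow Y}\times\operad{\C V}(\mathcal O^\bullet,\mathtt{End}_{\C C}(Y))$, with the first factor contractible. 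Your instinct to use Corollary~\ref{hitech} and aim for $\Delta K$ is sound; what is missing is the intermediate object that lets you navigate the wrong-way weak equivalence.
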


\begin{proof}
We will use an alternative construction of the bisimplicial set $\abs{\rho'_\bullet\downarrow Y}$ in terms of endomorphism operads. By Corollary \ref{tonteria}, 
\begin{equation}\label{anatin}
\abs{\rho'_t\downarrow Y}_s=\coprod_{X_0\r\cdots\r X_s\r Y\text{ in }fw\C{C}_{fc}}\operad{\C V}(\mathcal{O}^t, \mathtt{End}_{\C{C}^{\deltan{s}}}(X_0\r\cdots\r X_s)).
\end{equation}
Notice that the set indexing this coproduct is $\abs{(fw\C{C}_{fc})\downarrow Y}_s$. In order to describe the bisimplicial structure of $\abs{\rho'_\bullet\downarrow Y}$  in terms of the right hand side of \eqref{anatin}, we consider the functor
\begin{align*}
E\colon (\Delta\abs{\C{C}})^\op&\To\operad{\C V},\\
(\deltan{n},X_0\r\cdots \r X_n)&\;\mapsto\;\mathtt{End}_{\C{C}^{\deltan{n}}}(X_0\r\cdots \r X_n).
\end{align*}
Notice that $X_0\r\cdots \r X_n$ is a functor $X\colon\deltan{n}\r\C{C}$. Given a morphism $\sigma\colon (\deltan{n},X)\r (\deltan{m},X')$ in the category of simplices, the induced morphism $E(\sigma)$ is $\sigma^*\colon \mathtt{End}_{\C{C}^{\deltan{m}}}(X')\r \mathtt{End}_{\C{C}^{\deltan{n}}}(X)$, here we use the notation in \eqref{puba}. We also consider the functor
\begin{align*}
 F_Y\colon (fw\C{C}_{fc})\downarrow Y&\To\C{C},\\
(X\r Y)&\;\mapsto\; X,
\end{align*}
and the composite functor
$$
\xymatrix@C=20pt{
(\Delta\abs{(fw\C{C}_{fc})\downarrow Y})^\op
\ar[rr]^-{(\Delta\abs{F_Y})^\op}&&
(\Delta\abs{\C{C}})^\op
\ar[r]^-{E}&
\operad{\C V}
\ar[rr]^-{\operad{\C V}(\mathcal{O}^\bullet,-)}&&
\operatorname{Set}^{\Delta^\op}.}
$$
Taking the left Kan extension \cite[X.3]{cwm2} of this functor along the opposite of the natural proyection from the category of simplices to $\Delta$, we obtain a bisimplicial set $$\operatorname{Lan}_{p^{\op}_{\abs{(fw\C{C}_{fc})\downarrow Y}}}\operad{\C V}(\mathcal{O}^\bullet,E(\Delta\abs{ F_Y})^\op).$$
One can easily check that the $(s,t)$ set of this bisimplicial set is the right hand side of \eqref{anatin}. Moreover, this defines an isomorphism between this bisimplicial set and $\abs{\rho'_\bullet\downarrow Y}$.

We need two more functors
\begin{align*}
L_Y,C_Y\colon \Delta\abs{(fw\C{C}_{fc})\downarrow Y}&\To\Delta\abs{\C{C}},\\
L_Y(\deltan{n},X_0\r\cdots \r X_n\r Y)&=(\deltan{n+1},X_0\r\cdots \r X_n\r Y),\\ L_Y(d^i)&=d^i,\quad L_Y(s^i)=s^i,\\
 C_Y(\deltan{n},X_0\r\cdots \r X_n\r Y)&=(\deltan{0},Y),
\end{align*}
where $C_Y$ is constant, 
and two natural transformations,
\begin{align*}
\delta_Y\colon \Delta\abs{F_Y}&\Longrightarrow L_Y,&
\delta_Y(\deltan{n},X_0\r\cdots \r X_n\r Y)&=d^{n+1},\\
\varsigma_Y\colon C_Y&\Longrightarrow L_Y,&
\varsigma_Y(\deltan{n},X_0\r\cdots \r X_n\r Y)&=(d^0)^{n+1}.
\end{align*}
For simplicity, denote $K=\abs{(fw\C{C}_{fc})\downarrow Y}$.
We claim that the following  morphisms of bisimplicial sets are weak equivalences,
$$
\xymatrix{\operatorname{Lan}_{p^{\op}_{K}}\operad{\C V}(\mathcal{O}^\bullet,E(\Delta\abs{ F_Y})^\op)
\\
\operatorname{Lan}_{p^{\op}_{K}}\operad{\C V}(\mathcal{O}^\bullet,EL_Y^\op)
\ar[u]_-{\operatorname{Lan}_{p^{\op}_{K}}\operad{\C V}(\mathcal{O}^\bullet,E(\delta_Y))}
\ar[d]^-{\operatorname{Lan}_{p^{\op}_{K}}\operad{\C V}(\mathcal{O}^\bullet,E(\varsigma_Y))}\\
\operatorname{Lan}_{p^{\op}_{K}}\operad{\C V}(\mathcal{O}^\bullet,E C_Y^\op)}
$$
It is enough to notice that, at the $(n,\bullet)$ simplicial set, we have the coproduct indexed by 
$K_n$ of the morphisms of simplicial sets obtained by applying $\operad{\C V}(\mathcal{O}^\bullet,-)$ to the weak equivalences between fibrant operads in Corollary \ref{hitech}, $n\geq 0$,
$$
\xy
(0,0)*{\coprod \limits_{X_0\r\cdots\r X_n\r Y\text{ in }fw\C{C}_{fc}}\operad{\C V}(\mathcal{O}^\bullet, \mathtt{End}_{\C{C}^{\deltan{n}}}(X_0\r\cdots\r X_n))},
(0,-15)*{\coprod\limits_{X_0\r\cdots\r X_n\r Y\text{ in }fw\C{C}_{fc}}\operad{\C V}(\mathcal{O}^\bullet, \mathtt{End}_{\C{C}^{\deltan{n+1}}}(X_0\r\cdots\r X_n\r Y))},
(0,-30)*{\coprod\limits_{X_0\r\cdots\r X_n\r Y\text{ in }fw\C{C}_{fc}}\operad{\C V}(\mathcal{O}^\bullet, \mathtt{End}_{\C{C}}(Y))}
\ar(10,-10);(10,-1)_-\sim^{(d_{n+1})_*}
\ar(10,-16);(10,-25)^-\sim_{(d_0^{n+1})_*}
\endxy
$$
The functor $\operad{\C V}(\mathcal{O}^\bullet,-)$ takes the weak equivalences between fibrant operads in Corollary \ref{hitech} to weak equivalences of simplicial sets by \cite[Corollary 6.4]{fcha}.

In order to complete the definition of the weak equivalence claimed in the statement we notice that
$$\operatorname{Lan}_{p^{\op}_{K}}\operad{\C V}(\mathcal{O}^\bullet,EC_Y^\op)
=
\abs{(fw\C{C}_{fc})\downarrow Y}\times \operad{\C V}(\mathcal{O}^\bullet, \mathtt{End}_{\C{C}}(Y)).$$
The category $(fw\C{C}_{fc})\downarrow Y$ has a final object, the identity in $Y$, hence its nerve is  contractible and  the projection onto the second factor of the product is a weak equivalence. 
\end{proof}

We can now proceed with the main theorem of this section. 

\begin{thm}\label{rezk}
Let $\mathcal{O}$ be an admissible operad in $\C V$. 
The homotopy fiber of the map 
$$\abs{w\algebra{\mathcal{O}}{\C C}}\To\abs{w\C C}$$
induced by the forgetful functor $\algebra{\mathcal{O}}{\C C}\r\C C$
at a fibrant-cofibrant object $Y$ is 
$\rmap_{\operad{\C V}}(\mathcal{O},\mathtt{End}_{\C C}(Y))$. 
\end{thm}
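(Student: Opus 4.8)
The plan is to identify the homotopy fiber via a chain of weak equivalences already assembled in this section, reducing to a standard fact about homotopy colimits over categories of simplices. First I would replace $\mathcal{O}$ by a cosimplicial resolution $\mathcal{O}^\bullet$ and invoke Lemma \ref{1we} to get $\abs{w\algebra{\mathcal{O}}{\C C}}\simeq\abs{w\algebra{\mathcal{O}^\bullet}{\C C}}$, together with $\abs{w\C C}\simeq\abs{w\C C}$ trivially; then by Lemma \ref{2we} I replace the source by $\abs{\C D_\bullet}$ and the target by $\abs{fw\C C_{fc}}$, so that the forgetful map becomes $\abs{\rho'_\bullet}\colon\abs{\C D_\bullet}\to\abs{fw\C C_{fc}}$, the realization of the simplicial functor in the pull-back square \eqref{db}. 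Throughout one must check these replacements are compatible with the forgetful maps, i.e.\ that the relevant squares commute up to the natural homotopies — this is the bookkeeping part.

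Next I would compute the homotopy fiber of $\abs{\rho'_\bullet}$ over the point determined by a fibrant-cofibrant object $Y$. The key tool is Quillen's Theorem B, or rather its left-fibration/homotopy-colimit incarnation: since $\C D_\bullet$ is obtained as a (levelwise) pull-back over the inclusion $fw\C C_{fc}\subset\C C$ and all morphisms in $fw\C C_{fc}$ are trivial fibrations, the functor $\rho'_\bullet$ behaves well enough that its homotopy fiber over $Y$ is computed by the comma construction, namely $\abs{\rho'_\bullet\downarrow Y}$. (Concretely: the overcategory $\abs{(fw\C C_{fc})\downarrow Y}$ is contractible because it has a terminal object, so the inclusion of the fiber into the comma object is an equivalence; this is exactly the contractibility remark at the end of the proof of Lemma \ref{natin}.) Hence the homotopy fiber is $\abs{\rho'_\bullet\downarrow Y}$.

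Now Lemma \ref{natin} directly identifies $\abs{\rho'_\bullet\downarrow Y}$ with the simplicial set $\operad{\C V}(\mathcal{O}^\bullet,\mathtt{End}_{\C C}(Y))$. Since $\mathcal{O}^\bullet$ is a cosimplicial resolution of $\mathcal{O}$ and, by Corollary \ref{esfibrante}, $\mathtt{End}_{\C C}(Y)$ is a fibrant operad, this simplicial set is a model for the derived mapping space $\rmap_{\operad{\C V}}(\mathcal{O},\mathtt{End}_{\C C}(Y))$ in the sense of \cite[\S 4.3]{fcha}. Assembling the chain of weak equivalences gives the claimed description of the homotopy fiber.

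I expect the main obstacle to be the verification that the homotopy fiber of $\abs{\rho'_\bullet}$ is genuinely modeled by the comma object $\abs{\rho'_\bullet\downarrow Y}$ — that is, getting a clean Theorem B-type statement in the simplicial-category setting and checking the hypotheses (the relevant comma categories must have contractible nerves, which uses that morphisms in $fw\C C_{fc}$ are trivial fibrations so that the fiber does not change along them). A secondary, more routine difficulty is keeping track of base points and ensuring that the several weak equivalences from Lemmas \ref{1we}, \ref{2we}, and \ref{natin} are natural enough to fit into a single diagram of homotopy fiber sequences; this is straightforward but tedious, and is where one has to be careful that "space" here means the diagonal of a bisimplicial set and that all the constructions are homotopically meaningful despite the set-theoretic size issues flagged at the start of the section.
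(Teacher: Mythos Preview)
Your proposal is correct and follows essentially the same route as the paper: reduce to $\abs{\rho'_\bullet}$ via Lemmas \ref{1we} and \ref{2we}, identify its homotopy fiber at $Y$ with the comma nerve $\abs{\rho'_\bullet\downarrow Y}$ via a Theorem~B-type argument, and then invoke Lemma \ref{natin} to recognize this as $\operad{\C V}(\mathcal{O}^\bullet,\mathtt{End}_{\C C}(Y))\simeq\rmap_{\operad{\C V}}(\mathcal{O},\mathtt{End}_{\C C}(Y))$. The paper makes the Theorem~B step precise by citing \cite[Lemma 4.2.2]{rezkphd}, whose hypothesis (homotopy invariance of $\abs{\rho'_\bullet\downarrow Y}$ in $Y$) is verified by combining Lemma \ref{natin} with the homotopy invariance of endomorphism operads of fibrant-cofibrant objects (Proposition \ref{uno}); this is exactly the ``fiber does not change along trivial fibrations'' check you flagged as the main obstacle.
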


\begin{proof}
By Lemma \ref{natin} and the homotopy invariance of endomorphism operads of fibrant-cofibrant objects (Proposition \ref{uno}), we can apply  \cite[Lemma 4.2.2]{rezkphd} to the simplicial functor $\rho'_\bullet\colon\C{D}_\bullet\r fw\C{C}_{fc}$ in  \eqref{db}. This proves that the homotopy fiber of $|\rho'_\bullet|$ at $Y$ is $\abs{\rho'_\bullet\downarrow Y}$, since $|fw\C{C}_{fc}\downarrow Y|$ is contractible because $fw\C{C}_{fc}\downarrow Y$ has a final object, the identity map $Y\r Y$. Now Lemmas \ref{1we} and \ref{2we} complete the proof. 
\end{proof}

Mapping spaces take homotopy colimits in the first variable to homotopy limits. Hence, we deduce the following result.

\begin{cor}\label{rezkcor}
The contravariant functor $\mathcal O\mapsto \abs{w\algebra{\mathcal{O}}{\C C}}$ defined by the contravariant change of base operad functors, like $f^*$ in \eqref{qp}, takes homotopy colimits of admissible operads to homotopy limits of spaces.
\end{cor}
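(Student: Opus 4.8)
The plan is to deduce this from Theorem \ref{rezk} together with the standard fact that a derived mapping space in a model category turns homotopy colimits in its first variable into homotopy limits. First I would note that the forgetful functors $\algebra{\mathcal{O}}{\C C}\to\C C$ are \emph{strictly} compatible with the change-of-operad functors: for $f\colon\mathcal O\to\mathcal P$ the functor $f^{*}$ in \eqref{qp} is the identity on underlying objects of $\C C$. Hence $\mathcal O\mapsto\abs{w\algebra{\mathcal{O}}{\C C}}$ refines to a strict contravariant functor valued in spaces over the $\mathcal O$-independent space $\abs{w\C C}$, namely $\mathcal O\mapsto\big(\abs{w\algebra{\mathcal{O}}{\C C}}\to\abs{w\C C}\big)$, and the homotopy limits in the statement are to be formed over this fixed base. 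Since a map of spaces over a fixed base is a weak equivalence if and only if it is one on the homotopy fiber over every point, and since every object of $\C C$ is connected by a zigzag of weak equivalences to a fibrant-cofibrant one (so every path component of $\abs{w\C C}$ contains a fibrant-cofibrant object), it is enough to understand how this functor behaves after passing to homotopy fibers at fibrant-cofibrant objects $Y$.

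By Theorem \ref{rezk} the homotopy fiber of $\abs{w\algebra{\mathcal{O}}{\C C}}\to\abs{w\C C}$ at such a $Y$ is $\rmap_{\operad{\C V}}(\mathcal{O},\mathtt{End}_{\C C}(Y))$, and inspecting its proof this identification is contravariantly natural in $\mathcal O$: it is assembled from the identification $\abs{\rho'_\bullet\downarrow Y}\simeq\operad{\C V}(\mathcal{O}^\bullet,\mathtt{End}_{\C C}(Y))$ of Lemma \ref{natin} and the weak equivalences of Lemmas \ref{1we} and \ref{2we}, all of which become natural transformations once the cosimplicial resolution $\mathcal O^\bullet$ is chosen functorially. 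Thus, after passing to homotopy fibers at $Y$, our functor becomes $\mathcal O\mapsto\rmap_{\operad{\C V}}(\mathcal O,\mathtt{End}_{\C C}(Y))$, the derived mapping-space functor of the model category $\operad{\C V}$ in its first variable, evaluated at the operad $\mathtt{End}_{\C C}(Y)$, which is fibrant by Corollary \ref{esfibrante}.

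Now let $\mathcal O$ be a homotopy colimit of admissible operads $\mathcal O_{i}$; I may represent $\mathcal O$ by the colimit of a cofibrant diagram, so that $\mathcal O$ itself is cofibrant, hence admissible, and Theorem \ref{rezk} applies to $\mathcal O$ as well. Because $\mathcal O\mapsto\abs{w\algebra{\mathcal{O}}{\C C}}$ carries weak equivalences of admissible operads to weak equivalences of spaces (the relevant $f^{*}$ being part of a Quillen equivalence and preserving weak equivalences, cf.\ \eqref{qp}), the structure morphisms of the homotopy colimit induce a comparison map $\abs{w\algebra{\mathcal{O}}{\C C}}\to\operatorname{holim}_{i}\abs{w\algebra{\mathcal{O}_{i}}{\C C}}$ over $\abs{w\C C}$. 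Restriction to the homotopy fiber over a fibrant-cofibrant $Y$ preserves homotopy limits, and homotopy limits commute with homotopy fibers, so on fibers this map is the canonical comparison
\begin{equation*}
\rmap_{\operad{\C V}}\big(\operatorname{hocolim}_{i}\mathcal O_{i},\,\mathtt{End}_{\C C}(Y)\big)\To\operatorname{holim}_{i}\,\rmap_{\operad{\C V}}\big(\mathcal O_{i},\,\mathtt{End}_{\C C}(Y)\big),
\end{equation*}
which is a weak equivalence because a derived mapping space out of a cofibrantly generated model category sends homotopy colimits in the first variable to homotopy limits: one models $\operatorname{hocolim}_{i}\mathcal O_{i}$ by the colimit of a cofibrant diagram, computes $\rmap_{\operad{\C V}}$ via cosimplicial resolutions as in the proof of Lemma \ref{1we}, and invokes the Bousfield-Kan formula together with \cite[Corollary 6.4]{fcha}. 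Being a weak equivalence on all homotopy fibers over $\abs{w\C C}$, the comparison map is a weak equivalence, which is the assertion.

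The hard part, such as it is, will be bookkeeping rather than anything conceptual. One must extract from the proof of Theorem \ref{rezk} its naturality in $\mathcal O$, which calls for functorial cosimplicial resolutions so that Lemmas \ref{1we}, \ref{2we} and \ref{natin} supply genuine natural transformations; and one must keep in mind --- this is the only point that really needs attention --- that because the base $\abs{w\C C}$ does not depend on $i$, the homotopy limit in the conclusion is to be understood in the category of spaces over $\abs{w\C C}$, i.e.\ computed fiberwise. It coincides with the plain homotopy limit of spaces exactly when the nerve of the indexing category is contractible, which holds in particular for the homotopy pushouts of operads used later in the paper. The existence and coherence of the comparison cone are immediate from the (homotopical) functoriality of $\mathcal O\mapsto\abs{w\algebra{\mathcal{O}}{\C C}}$ on admissible operads and of its natural map to $\abs{w\C C}$.
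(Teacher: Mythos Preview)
Your approach is exactly the paper's: the one-line proof there simply says that mapping spaces take homotopy colimits in the first variable to homotopy limits, implicitly invoking Theorem \ref{rezk} to reduce the question to mapping spaces out of $\mathcal O$. You have filled in the details the paper omits --- naturality of Theorem \ref{rezk} in $\mathcal O$ via functorial cosimplicial resolutions, and the fiberwise check over $\abs{w\C C}$.

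Your final paragraph, however, does more than the paper: you correctly flag that the statement as written is only literally true when the indexing category has contractible nerve, since for a general diagram the comparison map lands in $\operatorname{holim}_i\abs{w\algebra{\mathcal{O}_i}{\C C}}$ computed in spaces over $\abs{w\C C}$, which differs from the plain homotopy limit by the diagonal $\abs{w\C C}\to\abs{w\C C}^{\abs{N(I)}}$. The coproduct case $\mathcal O_1\coprod^h\mathcal O_2$ already shows this: an $(\mathcal O_1\coprod\mathcal O_2)$-algebra is a single object carrying both structures, so the correct limit is the homotopy pullback over $\abs{w\C C}$, not the product. The paper's terse proof does not address this, but its later uses of the corollary (in Remark \ref{hococo}, Theorem \ref{lageo}, Theorem \ref{esfp}) are for pushouts and transfinite compositions arising from cell complexes, all with contractible nerve, so no harm is done; and Remark \ref{yayaya}, where the reduction is actually applied, is about the stack of structures on a fixed $M$, which \emph{is} a mapping space pointwise and therefore needs no such caveat.
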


\section{Stacks of algebras}\label{geometry}

In this section we place ourselves in a homotopical algebraic geometry (HAG) context $(\C V,\C V_0,\C A, \tau,\mathbf{P})$ in the sense of \cite[1.3.2.13]{hagII}. This consists, first of all, of an underlying combinatorial symmetric monoidal model category $\C V$, in the sense of \cite[Definition 4.2.6]{hmc}, and two full subcategories $\C V_0,\C A\subset\C V$ which intuitively play the role of the aisle and the heart of a $t$-structure in a triangulated category, respectively. Among the various required assumptions, the category $\comm(\C V)$ of \emph{commutative algebras} (i.e.~monoids) in $\C V$ must carry a model structure with fibrations and weak equivalences defined by the underlying morphisms in $\C V$. The model category $\aff_{\C V}=\comm(\C V)^\op$ of  \emph{affine stacks} is the opposite model category,  $\tau$ is a model pre-topology  on $\aff_{\C V}$, which induces a Grothendieck topology on $\ho(\aff_{\C V})$, and $\mathbf P$ is a class of morphisms  in $\aff_{\C V}$, which are regarded as good enough to define quotients. 

Apart from the axioms imposed in \cite[Definition 4.2.6]{hmc} to symmetric monoidal model categories, $\C V$ satisfies the strong unit axiom by \cite[Assumption 1.1.0.3]{hagII} and \cite[Lemma A.11]{htnso2}. The monoid axiom is not explicitly required in \cite{hagII}, but it is satisfied by all known examples. We will also assume that $\C V$ satisfies the monoid axiom, so it is a symmetric monoidal model category in the sense of Definition \ref{nota1}.

A \emph{simplicial presheaf} $F$ is a (contravariant) functor from affine stacks to simplicial sets,
$$F\colon \aff_{\C V}^\op\To\operatorname{Set}^{\Delta^\op},$$
i.e.~a (covariant) functor from $\comm(\C V)$.  The category $\operatorname{SPr}(\aff_{\C V})$ of simplicial presheaves carries a model structure where weak equivalences (resp.~fibrations) are pointwise weak equivalences (resp.~fibrations) of simplicial sets. In particular, a simplicial presheaf is fibrant iff its values are Kan complexes.

A \emph{stack} is a simplicial presheaf $F$ which preserves weak equivalences, finite homotopy products, and satisfies the following descent condition \cite[Corollary 1.3.2.4]{hagII}: given a commutative algebra $A$, a cosimplicial commutative algebra $B^\bullet$, and a cosimplicial map $A\r B^\bullet$ corresponding to a $\tau$-hypercover in $\aff_{\C V}$, where $A$ is regarded as a constant cosimplicial object, the induced morphism
$$F(A)\To \operatorname{holim}_{\deltan{n}\in\Delta}F(B^n)$$
is a weak equivalence of spaces.  The simplicial presheaf represented by a commutative algebra $A$, $$\rspecu (A)=\rmap_{\comm(\C V)}(A,-),$$ 
is a stack, that we call \emph{affine stack}. 
It is defined by a cosimplicial resolution of $A$ and a functorial fibrant resolution in $\comm(\C V)$. In order to turn $\rspecu$ into a Yoneda-like functor
$$\rspecu\colon\aff_\C{V}\To \operatorname{SPr}(\aff_{\C V})$$
we choose a functorial cosimplicial resolution in $\comm(\C V)$. 

The \emph{model category of stacks} $\aff_{\C V}^{\sim,\tau}$ is a left Bousfield localization of  $\operatorname{SPr}(\aff_{\C V})$ whose fibrant objects are stacks taking values in Kan complexes. The \emph{homotopy category of stacks} $\operatorname{St}(\C V,\tau)=\ho (\aff_{\C V}^{\sim,\tau})$ can be identified with the full subcategory of $\ho(\operatorname{SPr}(\aff_{\C V}))$  spanned by stacks. The homotopy limit of a diagram of stacks is the same in $\aff_{\C V}^{\sim,\tau}$ and in $\operatorname{SPr}(\aff_{\C V})$, i.e.~it can be computed as a pointwise homotopy limit in the category of spaces. 
A simplicial presheaf weakly equivalent to a stack in $\operatorname{SPr}(\aff_{\C V})$ is itself a stack. A map between stacks is a weak equivalence in $\aff_{\C V}^{\sim,\tau}$ if and only if it is a weak equivalence in $\operatorname{SPr}(\aff_{\C V})$. 
The functor $\rspecu$ above induces a full embedding
$$\rspecu\colon\ho(\aff_{\C V})\To \operatorname{St}(\C V,\tau).$$

Let $A$ be a commutative algebra in $\C V$. The category $\Mod{A}$ of $A$-modules is a model category with fibrations and weak equivalences defined by the underlying morphisms in $\C V$. In an ideal world, there would be a stack of modules, denoted by $\qcoh$, defined by 
$$\qcoh(A)=\abs{w\Mod{A}}.$$
Moreover, given a commutative algebra morphism $A\r B$, the induced map 
\begin{equation}\label{mala}
\qcoh(A)=\abs{w\Mod{A}}\To \qcoh(B)=\abs{w\Mod{B}}
\end{equation}
would be defined by the change of coefficients functor $-\otimes_AB\colon\Mod{A}\r \Mod{B}$. However, there are some issues. 

The homotopical issue is that the change of coefficients functor $-\otimes_AB$ does not preserve weak equivalences, so it cannot define a map as \eqref{mala}. This can be solved by restricting to full subcategories of cofibrant objects,
$$\qcoh(A)=\abs{w\Mod{A}_{c}},$$
since $-\otimes_AB$ is a left Quillen functor, so it preserves weak equivalences between cofibrant objects. This is not harmful since $\abs{w\Mod{A}_{c}}\simeq\abs{w\Mod{A}}$.

The categorical issue is that this constuction does not define a functor since,  given commutative algebra morphisms $A\r B\r C$ and an $A$-module $M$, the natural isomorphism $M\otimes_{A} B\otimes _{B}C\cong M\otimes_{A}C$ is not the identity. Nevertheless, these natural isomorphisms satisfy the coherence condition in \cite[D\'efinition I.56]{cmcla}, so the non-functor $A\mapsto\Mod{A}$ defines a weak presheaf of model categories, which can be strictified before restricting to weak equivalences and taking nerves, see  \cite[\S I.2.3.1]{cmcla}. The strictification of $\qcoh$ is explicitly described in  \cite[\S1.3.7]{hagII} in terms of categories of \emph{quasi-coherent modules}, hence the name $\qcoh$ of this simplicial presheaf.

The geometric issue is to check that the simplicial presheaf $\qcoh$ is a stack. This holds in any HAG context, essentially by definition, see \cite[Theorem 1.3.7.2]{hagII}.

Given an admissible operad $\mathcal O$ in $\C V$, in this section we aim at constructing a \emph{moduli stack of $\mathcal O$-algebras} $\algebrau{\mathcal O}{\C V}$. We would like that
$$\algebrau{\mathcal O}{\C V}(A)=\abs{w\algebra{\mathcal O}{\Mod{A}}},$$
and that given a commutative algebra morphism $A\r B$, the induced map 
\begin{equation*}
\algebrau{\mathcal O}{\C V}(A)=\abs{w\algebra{\mathcal O}{\Mod{A}}}\To 
\algebrau{\mathcal O}{\C V}(B)=\abs{w\algebra{\mathcal O}{\Mod{B}}}
\end{equation*}
were defined by the change of coefficients functor $-\otimes_AB$. We would also like that the forgetful functors $\algebra{\mathcal O}{\Mod{A}}\r\Mod{A}$ defined a morphism of stacks
$$\xi^{\mathcal{O}}\colon \algebrau{\mathcal{O}}{\C V}\To\qcoh.$$

In order to achieve this goal, we will face the same difficulties as for the definition of $\qcoh$ and some other specific issues. We will rely on the homotopy theory of operads developed in \cite{htnso,htnso2}.

Fist of all, in order the model category of $\mathcal O$-algebras in $\Mod{A}$ to be defined, we need to endow $\Mod{A}$ with a model $\C V$-algebra structure.

\begin{lem}\label{compatible0}
Let $A$ be a commutative algebra in $\C V$. The functor $$z_{A}=-\otimes A\colon \C V\r \Mod{A}$$ endows $\C C=\Mod{A}$ with the structure of a combinatorial symmetric model $\C V$-algebra in the sense of Definition \ref{nota2}. In addition, it satisfies the very strong unit axiom in Definition \ref{vsua}.
\end{lem}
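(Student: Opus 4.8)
The plan is to check the required structure axiom by axiom, using that $\Mod{A}$ is the category of modules over the commutative monoid $A$ in $\C V$. First I would recall that $\Mod{A}$ carries a cofibrantly generated model structure, transferred along the free-forgetful adjunction $(-\otimes A)\dashv U$, with fibrations and weak equivalences created by the forgetful functor $U\colon\Mod{A}\r\C V$; the transfer is valid because $\C V$ satisfies the monoid axiom and is cofibrantly generated with presentable sources. Combinatoriality of $\Mod{A}$ follows since $\C V$ is combinatorial and $\Mod{A}$ is again locally presentable (it is a category of algebras for an accessible monad on a locally presentable category), with generating (trivial) cofibrations the images under $-\otimes A$ of those of $\C V$, which have presentable sources.

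Next I would produce the symmetric monoidal structure on $\Mod{A}$: the tensor product is $-\otimes_A-$, the unit is $A$, and the symmetry is induced from that of $\C V$. The push-out product axiom and the unit axiom for $\Mod{A}$ are standard consequences of the corresponding axioms for $\C V$ together with the monoid axiom, because cofibrations in $\Mod{A}$ are retracts of transfinite composites of pushouts of maps of the form $(-\otimes A)$ applied to generating cofibrations of $\C V$, and tensoring such maps over $A$ with an object reduces, after forgetting to $\C V$, to tensoring in $\C V$ with the underlying object of $A$-modules. One then takes $z_A=-\otimes A$, which is strong symmetric monoidal (via the canonical isomorphisms $(X\otimes A)\otimes_A(X'\otimes A)\cong (X\otimes X')\otimes A$ and $A\cong \unit_{\C V}\otimes A$) and is left Quillen by construction, so it is a left adjoint preserving (trivial) cofibrations. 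The braiding isomorphism $\zeta$ comes for free from symmetry, and the coherence laws of Definition \ref{nota2} hold because they already hold in $\C V$ and are transported along the strong monoidal structure of $z_A$. Thus $\Mod{A}$ is a combinatorial symmetric model $\C V$-algebra; in particular $z_A$ automatically satisfies the $\mathbb I$-cofibrant axiom since it is strong monoidal and left Quillen (this is the standard criterion from \cite{htnso2}).

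Finally I would verify the very strong unit axiom of Definition \ref{vsua} for $z_A$. The tensor unit of $\Mod{A}$ is $z_A(\unit_{\C V})=A$, so the statement should reduce to a property of cofibrant replacement of $A$ as an $A$-module together with the strong unit axiom holding in $\C V$, which is part of the standing HAG assumptions by \cite[Assumption 1.1.0.3]{hagII} and \cite[Lemma A.11]{htnso2}. Concretely, taking a cofibrant replacement $q\colon Q\xrightarrow{\sim} A$ in $\Mod{A}$, one wants that tensoring with $Q$ over $A$ behaves well on weak equivalences out of $\unit_{\C V}\otimes A$-type objects; forgetting to $\C V$, $Q$ becomes a $\unit$-cofibrant (in fact cofibrant-enough) object weakly equivalent to the underlying object of $A$, and the required comparison maps become instances of the corresponding comparison maps for the very strong unit axiom in $\C V$, which hold because $\C V$ satisfies the strong unit axiom and its unit, while possibly non-cofibrant, is $\unit$-cofibrant. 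The main obstacle I anticipate is precisely this last point: making the reduction of the very strong unit axiom from $\Mod{A}$ down to $\C V$ fully rigorous, since it requires comparing a cofibrant replacement of $A$ computed inside $\Mod{A}$ with one computed inside $\C V$, and checking that the relevant diagrams of natural transformations in Definition \ref{vsua} are compatible with the forgetful functor; everything else is a routine transfer-of-structure argument.
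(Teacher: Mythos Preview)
Your argument for the model structure, combinatoriality, the push-out product axiom, and the fact that $z_A$ is strong symmetric monoidal and left Quillen is fine, though more laborious than necessary: in a HAG context all of this is packaged into \cite[Assumption 1.1.0.2]{hagII}, which the paper simply cites (together with \cite[Theorem 4.1 (2)]{ammmc} for the monoid axiom in $\Mod{A}$).

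The real divergence is in the very strong unit axiom, and here your plan has a genuine gap. You want to take a cofibrant replacement $Q\twoheadrightarrow A$ in $\Mod{A}$, forget to $\C V$, and invoke the (very) strong unit axiom there. But for an arbitrary $A$-module $X$ the map $X\otimes_A Q\to X$ cannot in general be rewritten as a tensor product in $\C V$ with a cofibrant replacement of $\unit_{\C V}$; the forgetful functor does not turn $\otimes_A$ into $\otimes$, and there is no reason for the underlying $\C V$-object of $Q$ to be $\unit$-cofibrant. So the reduction you sketch does not go through as stated, which is exactly the obstacle you flagged.

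The paper avoids this entirely. In a HAG context, \cite[Assumption 1.1.0.3]{hagII} asserts that cofibrant $A$-modules are \emph{flat}, i.e.\ $M\otimes_A-$ preserves weak equivalences for $M$ cofibrant. By Remark~\ref{setiene} (an immediate consequence of Lemma~\ref{davidwhite}), flatness of cofibrant objects implies the very strong unit axiom directly: take $M=Q$ a cofibrant replacement of the unit and apply $Q\otimes_A-$ to the identity weak equivalence. This one-line argument is what you should use. Note also that the paper derives the $\unit$-cofibrant axiom for $z_A$ from Lemma~\ref{pseudowe}, which itself rests on the very strong unit axiom just established; your claim that it is ``automatic'' from $z_A$ being strong monoidal and left Quillen is not justified by any criterion in \cite{htnso2}.
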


\begin{proof}
By \cite[Assumption 1.1.0.2]{hagII},  $\Mod{A}$ is a combinatorial model category and satisfies the push-out product axiom and the unit axiom. The monoid axiom follows from \cite[Theorem 4.1 (2)]{ammmc}. The very strong unit axiom follows from \cite[Assumption 1.1.0.3]{hagII}, see Remark \ref{setiene}.

The functor $z_A$, which is obviously strong symmetric monoidal, is a left Quillen functor since its right adjoint, the forgetful functor $\Mod{A}\r\C V$, preserves fibrations and weak equivalences. Moreover, $z_A$ satisfies the $\unit$-cofibrant axiom by Lemma \ref{pseudowe}. 
\end{proof}

The next step is to show how, given a commutative algebra morphism $A\r B$, the change of coefficients functor $-\otimes_AB\colon\Mod{A}\r \Mod{B}$ induces a functor $-\otimes_{A}B\colon \algebra{\mathcal O}{\Mod{A}}\r  \algebra{\mathcal O}{\Mod{B}}$. 

\begin{defn}
A \emph{symmetric $\C V$-algebra functor} is a lax symmetric monoidal functor between symmetric $\C V$-algebras $F\colon\C C\r\C D$ equipped with a monoidal natural isomorphism $Fz_{\C C}(X)\cong z_{\C D}(X)$. 

A \emph{symmetric $\C V$-algebra Quillen adjunction} $F\colon \C C\rightleftarrows\C D\colon G$ is a  Quillen pair between symmetric model $\C V$-algebras which is a lax-lax symmetric monoidal adjunction and such that $F$ is equipped with the structure of a symmetric $\C V$-algebra functor. In particular $F$ is strong monoidal. We assume in addition that the \emph{$\unit$-cofibrant axiom} in \cite[Definition B.6]{htnso2} is satisfied.
\end{defn}

\begin{exm}\label{otroeje}
Given a commutative algebra morphism $A\r B$ in $\C V$, the functor $-\otimes_AB\colon\Mod{A}\r\Mod{B}$ is the left adjoint in a symmetric $\C V$-algebra Quillen adjunction. The natural isomorphism $z_A(X)\otimes_AB=X\otimes A\otimes_AB\cong X\otimes B=z_B(X)$ is the obvious one. The $\unit$-cofibrant axiom follows from Lemmas \ref{pseudowe} and \ref{compatible0}. If $A\r B$ is a weak equivalence then this symmetric $\C V$-algebra Quillen adjunction is a Quillen equivalence as a consequence of \cite[Assumption 1.1.0.3]{hagII}.
\end{exm}

\begin{rem}\label{otrorem}
Given a symmetric $\C V$-algebra Quillen adjunction $F\colon \C C\rightleftarrows\C D\colon G$ and an operad $\mathcal O$ in $\C V$, there is an induced Quillen pair between categories of algebras $F\colon \algebra{\mathcal{O}}{\C C}\rightleftarrows\algebra{\mathcal{O}}{\C D}\colon G$ which overlies the previous adjunction, see \cite[Proposition 7.1]{htnso2} and its proof. The latter is a Quillen equivalence if the former is and $\mathcal O$ is admissible \cite[Theorem D.11]{htnso2}.
\end{rem}

Therefore, if $\mathcal O$ is an admissible operad, we can define a simplicial presheaf $\algebrau{\mathcal O}{\C V}$ by 
\begin{equation}\label{midi}
\algebrau{\mathcal O}{\C V}(A)=\abs{w\algebra{\mathcal O}{\Mod{A}}_{c}}.
\end{equation}
Honestly speaking, we should go through a categorical strictification process before restricting to weak equivalences between cofibrant objects and taking nerves, as in the case of $\qcoh$, but we will keep this technical issue aside so as not to overload the paper.

Now we must show that $\algebrau{\mathcal O}{\C V}$ is a stack. The most complicated property is descent. We will see that  descent for operads follows from descent for modules, which is an assumption of HAG contexts. In order to check this technical condition, we need some definitions.

\begin{defn}
Given a cosimplicial commutative algebra $A^{\bullet}$, a \emph{cosimplicial $A^{\bullet}$-module} $M^{\bullet}=(M^{n},d^{i},s^{i})$ consists of a $A^{n}$-module $M^{n}$ for each $n\geq 0$ together with coface $d^{i}\colon M^{n-1}\r M^{n}$ and codegeneracy $s^{i}\colon M^{n+1}\r M^{n}$ morphisms of $A^{n}$-modules satisfying the usual cosimplicial identities. Here $M^{n-1}$ and $M^{n+1}$ are regarded as $A^{n}$-modules via restriction of scalars along the coface $d^{i}\colon A^{n-1}\r A^{n}$ and codegeneracy $s^{i}\colon A^{n+1}\r A^{n}$ in $A^{\bullet}$, respectively, $0\leq i\leq n$, 

A cosimplicial $A^{\bullet}$-module $M^{\bullet}$ is \emph{homotopy cartesian} if the derived adjoints $M^{n-1}\otimes^{\mathbb L}_{A^{n-1}} A^{n}\r M^{n}$ and $M^{n+1}\otimes^{\mathbb L}_{A^{n+1}} A^{n}\r M^{n}$ of all coface and codegeneracy maps are weak equivalences.
\end{defn}

\begin{rem}
The category $\Mod{A^{\bullet}}$ of cosimplicial $A^{\bullet}$-modules carries (at least) two combinatorial model structures, called \emph{projective} and \emph{injective}, with the same weak equivalences, compare \cite[Proposition A.2.8.2]{htt}. A morphism $f^{\bullet}\colon M^{\bullet}\r N^{\bullet}$ of cosimplicial $A^{\bullet}$-modules is a weak equivalence if each $f^{n}\colon M^{n}\r N^{n}$ is a 
weak equivalence. Moreover $f^{\bullet}$ is a \emph{projective fibration} if each $f^{n}$ is a fibration in $\Mod{A^{n}}$ (or equivalently in $\C V$), and $f^{\bullet}$ is an \emph{injective cofibration} if each $f^{n}$ is a cofibration in $\Mod{A^{n}}$. The injective model structure is technically more convenient, and will be the only one we use. To\"en and Vezzosi use the projective model structure in \cite{hagII}, probably because the injective model structure was not available at that time. We can freely change since both model structures have the same weak equivalences. 
\end{rem}

We must consider $\mathcal O$-algebras in $\Mod{A^{\bullet}}$. For this, we need the following result.

\begin{lem}
The injective model structure and the tensor product of $A^{n}$-modules, $n\geq 0$, induce a combinatorial symmetric monoidal model structure on $\Mod{A^{\bullet}}$ satisfying the very strong unit axiom. Moreover, 
the functor $z_{A^{\bullet}}\colon \C V\r \Mod{A^{\bullet}}$, $z_{A^{\bullet}}(X)^{n}=X\otimes A^{n}$, endows $\Mod{A^{\bullet}}$ with the structure of a symmetric model  $\C V$-algebra. 
\end{lem}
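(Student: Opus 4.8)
The plan is to build the required structure on $\Mod{A^{\bullet}}$ levelwise, transporting everything from the pieces $\Mod{A^{n}}$, which we already understand by Lemma \ref{compatible0}. First I would recall that as a \emph{category} $\Mod{A^{\bullet}}$ is the category of sections of the Grothendieck (co)fibration associated to $n\mapsto\Mod{A^{n}}$ with the restriction-of-scalars transition functors; concretely an object is a family $M^{n}\in\Mod{A^{n}}$ together with the coface and codegeneracy morphisms subject to the cosimplicial identities. The tensor product is $(M^{\bullet}\otimes N^{\bullet})^{n}=M^{n}\otimes_{A^{n}}N^{n}$, the tensor unit is $A^{\bullet}$ itself (i.e.~$\unit^{n}=A^{n}$), the internal hom is computed levelwise, and the structure morphisms of the tensored object are the evident composites involving the transition maps; the symmetry and associativity constraints are inherited levelwise. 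That this assembles into a (closed) symmetric monoidal category is a routine check using that each $\Mod{A^{n}}$ is closed symmetric monoidal and that the transition functors $-\otimes_{A^{n}}A^{n+1}$ etc.~are strong symmetric monoidal.

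Next I would verify the model-categorical axioms. The injective model structure on $\Mod{A^{\bullet}}$ exists and is combinatorial (this is in the preceding remark, after \cite[Proposition A.2.8.2]{htt}), with weak equivalences and cofibrations defined levelwise. For the push-out product axiom: a levelwise (trivial) cofibration pushed-out-producted with a levelwise cofibration is again levelwise a (trivial) cofibration, because the push-out product in $\Mod{A^{\bullet}}$ is computed levelwise (tensor is levelwise, pushouts are levelwise) and each $\Mod{A^{n}}$ satisfies the push-out product axiom by Lemma \ref{compatible0}. The unit axiom is likewise levelwise. For the monoid axiom and the very strong unit axiom, I would again reduce to the levelwise statements: a morphism is a weak equivalence iff it is so at each level, and cofibrantly generated saturation classes for the relevant maps (e.g.~$\{A^{\bullet}\text{-module generating cofibrations}\}\otimes(\text{anything})$) are contained in the class of levelwise maps lying in the corresponding saturation for $\C V$; then invoke that each $\Mod{A^{n}}$ satisfies these axioms. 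For the very strong unit axiom one checks directly that cofibrant replacement of $A^{\bullet}$ can be taken levelwise and that tensoring with it is a weak equivalence at each level, appealing to Definition \ref{vsua} in $\C V$ via \cite[Assumption 1.1.0.3]{hagII} as in Lemma \ref{compatible0}.

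Finally, for the $\C V$-algebra structure: the functor $z_{A^{\bullet}}$ defined by $z_{A^{\bullet}}(X)^{n}=X\otimes A^{n}$, with coface/codegeneracy $d^{i}\otimes X$, $s^{i}\otimes X$, is visibly strong symmetric monoidal since it is $z_{A^{n}}$ at each level and the $z_{A^{n}}$ are compatible with the transition functors. It is a left Quillen functor because its right adjoint is the levelwise forgetful functor to $\C V^{\Delta}$ composed with the total-object functor, which preserves injective fibrations and levelwise weak equivalences; alternatively, $z_{A^{\bullet}}$ preserves cofibrations (levelwise, each $z_{A^{n}}$ does) and trivial cofibrations. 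The multiplication, unit, and $\zeta$ isomorphisms of Definition \ref{nota2} are assembled levelwise from those for $\Mod{A^{n}}$, and the coherence laws hold because they hold at each level. The $\unit$-cofibrant axiom for $z_{A^{\bullet}}$ follows from the levelwise statement for $z_{A^{n}}$ (Lemma \ref{compatible0}) since a cofibration $\unit_{\C V}\into X$ is sent to the levelwise map $A^{n}\into X\otimes A^{n}$, each a cofibration in $\Mod{A^{n}}$.

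I expect the only mildly delicate point to be the monoid axiom, where one must be careful that the class of levelwise-"$\C V$-monoid-axiom" maps is closed under the operations (pushout, transfinite composition, retract) used to generate the saturation, and that tensoring a generating trivial cofibration of $\Mod{A^{\bullet}}$ with an arbitrary object stays in that class at each level; this is exactly the levelwise analogue of \cite[Theorem 4.1 (2)]{ammmc} used in Lemma \ref{compatible0}, so it goes through, but it is the step that requires the most attention rather than a one-line "levelwise" remark.
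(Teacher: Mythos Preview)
Your proposal is correct and follows essentially the same approach as the paper: both reduce every axiom (push-out product, monoid, very strong unit, $\unit$-cofibrant) to the corresponding levelwise statement in $\Mod{A^{n}}$, invoking Lemma \ref{compatible0}, and both check that $z_{A^{\bullet}}$ is left Quillen via the levelwise preservation of (trivial) cofibrations by $z_{A^{n}}$. The paper's proof is simply much terser; your only unnecessary detour is worrying about \emph{generating} trivial cofibrations for the monoid axiom, since in the injective model structure all trivial cofibrations are levelwise trivial cofibrations and colimits are levelwise, so the whole saturation argument passes through $\Mod{A^{n}}$ directly without ever needing an explicit generating set.
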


\begin{proof}
The push-out product axiom, the monoid axiom, and the very strong unit axiom in $\Mod{A^{\bullet}}$ follow from the corresponding axioms in $\Mod{A^{n}}$, checked in Lemma \ref{compatible0}, since we are working with the injective model structure. Hence $\Mod{A^{\bullet}}$ is a combinatorial symmetric monoidal model category in the sense of Definition \ref{nota1}, since it is locally presentable.

The functor $z_{A^{\bullet}}$ is obviously strong symmetric monoidal. Its right adjoint sends a cosimplicial $A^\bullet$-module $M^\bullet$ to the limit $\lim_{\deltan{n}\in\Delta}M^n$ in $\C V$. By Lemma \ref{compatible0}, each $z_{A^n}$ preserves (trivial) cofibrations, hence so does $z_{A^{\bullet}}$ for the injective model structure. Finally, the $\unit$-cofibrant axiom for $z_{A^{\bullet}}$ is a consequence of the fact that this axiom is satisfied by all $z_{A^n}$, see again Lemma \ref{compatible0}.
\end{proof}

Notice that the functor $z_{A^{\bullet}}$ sends cofibrant objects to homotopy cartesian simplicial $A^{\bullet}$-modules.

Now we are ready to show that $\algebrau{\mathcal{O}}{\C V}$ is a stack, at least when $\mathcal O$ is cofibrant.

\begin{prop}\label{esestack}
Let $\mathcal{O}$ be a cofibrant operad in $\C V$.
The simplicial presheaf $\algebrau{\mathcal{O}}{\C V}$ is a stack.
\end{prop}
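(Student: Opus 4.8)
The plan is to verify the three defining properties of a stack directly from the definition recalled just before: preservation of weak equivalences, preservation of finite homotopy products, and $\tau$-hyperdescent. The first two are comparatively soft. For weak equivalences, if $A\r B$ is a weak equivalence of commutative algebras, then by Example \ref{otroeje} the change of coefficients adjunction $-\otimes_A B\colon\Mod{A}\rightleftarrows\Mod{B}$ is a symmetric $\C V$-algebra Quillen equivalence, hence by Remark \ref{otrorem} (using that $\mathcal O$ is cofibrant, so admissible) the induced adjunction on $\mathcal O$-algebras is a Quillen equivalence. Since the right adjoint preserves weak equivalences (it is the identity on underlying objects, so it preserves all of them), a Quillen equivalence whose right adjoint is a weak-equivalence-preserving functor induces a weak equivalence on classification spaces, by the discussion of $\abs{w\C M}$ at the start of Section \ref{mapping}. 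For finite homotopy products, one uses that $\Mod{A\times B}\simeq\Mod A\times\Mod B$ as model $\C V$-algebras (so $\algebra{\mathcal O}{\Mod{A\times B}}\simeq\algebra{\mathcal O}{\Mod A}\times\algebra{\mathcal O}{\Mod B}$) together with the fact that the nerve of a product of categories is the product of the nerves, and that the empty product goes to the terminal module category with terminal algebra category, whose classification space is a point.

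The substantive point is descent, and the strategy is to reduce it to descent for $\qcoh$, which holds in any HAG context by \cite[Theorem 1.3.7.2]{hagII}. Fix a $\tau$-hypercover $A\r B^\bullet$ in $\aff_{\C V}$. I would model $\algebrau{\mathcal O}{\C V}(A)$ by $\abs{w\algebra{\mathcal O}{\Mod A}_c}$ and, crucially, model the homotopy limit $\operatorname{holim}_{\deltan n\in\Delta}\algebrau{\mathcal O}{\C V}(B^n)$ by the classification space of (cofibrant) $\mathcal O$-algebras in the category $\Mod{B^\bullet}$ of cosimplicial $B^\bullet$-modules equipped with the injective model structure introduced above — that is, by showing $\operatorname{holim}_{\deltan n}\abs{w\algebra{\mathcal O}{\Mod{B^n}}_c}\simeq\abs{w\algebra{\mathcal O}{\Mod{B^\bullet}}_{\mathrm{hc},c}}$, where the subscript $\mathrm{hc}$ restricts to algebras whose underlying cosimplicial module is homotopy cartesian. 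This is the cosimplicial analogue of the strictification already used for $\qcoh$, and it is the place where the lemma just proved (that $\Mod{B^\bullet}$ is a symmetric model $\C V$-algebra for the injective structure) is needed, so that $\algebra{\mathcal O}{\Mod{B^\bullet}}$ makes sense and has a transferred model structure. Once that identification is in hand, the forgetful functor $\algebra{\mathcal O}{\Mod{B^\bullet}}\r\Mod{B^\bullet}$ and the already-established descent equivalence $\abs{w\Mod A_c}\st\sim\r\abs{w\Mod{B^\bullet}_{\mathrm{hc},c}}$ for $\qcoh$ fit into a commuting square with the forgetful maps $\xi^{\mathcal O}$, and I would conclude by comparing homotopy fibers: by Theorem \ref{rezk} the fiber of $\xi^{\mathcal O}(A)$ over a fibrant-cofibrant $A$-module $M$ is $\rmap_{\operad{\C V}}(\mathcal O,\mathtt{End}_{\Mod A}(M))$, and the fiber of the cosimplicial version over the image of $M$ is $\rmap_{\operad{\C V}}(\mathcal O,\mathtt{End}_{\Mod{B^\bullet}}(M\otimes_A B^\bullet))$; since $M\otimes_A B^\bullet$ is homotopy cartesian, $\mathtt{End}_{\Mod{B^\bullet}}(M\otimes_A B^\bullet)$ is weakly equivalent in $\operad{\C V}$ to $\mathtt{End}_{\Mod A}(M)$ (the homotopy limit over $\Delta$ of the equivalent endomorphism operads $\mathtt{End}_{\Mod{B^n}}(M\otimes_A B^n)$, each equivalent to $\mathtt{End}_{\Mod A}(M)$ by homotopy invariance of $\qcoh$-descent and Section \ref{endinv}), so the two fibers agree. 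A map of fibrations over a weak equivalence of bases inducing equivalences on all fibers is a weak equivalence, which gives descent.

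The main obstacle I anticipate is precisely the identification of $\operatorname{holim}_{\deltan n\in\Delta}\abs{w\algebra{\mathcal O}{\Mod{B^n}}_c}$ with the classification space of homotopy-cartesian cosimplicial $\mathcal O$-algebras over $B^\bullet$. This requires knowing that the injective model structure on $\algebra{\mathcal O}{\Mod{B^\bullet}}$ exists and that its fibrant objects (after restriction to homotopy-cartesian underlying modules) compute the homotopy limit — essentially a Reedy/holim argument combined with the homotopy theory of operad algebras in the cosimplicial module category, plus the fact that $\algebra{\mathcal O}{-}$ commutes appropriately with the relevant limits of model $\C V$-algebras. One must also be careful that the strictification swept under the rug for $\algebrau{\mathcal O}{\C V}(A)$ is compatible with the one for $\qcoh$, so that the forgetful maps really do assemble into a morphism of simplicial presheaves over which the fiber comparison takes place; this is the same kind of bookkeeping already acknowledged as deferred, and I would invoke \cite{cmcla} and \cite[\S1.3.7]{hagII} as for $\qcoh$. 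Everything else — preservation of weak equivalences and finite homotopy products, and the fiber computations — is a direct application of Theorem \ref{rezk}, Proposition \ref{uno}, Corollary \ref{hitech}, and the Quillen-equivalence transfer results of \cite{htnso2} already cited.
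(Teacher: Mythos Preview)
Your treatment of weak equivalences and finite homotopy products matches the paper's. For descent, however, you take a genuinely different route. The paper does not use Theorem~\ref{rezk} at all here. Instead, it argues directly at the level of derived adjunctions: the Quillen pair $\varphi_*\colon\Mod A\rightleftarrows\Mod{B^\bullet}\colon\varphi^*$ induces, via Remark~\ref{otrorem}, a Quillen pair on $\mathcal O$-algebras, and the key observation is that the derived unit and counit of the $\mathcal O$-algebra adjunction lie over those of the module adjunction (because cofibrant $\mathcal O$-algebras for cofibrant $\mathcal O$ have cofibrant underlying modules by \cite[Corollaries C.3 and D.3]{htnso2}, and fibrations are detected on underlying objects). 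Lemma~\ref{categorico} then transfers full faithfulness of $\mathbb L\varphi_*$ and the essential-image description from modules to $\mathcal O$-algebras, and \cite[Corollary B.0.8]{hagII} finishes.

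Your fiber-comparison strategy via Theorem~\ref{rezk} should also work, but it is longer and has two soft spots you correctly flag. First, the identification of $\operatorname{holim}_n\abs{w\algebra{\mathcal O}{\Mod{B^n}}_c}$ with the classification space of homotopy-cartesian $\mathcal O$-algebras in $\Mod{B^\bullet}$ is precisely the general input \cite[Corollary B.0.8]{hagII}; you do not need a bespoke Reedy argument. Second, your claim that $\mathtt{End}_{\Mod{B^\bullet}}(M\otimes_AB^\bullet)\simeq\mathtt{End}_{\Mod A}(M)$ is not just ``homotopy invariance'' from Section~\ref{endinv}: it requires the full faithfulness of $\mathbb L\varphi_*$ on modules (plus strong monoidality of $\varphi_*$ to identify $\varphi_*(M^{\otimes n})$ with $(\varphi_*M)^{\otimes n}$), i.e.\ exactly the HAG descent axiom you are trying to leverage. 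Once you invoke that, your argument goes through, but you have essentially reproved the paper's transfer of full faithfulness aritywise rather than all at once. The paper's approach buys simplicity and avoids Theorem~\ref{rezk}; yours makes the connection to the fiber sequence explicit, which is conceptually pleasant but redundant for this particular proposition.
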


In the proof of descent, we use the following technical observation.

\begin{lem}\label{categorico}
Let $F\colon \C A\rightleftarrows\C B\colon G$ be an adjoint pair $F\dashv G$. The functor $F$ is fully faithful if and only if the unit $X\r GF (X)$ is a natural isomorphism. Moreover, if $F$ is fully faithful then the essential image of $F$ consists of those objects $Y$ in $\C B$ such that the counit $FG(Y)\r Y$ is an isomorphism.
\end{lem}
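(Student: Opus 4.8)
The statement to prove is Lemma~\ref{categorico}, a standard fact about adjunctions. The plan is to verify the two equivalences by unwinding the triangle identities for the adjunction $F\dashv G$, with unit $\eta\colon \mathrm{id}_{\C A}\Rightarrow GF$ and counit $\varepsilon\colon FG\Rightarrow \mathrm{id}_{\C B}$.

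\textbf{First claim: $F$ fully faithful $\iff$ $\eta$ is a natural isomorphism.} I would argue via the adjunction isomorphism $\C B(FX,FX')\cong \C A(X,GFX')$, which is natural, and trace through which map corresponds to the action $F\colon \C A(X,X')\to \C B(FX,FX')$ of $F$ on morphisms. Chasing the bijection, the composite $\C A(X,X')\xrightarrow{F}\C B(FX,FX')\xrightarrow{\cong}\C A(X,GFX')$ is precisely postcomposition with $\eta_{X'}$. Hence $F$ is faithful (resp.\ full, resp.\ fully faithful) on the hom-set out of $X$ into $X'$ iff $(\eta_{X'})_*$ is injective (resp.\ surjective, resp.\ bijective) on $\C A(X,X')$. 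Letting $X,X'$ range over all objects: $F$ fully faithful iff $(\eta_{X'})_*\colon \C A(X,X')\to \C A(X,GFX')$ is a bijection for all $X,X'$, which by the Yoneda lemma is equivalent to $\eta_{X'}$ being an isomorphism for all $X'$.

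\textbf{Second claim: essential image.} Assume $F$ fully faithful, so $\eta$ is an isomorphism. If $Y\cong FX$ for some $X$, then one checks $\varepsilon_Y$ is an isomorphism: it suffices to treat $Y=FX$, and here the triangle identity $\varepsilon_{FX}\circ F\eta_X=\mathrm{id}_{FX}$ together with the fact that $F\eta_X$ is an isomorphism (as $\eta_X$ is) forces $\varepsilon_{FX}$ to be an isomorphism. Conversely, if $\varepsilon_Y\colon FG(Y)\to Y$ is an isomorphism, then $Y$ is isomorphic to $FG(Y)$, which lies in the image of $F$, so $Y$ is in the essential image.

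There is no real obstacle here; the only care needed is bookkeeping in the hom-set chase for the first claim, in particular verifying that the transpose of $F(g)\colon FX\to FX'$ under the adjunction is $\eta_{X'}\circ g$. This follows directly from naturality of the adjunction bijection in the first variable applied to $g\colon X\to X'$ and from the fact that the transpose of $\mathrm{id}_{FX'}$ is $\eta_{X'}$. Everything else is a formal consequence of the triangle identities. I would present the argument in this order: set up the hom-set bijection, identify the transpose of $F(g)$, deduce the first equivalence via Yoneda, then derive the essential-image statement from the triangle identity.
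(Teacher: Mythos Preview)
Your proof is correct and is the standard argument for this well-known fact. The paper actually states Lemma~\ref{categorico} without proof, presumably because it is classical, so there is nothing to compare against; your argument via the adjunction bijection, Yoneda, and the triangle identities is exactly what one would expect.
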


\begin{proof}[Proof of Proposition \ref{esestack}]

By Example \ref{otroeje} and Remark \ref{otrorem}, if $A\st{\sim}\r B$ is a weak equivalence in $\comm(\C V)$ then $-\otimes_AB\colon\algebra{\mathcal{O}}{\Mod{A}}\r \algebra{\mathcal{O}}{\Mod{B}}$ is the left adjoint of a Quillen equivalence, hence it induces a weak equivalence $$\abs{w\algebra{\mathcal{O}}{\Mod{A}}_{c}}\st{\sim}\To \abs{w\algebra{\mathcal{O}}{\Mod{B}}_{c}}.$$ This proves that $\algebrau{\mathcal{O}}{\C V}$ preserves weak equivalences.

Let $A$ and $B$ be fibrant commutative algebras in $\C V$. The product category $\Mod{A}\times \Mod{B}$ is a symmetric model $\C V$-algebra with the obvious product structure. Moreover, the natural projections induce a symmetric model $\C V$-algebra Quillen equivalence \cite[proof of Lemma 1.3.2.3 (1)]{hagII},
$$\xymatrix@C=80pt{ \Mod{A\times B}\ar@<.5ex>[r]^-{(-\otimes_{A\times B} A,-\otimes_{A\times B} B)}& \Mod{A}\times \Mod{B}.\ar@<.5ex>[l]}$$ 
Again by Remark \ref{otrorem}, we obtain an induced Quillen equivalence between categories of $\mathcal O$-algebras,
$$\xymatrix{ \algebra{\mathcal O}{\Mod{A\times B}}\ar@<.5ex>[r]& \algebra{\mathcal O}{\Mod{A}\times \Mod{B}}=\algebra{\mathcal O}{\Mod{A}}\times \algebra{\mathcal O}{\Mod{B}}.\ar@<.5ex>[l]}$$
The left adjoint gives rise to a weak equivalence
$$\abs{\algebra{\mathcal O}{\Mod{A\times B}}_{c}}\st{\sim}\To\abs{\algebra{\mathcal O}{\Mod{A}}_{c}}\times \abs{\algebra{\mathcal O}{\Mod{B}}_{c}}.$$
Therefore $\algebrau{\mathcal{O}}{\C V}$ preserves finite homotopy products.

Now, let us tackle the descent condition. Let $A$ be a commutative algebra, that we also regard as a constant cosimplicial object, $B^{\bullet}$  a cosimplicial  commutative algebra, and  $\varphi\colon A\r B^{\bullet}$ a map as  in \cite[Assumption 1.3.2.2 (3)]{hagII}. There is a symmetric $\C V$-algebra Quillen adjunction
$$\xymatrix{ \Mod{A}\ar@<.5ex>[r]^-{\varphi_{*}}& \Mod{B^{\bullet}}\ar@<.5ex>[l]^{\varphi^{*}}}$$
defined by $\varphi_{*}(M)^{n}=M\otimes_{A}B^{n}$. This is indeed a Quillen pair because we are using the injective model structure on the right. The derived adjoint pair
$$\xymatrix{ \ho\Mod{A}\ar@<.5ex>[r]^-{\mathbb L\varphi_{*}}& \ho\Mod{B^{\bullet}}\ar@<.5ex>[l]^{\mathbb R\varphi^{*}}}$$
satisfyies the following properties: $\mathbb L\varphi_{*}$ is fully faithful and any homotopy cartesian $B^{\bullet}$-module is in the essential image of $\mathbb L\varphi_{*}$ \cite[Assumption 1.3.2.2 (3)]{hagII}. These two properties can be read as properties of the unit and counit of the derived adjoint pair, see Lemma \ref{categorico}.

By Remark \ref{otrorem}, the previous symmetric $\C V$-algebra Quillen adjunction induces a Quillen adjunction
$$\xymatrix{ \algebra{\mathcal O}{\Mod{A}}\ar@<.5ex>[r]^-{\varphi_{*}}& \algebra{\mathcal O}{\Mod{B^{\bullet}}}\ar@<.5ex>[l]^{\varphi^{*}}.}$$
Moreover, the derived adjunction
$$\xymatrix{ \ho\algebra{\mathcal O}{\Mod{A}}\ar@<.5ex>[r]^-{\mathbb L\varphi_{*}}& \ho\algebra{\mathcal O}{\Mod{B^{\bullet}}}\ar@<.5ex>[l]^{\mathbb R\varphi^{*}}}$$
overlies the previous derived adjoint pair, since cofibrant $\mathcal O$-algebras in $\Mod{A}$ have an underlying cofibrant $A$-module \cite[Corollaries C.3 and D.3]{htnso2}
and fibrant $\mathcal O$-algebras in $\Mod{B^{\bullet}}$ have an underlying fibrant cosimplicial $B^{\bullet}$-module. Therefore, the unit and counit of the former derived adjunction underlie those of the latter, so they share the properties in the statement of Lemma \ref{categorico}. This, together with \cite[Corollary B.0.8]{hagII}, finishes the proof.
\end{proof}

In order to check that $\algebrau{\mathcal{O}}{\C V}$ is actually a stack for any admissible operad $\mathcal O$, we need functoriality in $\mathcal O$ for $\algebrau{\mathcal{O}}{\C V}$. Continuing with our ideal picture, given a morphism of admissible operads $f\colon \mathcal O\r \mathcal P$, we would like to have a morphism of simplicial presheaves $$\algebrau{f}{\C V}\colon \algebrau{\mathcal{P}}{\C V}\To \algebrau{\mathcal{O}}{\C V}$$ defined by the contravariant change of base operad functor $f^{*}$ in \eqref{qp},
$$\algebrau{\mathcal{P}}{\C V}(A)=|w\algebra{\mathcal{P}}{\Mod{A}}|\To  \algebrau{\mathcal{O}}{\C V}(A)=|w\algebra{\mathcal{O}}{\Mod{A}}|.$$
The functor $f^{*}$ does preserve all weak equivalences. The problem here is that, in order to define the simplicial presheaf $\algebrau{\mathcal{O}}{\C V}$, we have had to restrict to cofibrant $\mathcal O$-algebras \eqref{midi}, and $f^{*}$ does not preserve cofibrant objects.

In order to solve this problem, we thicken $\algebrau{\mathcal{O}}{\C V}$, i.e.~we give an alternative but weakly equivalent definition in terms of nerves of categories of weak equivalences between a class of objects bigger than cofibrant ones.

Let $\algebrapc{\mathcal{O}}{\Mod{A}}\subset \algebra{\mathcal{O}}{\Mod{A}}$ be the full subcategory of  $\mathcal O$-algebras whose  underlying $A$-module is pseudo-cofibrant in the sense of \cite[Definition A.1]{htnso2}, see also Remark \ref{sua} below. The definition of pseudo-cofibrant object is recalled in Remark \ref{sua} below. Cofibrant algebras over an admissible operad $\mathcal O$ have a pseudo-cofibrant underlying $A$-module by \cite[Corollary D.3]{htnso2}, i.e.~$\algebra{\mathcal{O}}{\Mod{A}}_{c}\subset \algebrapc{\mathcal{O}}{\Mod{A}}$. Moreover, this inclusion induces a weak equivalence on nerves  $|w\algebra{\mathcal{O}}{\Mod{A}}_{c}|\simeq |w\algebrapc{\mathcal{O}}{\Mod{A}}|$, since functorial cofibrant replacements define a homotopy inverse.

We can equivalently define $\algebrau{\mathcal{O}}{\C V}$ by
$$\algebrau{\mathcal{O}}{\C V}(A)=|w\algebrapc{\mathcal{O}}{\Mod{A}}|$$
since, given a commutative algebra morphism $A\r B$, the change of coefficients functor $-\otimes_AB$ preserves pseudo-cofibrant modules, see \cite[Assumption 1.1.0.2]{hagII} and \cite[Remark B.9]{htnso2}, and weak equivalences between them, see Lemma \ref{pseudowe}. 

The functor $f^{*}$ in \eqref{qp} is the identity on underlying objects, hence it yields a  morphism of simplicial presheaves $\algebrau{f}{\C V}\colon \algebrau{\mathcal{P}}{\C V}\r \algebrau{\mathcal{O}}{\C V}$ given by
$$\algebrau{\mathcal{P}}{\C V}(A)=|w\algebrapc{\mathcal{P}}{\Mod{A}}|\To  \algebrau{\mathcal{O}}{\C V}(A)=|w\algebrapc{\mathcal{O}}{\Mod{A}}|.$$
Moreover, this morphism is a weak equivalence if $f$ is a weak equivalence.

\begin{cor}
Let $\mathcal{O}$ be an admissible operad in $\C V$. The simplicial presheaf $\algebrau{\mathcal{O}}{\C V}$ is a stack.
\end{cor}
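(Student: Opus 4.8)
The plan is to deduce the general admissible case from the cofibrant case already settled in Proposition \ref{esestack}, by invoking the invariance of $\algebrau{-}{\C V}$ under weak equivalences of operads established in the paragraphs above.

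First I would choose a cofibrant resolution $q\colon \mathcal{Q}\st{\sim}\onto\mathcal{O}$ in the model category $\operad{\C V}$. The operad $\mathcal{Q}$ is admissible, since cofibrant operads are admissible by the Remark following Definition \ref{admissible}. Using the thickened description $\algebrau{\mathcal{O}}{\C V}(A)=|w\algebrapc{\mathcal{O}}{\Mod{A}}|$ together with the contravariant change-of-operad functors $f^{*}$ from \eqref{qp}, which are the identity on underlying objects, the resolution $q$ induces a morphism of simplicial presheaves $\algebrau{q}{\C V}\colon\algebrau{\mathcal{O}}{\C V}\r\algebrau{\mathcal{Q}}{\C V}$. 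As observed just before the statement, this morphism is an objectwise weak equivalence, because $q$ is a weak equivalence between admissible operads (so the change-of-operad Quillen pair \eqref{qp} is a Quillen equivalence, and $q^{*}$ preserves all weak equivalences).

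Now by Proposition \ref{esestack} the presheaf $\algebrau{\mathcal{Q}}{\C V}$ is a stack, since $\mathcal{Q}$ is cofibrant. Finally, as recalled in Section \ref{geometry}, a simplicial presheaf that is objectwise weakly equivalent in $\operatorname{SPr}(\aff_{\C V})$ to a stack is itself a stack; applying this to the weak equivalence $\algebrau{q}{\C V}$ yields that $\algebrau{\mathcal{O}}{\C V}$ is a stack.

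No genuine obstacle remains at this point: all the homotopical, categorical and descent content has been packaged into Proposition \ref{esestack} and into the preceding analysis of pseudo-cofibrant algebras, which makes the alternative definition of $\algebrau{\mathcal{O}}{\C V}$ via $\algebrapc{\mathcal{O}}{\Mod{A}}$ both functorial in $\mathcal{O}$ and levelwise weakly equivalent to \eqref{midi}. The only point worth emphasizing is that $\algebrau{q}{\C V}$ must be a weak equivalence in $\operatorname{SPr}(\aff_{\C V})$, i.e.~levelwise, and not merely in the localized model category $\aff_{\C V}^{\sim,\tau}$ --- which is exactly what the invariance statement preceding the corollary provides.
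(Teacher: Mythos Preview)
Your proposal is correct and follows essentially the same approach as the paper: take a cofibrant replacement of $\mathcal{O}$, use the induced objectwise weak equivalence $\algebrau{\mathcal{O}}{\C V}\r\algebrau{\mathcal{Q}}{\C V}$, invoke Proposition \ref{esestack} for $\mathcal{Q}$, and conclude via the fact that being a stack is invariant under objectwise weak equivalence. The paper's proof is the same argument in three lines, without the extra commentary.
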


\begin{proof}
Let $f\colon \widetilde{\mathcal O}\st{\sim}\onto \mathcal O$ be a cofibrant replacement. The morphism of simplicial presheaves
$ \algebrau{f}{\C V}\colon \algebrau{\mathcal{O}}{\C V}\r \algebrau{\widetilde{\mathcal{O}}}{\C V}$
is a weak equivalence, and $\algebrau{\widetilde{\mathcal{O}}}{\C V}$ is a stack by Proposition \ref{esestack}. Hence $\algebrau{\mathcal{O}}{\C V}$ is also a stack.
\end{proof}

\begin{rem}\label{hococo}
By Corollary \ref{rezkcor}, $\algebrau{\mathcal O }{\C V}$, regarded as a contravariant functor in $\mathcal O$, takes homotopy colimits of admissible operads to homotopy limits of stacks.
\end{rem}

For $\mathcal O$ an admissible operad, the forgetful functors $\algebra{\mathcal{O}}{\Mod{A}}\r \Mod{A}$ induce a morphism of stacks
$$\xi^{\mathcal{O}}\colon \algebrau{\mathcal{O}}{\C V}\To\qcoh.$$
Indeed, if $\mathcal O(0)$ is cofibrant in $\C V$, e.g.~if $\mathcal O$ is a cofibrant operad \cite[Corollary C.3]{htnso2}, we can use the unthickened definition of $\algebrau{\mathcal{O}}{\C V}$ to define this morphism  by the maps
$$\algebrau{\mathcal{O}}{\C V}(A)=|w\algebra{\mathcal O}{\Mod{A}}_{c}|\To\qcoh(A)=|w\Mod{A}_{c}|,$$
since  the underlying object of a cofibrant $\mathcal O$-algebra  is cofibrant \cite[Corollary C.3]{htnso2}. 
For a general $\mathcal O$, we thicken $\qcoh$ without changing its homotopy type by using the subcategories $\Modpc{A}\subset\Mod{A}$ of pseudo-cofibrant $A$-modules,
$$\qcoh(A)=|w\Modpc{A}|.$$
Then we define $\xi^{\mathcal{O}}$ by using the thickened versions of the source and the target stacks, and the maps
$$\algebrau{\mathcal{O}}{\C V}(A)=|w\algebrapc{\mathcal O}{\Mod{A}}|\To\qcoh(A)=|w\Modpc{A}_{c}|.$$
The morphism $\xi^{\mathcal{O}}$ is natural in the admissible operad $\mathcal O$. Our next aim is to study properties of this morphism.

\begin{defn}
Let $\mathcal O$ be an admissible operad in $\C V$, $A$ a commutative algebra in $\C V$, and $M$ an $A$-module. The \emph{stack of $\mathcal O$-algebra structures on $M$} is the stack over $\rspecu(A)$
$$\rmapu_{\operad{\C V}}(\mathcal{O},\mathtt{End}_{\Mod{A}}(M))$$
defined as the homotopy pull-back of $\xi^{\mathcal{O}}$ along the morphism $g\colon \rspecu(A)\rightarrow \qcoh$ represented by  $M$. 
\end{defn}

Honestly speaking, we should  take a fibrant replacement of $\qcoh$ in order the map $g$ to be defined in the category of stacks, not just in the homotopy category. However, we will allow us this kind of language.

\begin{rem}\label{overa}
For any cofibrant commutative algebra $A$ in $\C V$, $\Mod{A}$ is the base combinatorial symmetric monoidal model category of a new HAG context and $\aff_{\Mod{A}}^{\sim,\tau}$ is Quillen equivalent to the comma model category $\aff_{\C V}^{\sim,\tau}\downarrow\rspecu(A)$, see \cite[Proposition 1.3.2.10]{hagII} and its preceding paragraph. Hence, we can identify stacks over 
$\rspecu(A)$ with stacks in this new HAG context.
\end{rem}

\begin{rem}\label{vayavaya}
By Theorem \ref{rezk}, for any $A$-algebra $B$,
$$\rmapu_{\operad{\C V}}(\mathcal{O},\mathtt{End}_{\Mod{A}}(M))(B)\simeq
\rmap_{\operad{\C V}}(\mathcal{O},\mathtt{End}_{\Mod{B}}(\widetilde{M\otimes_{A}^{\mathbb L}B})),$$
where $\widetilde{N}$ denotes a fibrant-cofibrant replacement of a $B$-module $N$. This justifies the name of this stack.
\end{rem}

\begin{rem}\label{yayaya}
The stack $\rmapu_{\operad{\C V}}(\mathcal{O},\mathtt{End}_{\Mod{A}}(M))$ is a contravariant functor in the admissible operad $\mathcal O$ since $\algebrau{\mathcal{O}}{\C V}$ is a contravariant functor in $\mathcal O$ and $\xi^{\mathcal O}$ is natural. Moreover, it takes homotopy colimits of admissible operads to homotopy limits of stacks, since mapping spaces take  homotopy colimits in the first variable to homotopy limits of spaces.
\end{rem}

We will show that the stack of $\mathcal O$-algebra structures on a module $M$ is representable when $M$ is perfect in the sense of the following definition.

\begin{defn}[{\cite[Definition 1.2.3.6]{hagII}}]\label{perfecto}
Let $Y$ be an object in $\C V$. For the sake of simplicity, let us assume that $Y$ is fibrant and cofibrant. Let 
\begin{align*}
q_X\colon &QX\st{\sim}\onto X,&
r_X\colon &X\st{\sim}\into RX,
\end{align*}
be functorial cofibrant and fibrant replacements in $\C V$, respectively.
Moreover, denote internal morphism objects in $\C V$ by $\hom_{\C V}$. 

The  \emph{dual} of $Y$  is $Y^\vee=\hom_{\C V}(Y,R\unit)$. 
The object $Y$ is \emph{perfect} if the composition of the two vertical morphisms in the following diagram is a weak equivalence in $\C V$,
\begin{equation*}
\xymatrix@C=7pt{ 
Y\otimes Y^\vee\ar@{=}[r]&\hom_{\C V}(\unit,Y)\otimes\hom_{\C V}(Y,R\unit)\ar[rr]^-\sim_-{q_{\unit}^{*}\otimes\id{}}&&
\hom_{\C V}(Q\unit,Y)\otimes\hom_{\C V}(Y,R\unit)\ar[d]^-\otimes\\
&&&\hom_{\C V}(Q\unit\otimes Y,Y\otimes R\unit)\ar[d]^{(r_{Y\otimes R\unit})_{*}}\\
\hom_{\C V}(Y,Y)\ar@{=}[r]&
\hom_{\C V}(\unit\otimes Y,Y\otimes\unit)\ar[rr]^-\sim_-{\begin{array}{c}\\[-10pt]\scriptstyle\hom_{\C V}(q_{\unit}\otimes\id{Y},r_{Y\otimes R\unit}(\id{Y}\otimes r_{Y}))\end{array}}
&&\hom_{\C V}(Q\unit\otimes Y,R(Y\otimes R\unit))
}
\end{equation*}
In this case, this diagram yields an isomorphism $Y\otimes Y^{\vee}\cong \hom_{\C V}(Y,Y)$ in $\ho\C V$. An arbitrary $Y$ is perfect if a fibrant-cofibrant replacement is perfect.

If $A$ is a commutative algebra, we can replace $\C V$ with $\Mod{A}$ and $\unit$ with $A$ in the previous paragraphs. This yields the notions of dual and perfect $A$-module.
\end{defn}

\begin{thm}\label{lageo}
Let $\mathcal{O}$ be an admissible  operad in $\C V$, $A$ a commutative algebra in $\C V$ and $M$ a perfect $A$-module. The stack $\rmapu_{\operad{\C V}}(\mathcal{O},\mathtt{End}_{\Mod{A}}(M))$ is affine.
\end{thm}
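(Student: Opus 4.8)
The plan is to reduce the statement to an affineness criterion by exhibiting the stack $\rmapu_{\operad{\C V}}(\mathcal{O},\mathtt{End}_{\Mod{A}}(M))$ as $\rspecu$ of a suitable commutative algebra, or more precisely as a small homotopy limit of affine stacks, and to use the HAG-context fact that $\rspecu$ embeds fully faithfully into stacks and that affine stacks are closed under (finite, or more generally arbitrary) homotopy limits over $\rspecu(A)$. The essential geometric input is Remark \ref{vayavaya}, which identifies the $B$-points of the stack with $\rmap_{\operad{\C V}}(\mathcal{O},\mathtt{End}_{\Mod{B}}(\widetilde{M\otimes^{\mathbb L}_A B}))$, together with the fact that $M$ is perfect, so that $\mathtt{End}_{\Mod{B}}(\widetilde{M\otimes^{\mathbb L}_A B})(n)=\hom_{\Mod{B}}((M\otimes^{\mathbb L}_A B)^{\otimes n},M\otimes^{\mathbb L}_A B)$ is, in the homotopy category, $(M^\vee\otimes^{\mathbb L}_A B)^{\otimes n}\otimes^{\mathbb L}_A(M\otimes^{\mathbb L}_A B)$, i.e.\ given by a perfect $B$-module that is functorially and stably obtained by derived base change from a perfect $A$-module. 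This is what makes the arity components of the endomorphism operad "co-representable" and ultimately what forces representability of the whole mapping space.

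The key steps, in order, would be as follows. First, work over the fixed base: by Remark \ref{overa} we may replace the ambient HAG context by the one built from $\Mod{A}$, so it suffices to prove affineness of the analogous stack with $A=\unit$ and $M$ a perfect object of $\C V$ itself. Second, choose a cofibrant resolution $\widetilde{\mathcal O}\st{\sim}\onto\mathcal O$ of the operad; by the weak equivalence of Remark \ref{yayaya} (or Remark \ref{hococo}) it is harmless to assume $\mathcal O$ cofibrant, so $\mathcal O$ is a cofibrant object of a cellular model category of operads and can be built from free operads by attaching cells indexed by the generating cofibrations of $\C V$. Since the mapping-space functor $\mathcal O\mapsto\rmapu_{\operad{\C V}}(\mathcal{O},\mathtt{End}_{\Mod{A}}(M))$ turns homotopy colimits of operads into homotopy limits of stacks (Remark \ref{yayaya}), and homotopy limits over $\rspecu(A)$ of affine stacks are affine, it is enough to treat the case $\mathcal O$ free on a single object $X$ of $\C V$ (a generating cofibration source, hence presentable). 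Third, for $\mathcal O$ the free operad on $X$ placed in arity $n$, the mapping space is $\rmap_{\C V}(X,\mathtt{End}_{\Mod{B}}(\widetilde{M\otimes^{\mathbb L}_A B})(n))$, and using perfectness of $M$ this is $\rmap_{\C V}(X,(M^\vee)^{\otimes n}\otimes M\otimes^{\mathbb L}_A B)$ up to weak equivalence, which is exactly the $B$-points of the affine stack $\rspecu$ of the free commutative $A$-algebra on the "derived dual" module $\hom_{\Mod{A}}(X,(M^\vee)^{\otimes n}\otimes M)^\vee$ — here one uses again that $(M^\vee)^{\otimes n}\otimes M$ is perfect, so that mapping out of it commutes with derived base change. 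Fourth, assemble: the general $\mathcal O$ is a homotopy colimit (a transfinite cellular composition of pushouts along free operads) of such pieces, so $\rmapu_{\operad{\C V}}(\mathcal{O},\mathtt{End}_{\Mod{A}}(M))$ is a homotopy limit over $\rspecu(A)$ of affine stacks, hence affine, by the HAG-context closure properties of affine stacks under such limits (cf.\ \cite[\S1.3.7]{hagII}).

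The step I expect to be the main obstacle is the third one: making rigorous the claim that $\rmap_{\C V}(X,\,\text{(a perfect module)}\otimes^{\mathbb L}_A B)$ is co-represented, functorially in the commutative $A$-algebra $B$, by an affine stack — i.e.\ that "mapping out of a perfect module commutes with derived base change and gives $\rspecu$ of a free commutative algebra." One has to be careful that $X$ need not be dualizable, only presentable, so the correct statement is that $\hom_{\Mod{A}}(X, N)$ for $N$ perfect is again perfect up to weak equivalence is \emph{false} in general; rather, one should argue directly on the level of the bar/coequalizer presentation of $B$-points, or invoke the fact proved elsewhere in the paper that $\rmapu_{\operad{\C V}}$ of a \emph{free} operad into an endomorphism operad of a perfect module is affine because the endomorphism operad is, arity-wise, co-representable by perfect $A$-modules (here is precisely where perfectness of $M$ — and not merely its objectwise niceness — is used, since $\mathtt{End}$ involves the \emph{dual} $M^\vee$ on the source side). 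Equivalently one can phrase the whole argument via the universal property $\operad{\C V}(\mathcal O,\mathtt{End}_{\Mod{B}}(N))\cong \algebra{\mathcal O}{\Mod{B}}_{\text{on }N}$ of Corollary \ref{morfismos} and the corollary on ends, reducing to the representability of the affine scheme of $\mathcal O$-algebra structures — structure constants with relations — on a perfect module, which is the derived avatar of $\spec R$ from the introduction; the technical heart is then checking that forming this "scheme of structure constants" is compatible with derived base change $-\otimes^{\mathbb L}_A B$, which again comes down to perfectness of $M$ so that $\hom_{\Mod{A}}(M^{\otimes n},M)\otimes^{\mathbb L}_A B\simeq \hom_{\Mod{B}}((M\otimes^{\mathbb L}_A B)^{\otimes n}, M\otimes^{\mathbb L}_A B)$.
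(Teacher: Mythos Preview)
Your approach is essentially the paper's: reduce via Remark \ref{yayaya} (homotopy colimits of operads to homotopy limits of stacks) to free operads, and then use perfectness of $M$ to represent the free case by a free commutative $A$-algebra. The paper packages the reduction through Lemma \ref{horror}, which says every operad is weakly equivalent to a cell complex built from free operads on \emph{cofibrant sequences}; this is exactly your ``transfinite cellular composition'' step, and it cleanly sidesteps any worry about what the sources of generating cofibrations look like.

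Your third step is where you hesitate, and the hesitation is warranted but the resolution is simpler than the alternatives you sketch. You do not need $\hom_{\Mod{A}}(X,N)$ to be perfect, nor any dual of that object. For $\mathcal O=\mathcal F(U)$ free on a cofibrant sequence $U=\{U_n\}$, the adjunction $z_B\dashv\text{forget}$ and perfectness of $M$ (hence of $M\otimes_A^{\mathbb L}B$) give directly
\[
\rmap_{\operad{\C V}}(\mathcal F(U),\mathtt{End}_{\Mod{B}}(\widetilde{M\otimes_A B}))
\;\simeq\;
\prod_{n\geq 0}\rmap_{\Mod{B}}\bigl(U_n\otimes M^{\otimes_A n}\otimes_A M^\vee\otimes_A B,\;B\bigr),
\]
so the representing object is simply the free commutative $A$-algebra on the $A$-module $\coprod_{n\geq 0}U_n\otimes M^{\otimes_A n}\otimes_A M^\vee$. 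The point is that perfectness lets you move $M$ from the target to the source as $M^\vee$, turning a mapping space \emph{into} the endomorphism object into a mapping space \emph{into} $B$, which is what $\rspecu$ of a symmetric algebra computes. No dualizability of $U_n$ is needed; the $U_n$ stay on the source side throughout. Your final sentence (base-change compatibility of $\hom_{\Mod{A}}(M^{\otimes n},M)$) is exactly the content of this step.
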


We need a technical result in order to prove this theorem. Recall that a \emph{sequence}  is an object in the product model category $\C V^{\mathbb N}=\prod_{n\geq 0}\C V$. There is a Quillen pair
$$\xymatrix{\C V^{\mathbb N}\ar@<.5ex>[r]^-{\mathcal F}&\ar@<.5ex>[l]^-{\text{forget}}\operad{\C V}}$$
where the right adjoint is the forgetful functor $\mathcal O\mapsto\{\mathcal O(n)\}_{n\geq 0}$ and the left adjoint $\mathcal F$ is the \emph{free operad} functor \cite[\S5]{htnso}. 

\begin{lem}\label{horror}
Any operad in $\C V$ is weakly equivalent to the target of a relative cell complex from a free operad on a cofibrant sequence with respect to a set of cofibrations between free operads on cofibrant sequences.
\end{lem}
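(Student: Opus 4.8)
The plan is to replace $\mathcal O$, up to weak equivalence, by a cofibrant operad that is presented as a genuine cell complex, and then to upgrade the cells so that they become cofibrations between free operads on cofibrant sequences. Recall that the model structure on $\operad{\C V}$ is right‑transferred along the free–forgetful adjunction $\mathcal F\colon\C V^{\mathbb N}\rightleftarrows\operad{\C V}$ of the paragraph above, so it is cofibrantly generated with generating (trivial) cofibrations of the form $\mathcal F(i)$, where $i$ is a generating (trivial) cofibration of the product model category $\C V^{\mathbb N}=\prod_{n\ge 0}\C V$. Its initial object is the free operad $\mathcal F(\varnothing)$ on the initial sequence $\varnothing=(\varnothing,\varnothing,\dots)$, and the initial sequence is cofibrant; hence the initial operad is already a free operad on a cofibrant sequence. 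Applying the small object argument to factor the canonical morphism $\mathcal F(\varnothing)\to\mathcal O$ as $\mathcal F(\varnothing)\into\mathcal O'\xrightarrow{\ \sim\ }\mathcal O$ produces a cofibrant operad $\mathcal O'$ weakly equivalent to $\mathcal O$, together with a presentation of $\mathcal F(\varnothing)\into\mathcal O'$ as an honest (transfinite) relative cell complex on the generating cofibrations $\mathcal F(i)$ — a bona fide cell complex, not merely a retract of one — using only a set's worth of cells.

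It then remains to arrange that these cells are cofibrations between free operads on cofibrant sequences. A generating cofibration of $\C V^{\mathbb N}$ is, up to permuting the arities, a generating cofibration $i\colon A\to B$ of $\C V$ placed in a single arity with $\varnothing$ in the remaining arities; when $A$ (hence $B$) is cofibrant in $\C V$, this is a cofibration between cofibrant sequences, and since $\mathcal F$ is left Quillen, $\mathcal F(i)$ is then a cofibration between free operads on cofibrant sequences. So the task is to replace, cell by cell, each attached $\mathcal F(i)$ by $\mathcal F(i')$ for a cofibration $i'$ between cofibrant sequences weakly equivalent to $i$, and to carry this substitution through the transfinite tower of pushouts defining $\mathcal O'$. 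The relevant tools are that every operad occurring in the tower is a cell complex from $\mathcal F(\varnothing)$, hence cofibrant, so that each pushout along a cofibration $\mathcal F(i')$ between cofibrant operads is a homotopy pushout; that homotopy pushouts are invariant under weak equivalence; and that $\mathcal F$ carries weak equivalences between cofibrant sequences to weak equivalences between cofibrant operads (Ken Brown's lemma), which controls the comparison maps. This is precisely the sort of structural bookkeeping for cellular cofibrant operads developed in \cite{htnso,htnso2}, which I would invoke throughout.

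The hard part will be exactly this last cell‑by‑cell upgrade: keeping the attaching maps coherent as one passes to cofibrant‑sequence cells, and establishing the correct homotopy‑invariance statement for transfinite towers of pushouts along cofibrations of cofibrant objects — the subtlety being that the free operad functor does \emph{not} preserve arbitrary weak equivalences, so one cannot naively compare the improved tower with the original one arity by arity, and must instead argue within the homotopy theory of operads of \cite{htnso,htnso2}. Everything surrounding that step is formal: the existence and cofibrant generation of the transferred model structure on $\operad{\C V}$, the identification of the initial operad with the free operad on the cofibrant initial sequence, and the fact that the small object argument yields honest cell complexes. Once the cells are known to be realizable between free operads on cofibrant sequences, the statement follows at once from the cofibrant replacement constructed in the first step, with the set of cofibrations being the (set of) upgraded cells $\mathcal F(i')$.
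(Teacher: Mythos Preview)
Your overall strategy coincides with the paper's: take a cofibrant replacement of $\mathcal O$ presented as an honest cell complex on the transferred generating cofibrations $\mathcal F(i)$, note that the initial operad is $\mathcal F(\varnothing)$ on the cofibrant initial sequence, and then try to upgrade each cell so that the sequences involved become cofibrant. You also correctly flag the real obstacle: $\mathcal F$ does not preserve weak equivalences between arbitrary sequences, so one cannot simply compare $\mathcal F(\tilde A)\to\mathcal F(A)$ for an arbitrary cofibrant resolution $\tilde A\to A$ and expect the two towers to match up via homotopy pushouts.

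The gap is that you do not supply the idea that closes this. The paper does not attempt to compare the original cells directly with cofibrant-sequence cells. Instead it uses a two-step manoeuvre: first, a trick from \cite[Proposition~4.2]{htnso2} replaces the generating cofibrations by cells $\mathcal F(i)$ with $i$ between sequences that are cofibrant in every arity except possibly arity~$1$, where they are $\unit$-cofibrant (hence pseudo-cofibrant). Only then does one take cofibrant resolutions $\tilde U\overset{\sim}{\twoheadrightarrow}U$, and now the crucial point is that $\mathcal F$ \emph{does} preserve weak equivalences between pseudo-cofibrant sequences, because the free operad is built from coproducts of tensor products of the $U(n)$ and these operations behave well on pseudo-cofibrant objects under the strong unit axiom \cite[Corollary~A.14, Lemma~A.15]{htnso2}. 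The transfinite comparison is then carried out not by invoking homotopy pushouts of cofibrant operads, but by the gluing lemma in the cofibration category of operads with underlying pseudo-cofibrant sequence \cite[Proposition~C.8, Corollary~5.2]{htnso2}. Your proposed tools (Ken Brown for $\mathcal F$ on cofibrant sequences, homotopy invariance of pushouts along cofibrations between cofibrant operads) are insufficient because the original cells are not yet between objects on which $\mathcal F$ is known to be homotopical; the pseudo-cofibrant intermediate step is exactly what makes the comparison go through.
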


\begin{proof}
Any operad is weakly equivalent to a cell complex with respect to the set of generating cofibrations in $\operad{\C V}$. The initial operad is free on a cofibrant sequence: the initial sequence. Generating cofibrations are maps between free operads, actually they are free maps, see the proof of \cite[Theorem 1.1]{htnso}. The underlying sequences of these free operads need not be cofibrant. However, if the tensor unit $\unit$ is cofibrant, we can apply the trick at the beginning of the proof of \cite[Proposition 4.2]{htnso2} to use  cofibrant sequences instead. 

If the tensor unit is not cofibrant, the previous trick only allows us to use sequences $U=\{U(n)\}_{n\geq 0}$ with $U(n)$ cofibrant for $n\neq 1$ and $\unit$-cofibrant for $n=1$, see \cite[Corollary C.3]{htnso}. Such sequences can be replaced with  cofibrant sequences as follows. Let $\tilde U\st{\sim}\onto U$ be a cofibrant resolution in $\C V^{\mathbb N}$. The induced morphism $\mathcal F(\tilde U)\r\mathcal F(U)$ is a weak equivalence by \cite[Corollary A.14 and Lemma A.15]{htnso2}, since  a free operad is built from  coproducts of tensor products of the objects in the underlying sequence, see \cite[\S5]{htnso}. Finally, we can apply the gluing lemma in the cofibration category of operads with underlying pseudo-cofibrant sequence to replace $\mathcal F(U)$ with $\mathcal F(\tilde U)$, see \cite[Proposition C.8 and Corollary 5.2]{htnso2}. 
\end{proof}

\begin{proof}[Proof of Theorem \ref{lageo}]

By Remark \ref{yayaya} and Lemma \ref{horror}, it is enough to prove the theorem for $\mathcal O=\mathcal F(U)$ a free operad on a cofibrant sequence $U=\{U_{n}\}_{n\geq 0}$.

We can suppose without loss of generality that $A$ is a cofibrant commutative algebra and $M$ a fibrant-cofibrant $A$-module. We  define a stack $F$ 
over $\rspecu(A)$ in the sense of Remark \ref{overa}, as follows. Given a commutative $A$-algebra $B$, i.e.~a morphism of commutative algebras $A\r B$,
\begin{align*}
F(B)
&=
\prod_{n\geq0}\rmap_{\Mod{B}}(U_{n}\otimes M^{\otimes_A n}\otimes_{A}M^{\vee}, B).
\end{align*}
This stack is clearly represented by the free commutative $A$-algebra $C$ generated by
$$\coprod_{n\geq 0}U_{n}\otimes M^{\otimes_A n}\otimes_A M^{\vee}.$$

Notice that, by adjunction 
$$
F(B)\simeq
\rmap_{\operad{\C V}}(\mathcal{F}(U),\mathtt{End}_{\Mod{B}}(\widetilde{M\otimes_AB})),$$
where $\widetilde{M\otimes_AB}$ denotes a fibrant replacement of the cofibrant $B$-module $M\otimes_AB$, see \cite[Proposition 1.2.3.7]{hagII} and its preceding paragraphs. However,  we cannot define $F$ by the right hand side of the previous equation since it 
is not functorial in $B$.

Theorem \ref{rezk} shows that, pointwise, $\rspecu(C)\simeq F$ is the homotopy pull-back of $\xi^{\mathcal F(U)}$ along $g$. It is only left to define a map $\rspecu(C)\r \algebrau{\mathcal O }{\C V}$ realizing the pointwise weak equivalence. One can straightforwardly check that it is enough to take a map determined by an $\mathcal O$-algebra structure on the $C$-module $\widetilde{M\otimes_A C}$ corresponding to the connected component of the identity in $\rmap_{\operad{\C V}}(\mathcal{F}(U),\mathtt{End}_{\Mod{C}}(\widetilde{M\otimes_AC}))\simeq\rspecu(C)(C)=\rmap_{\comm(\C V)}(C,C)$.

\end{proof}

One application of Theorem \ref{lageo} is to obtain geometric substacks of $\algebrau{\mathcal O}{\C V}$. 
The stack $\qcoh$ is too big to have nice geometric properties. To this end, it is necessary to restrict to smaller substacks.

\begin{defn}\label{substack}
Given a stack $G$, a \emph{substack} of $G$ is a stack $F$ such that, for any $X$ in $\aff_{\C V}$, $F(X)\subset G(X)$ is a simplicial subset formed by certain connected components of $G(X)$ and the inclusions define a morphism $F\r G$. 
\end{defn}

\begin{exm}\label{1geo1}
Below we give examples of $1$-geometric substacks $F\subset \qcoh$ in different HAG contexts. In each case, we give a name for $F$ and specify the property that an $A$-module must satisfy so that the corresponding component of $\qcoh(A)\simeq|w\Mod{A}|$ lies in $F(A)$.

\begin{enumerate}
\item The substack $\underline{\operatorname{Vect}}_n$ of \emph{rank $n$ vector bundles}, $n\geq 0$, for any HAG context where the tensor unit $\unit$ is finitely presented in $\C V$ in the sense of \cite[Definition 1.2.3.1]{hagII}, and such that all smooth morphisms of commutative algebras belong to $\mathbf P$, see \cite[Corollary 1.3.7.12]{hagII}. An $A$-module $M$ is a rank $n$ vector bundle if there is a covering $\{A\r A_i\}_{i\in I}$ in $\tau$ such that each $M\otimes^{\mathbb L}_AA_i $ is weakly equivalent to the (derived) product $A_i\times^{h}\st{n}\cdots\times^{h} A_i$.

\item The substack $\perf$ of \emph{perfect modules} in the sense of Definition \ref{perfecto}, in the weak HAG context for complicial algebraic geometry \cite[2.3.2]{hagII}, see \cite[Proposition 2.3.3.1]{hagII}.

\item The substack $\perf_{[a,b]}$ of \emph{locally cellular modules of amplitude contained in a finite interval $[a,b]$} in the sense of \cite[Definition 2.3.5.2]{hagII}, in the stronger HAG context for complicial algebraic geometry \cite[2.3.4]{hagII}, see \cite[Proposition 2.3.5.4]{hagII}.
\end{enumerate}

Example (1) still holds if $\unit$ is not finitely presented but $\mathbf P$ contains all formally smooth morphisms. 
Examples (2) and (3) have versions in brave new algebraic geometry, see \cite[2.4.1]{hagII}. 
\end{exm}

\begin{defn}\label{1geo1.5}
Let $\mathcal O$ be an admissible operad in $\C V$ and  $F$ a substack of $\qcoh$. We define the \emph{restricted stack of $\mathcal O$-algebras} $\algebrau{\mathcal O}{F}$ as the following pull-back,
$$\xymatrix{
\algebrau{\mathcal O}{F}\ar[r]\ar[d]\ar@{}[rd]|-{\text{pull}}&\algebrau{\mathcal O}{\C V}\ar[d]^-{\xi^{\mathcal O}}\\
F\ar[r]_-{\text{incl.}}&\qcoh.
}$$
\end{defn}

\begin{rem}
The pull-back square in Definition \ref{1geo1.5} is a homotopy pull-back since the bottom horizontal arrow is a fibration of simplicial presheaves for obvious reasons. Therefore $\algebrau{\mathcal O}{F}$ is indeed a stack.
\end{rem}

\begin{prop}\label{1geo2}
In the conditions of Definition \ref{1geo1.5}, if  $F$ is $n$-geometric and  for any commutative algebra $A$ in $\C V$, the connected components of $F(A)$ are represented by perfect $A$-modules, then $\algebrau{\mathcal O}{F}$ is $n$-geometric.
\end{prop}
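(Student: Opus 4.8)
The plan is to show that the restricted morphism $\xi^{\mathcal O}_{F}\colon\algebrau{\mathcal O}{F}\r F$ — the base change of $\xi^{\mathcal O}$ along the inclusion $F\hookrightarrow\qcoh$ of Definition \ref{1geo1.5} — is an affine morphism, i.e.\ $(-1)$-representable, and then to conclude that $\algebrau{\mathcal O}{F}$ is $n$-geometric from the fact that $F$ is, using the stability of $n$-geometricity under representable (in particular affine) base change from \cite{hagII}.

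For the affineness of $\xi^{\mathcal O}_{F}$, I would take an arbitrary morphism $\rspecu(A)\r F$ from an affine stack and show that the homotopy fibre $\algebrau{\mathcal O}{F}\times^{h}_{F}\rspecu(A)$ is affine. Since $F$ is a substack of $\qcoh$ (Definition \ref{substack}), such a morphism classifies a point of $F(A)$, hence a connected component of $F(A)\subset\qcoh(A)\simeq\abs{w\Mod{A}}$, which by hypothesis is represented by a perfect $A$-module $M$; thus the composite $\rspecu(A)\r F\hookrightarrow\qcoh$ is the morphism $g$ represented by $M$. As $\algebrau{\mathcal O}{F}=\algebrau{\mathcal O}{\C V}\times^{h}_{\qcoh}F$ (the square of Definition \ref{1geo1.5} being a homotopy pull-back), pasting homotopy cartesian squares yields
$$\algebrau{\mathcal O}{F}\times^{h}_{F}\rspecu(A)\;\simeq\;\algebrau{\mathcal O}{\C V}\times^{h}_{\qcoh}\rspecu(A)\;=\;\rmapu_{\operad{\C V}}(\mathcal O,\mathtt{End}_{\Mod{A}}(M)),$$
which is affine by Theorem \ref{lageo}. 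Since $\rspecu(A)\r F$ was arbitrary, $\xi^{\mathcal O}_{F}$ is $(-1)$-representable.

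To finish, I would transfer the geometric data from $F$ to $\algebrau{\mathcal O}{F}$ along $\xi^{\mathcal O}_{F}$. An $n$-atlas $\coprod_{i}U_{i}\r F$ by affine schemes pulls back to $\coprod_{i}\bigl(\algebrau{\mathcal O}{F}\times^{h}_{F}U_{i}\bigr)\r\algebrau{\mathcal O}{F}$, whose source is affine by the previous paragraph and which is still an epimorphism, smooth, and $(n-1)$-representable because these properties are stable under base change; and the diagonal of $\algebrau{\mathcal O}{F}$ is $(n-1)$-representable because it is the composite of the relative diagonal of the affine morphism $\xi^{\mathcal O}_{F}$, which is affine, with the base change along $\algebrau{\mathcal O}{F}\times\algebrau{\mathcal O}{F}\r F\times F$ of the $(n-1)$-representable diagonal of $F$. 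Hence $\algebrau{\mathcal O}{F}$ admits an $n$-atlas and has $(n-1)$-representable diagonal, so it is $n$-geometric; equivalently, one simply quotes the relevant stability statement of \cite{hagII} (the case $n=-1$, where $F$ and $\xi^{\mathcal O}_{F}$ are both affine, being immediate).

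The only input that is not formal manipulation within the HAG formalism is the identification, in the second paragraph, of the homotopy fibre of $\xi^{\mathcal O}$ over a perfect module with $\rmapu_{\operad{\C V}}(\mathcal O,\mathtt{End}_{\Mod{A}}(M))$ together with the affineness of the latter — that is precisely Theorem \ref{lageo}, which in turn rests on the non-symmetric generalization of Rezk's theorem (Theorem \ref{rezk}). A secondary, routine point to keep in mind is that the strictification subtleties set aside in the constructions of $\qcoh$ and $\algebrau{\mathcal O}{\C V}$ must be handled here as they were there, so that the pull-back squares above are genuinely homotopy cartesian.
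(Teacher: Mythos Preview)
Your proof is correct and follows the same route as the paper: show that $\xi^{\mathcal O}_{F}$ is affine by identifying its homotopy fibres with the stacks of $\mathcal O$-algebra structures and invoking Theorem~\ref{lageo}, then conclude $n$-geometricity of the total space from that of $F$. The paper's proof is simply the two-line version of yours---it asserts that Theorem~\ref{lageo} makes $\xi^{\mathcal O}_{F}$ affine and then cites \cite[Proposition 1.3.3.4]{hagII} for the conclusion, whereas you spell out both the fibrewise check of affineness and the atlas/diagonal transfer argument that this proposition encodes.
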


\begin{proof}
Theorem \ref{lageo} shows that $\xi^{\mathcal O}_{F}\colon \algebrau{\mathcal O}{F}\r F$ is an affine morphism. Hence $\algebrau{\mathcal O}{F}$ is $n$-geometric by  \cite[Proposition 1.3.3.4]{hagII}.
\end{proof}

\section{Spaces and stacks of (unital) $A$-infinity algebras}\label{final}

Let $\C V$ be a symmetric monoidal model category. Assume further that $\C V$ is simplicial or complicial, i.e.~a symmetric model $\operatorname{Set}^{\Delta^{\op}}$-algebra or $\operatorname{Ch}(\Bbbk)$-algebra. Recall from Remark \ref{uass0} the canonical map $\phi\colon\mathtt{Ass}\r \mathtt{uAss}$ from the associative operad to the unital associative operad which models the forgetful functor from unital associative algebras to associative algebras. We proved in \cite{udga} that  $\phi$ is a homotopy epimorphism in $\operad{\C V}$ in the sense of the following definition, compare \cite[Remark 1.2.6.2]{hagII} and \cite[\S2]{udga}.

\begin{defn}
A map of simplicial sets $g\colon K\r L$ is a \emph{homotopy monomorphism} if it corestricts to a weak equivalence between $K$ and a subset of connected components of $L$. This is equivalent to say that the homotopy fibers of $g$ are empty or weakly contractible. 

A morphism $f\colon X\r Y$ in a model category $\C M$ is a \emph{homotopy epimorphism} if for any object $Z$ in $\C M$ the induced map $f^{*}\colon \rmap_{\C M}(Y,Z)\r  \rmap_{\C M}(X,Z)$ is a homotopy monomorphism. 

A map $f\colon X\r Y$ is a \emph{homotopy monomorphism} in $\C M$ if it is a homotopy epimorphism in $\C M^{\op}$. This is the same as saying that the derived codiagonal, which is the map $\Delta=\binom{1_{X}}{1_{X}}\colon X\r X\times_{Y}^{h}X$ to the homotopy product of $X$ with itself over $Y$, is a weak equivalence. 

The two notions of homotopy monomorphism of simplicial sets coincide.
\end{defn}

In particular, if $\C C$ is a model $\C V$-algebra and $Y$ is a fibrant-cofibrant object in $\C C$, the map
\begin{equation}\label{momo}
\phi^{*}\colon \rmap_{\operad{\C V}}(\mathtt{uAss},\mathtt{End}_{\C C}(Y))\To  \rmap_{\operad{\C V}}(\mathtt{Ass},\mathtt{End}_{\C C}(Y))
\end{equation}
is a homotopy monomorphism. 

\begin{prop}\label{momomo}
The map $|w\algebra{\mathtt{uAss}}{\C C}|\r |w\algebra{\mathtt{Ass}}{\C C}|$ induced by the forgetful functor is a homotopy monomorphism.
\end{prop}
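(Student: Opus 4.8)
The plan is to reduce the statement to the fact recorded in \eqref{momo}, that $\phi^{*}$ is a homotopy monomorphism on the relevant operad mapping spaces, by using Theorem \ref{rezk} to identify the homotopy fibers of the forgetful maps. Write
$$|w\algebra{\mathtt{uAss}}{\C C}|\;\overset{w}{\To}\;|w\algebra{\mathtt{Ass}}{\C C}|\;\overset{v}{\To}\;|w\C C|,$$
where $v$ and $v\circ w$ are the maps induced by the forgetful functors and $w$ is the map of the statement, induced by the change-of-operad functor $\phi^{*}$ of \eqref{qp}. By the reformulation of homotopy monomorphism recorded just before \eqref{momo}, it suffices to show that $\operatorname{hofib}_{e}(w)$ is empty or weakly contractible for every $0$-simplex $e$ of $|w\algebra{\mathtt{Ass}}{\C C}|$.

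First I would fix such an $e$, set $x=v(e)$, and choose a fibrant--cofibrant object $Y$ in the path component of $x$, which exists since $|w\C C_{fc}|\simeq|w\C C|$ and every object has a fibrant--cofibrant replacement. Applying the pasting law for homotopy pull-backs to the composable pair $v\circ w$, $v$ over the point $x$ gives a weak equivalence
$$\operatorname{hofib}_{e}(w)\;\simeq\;\operatorname{hofib}_{\tilde e}\bigl(\operatorname{hofib}_{x}(v\circ w)\To\operatorname{hofib}_{x}(v)\bigr),$$
where $\tilde e$ is the point of $\operatorname{hofib}_{x}(v)$ determined by $e$ and the constant path; since homotopy fibers depend only on the path component of the base point, the right-hand side may be computed over $Y$. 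By Theorem \ref{rezk}, $\operatorname{hofib}_{Y}(v\circ w)\simeq\rmap_{\operad{\C V}}(\mathtt{uAss},\mathtt{End}_{\C C}(Y))$ and $\operatorname{hofib}_{Y}(v)\simeq\rmap_{\operad{\C V}}(\mathtt{Ass},\mathtt{End}_{\C C}(Y))$.

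The step that needs care is checking that, under these identifications, the map of homotopy fibers induced by $w$ is precisely $\phi^{*}$ of \eqref{momo}. I would verify this by running the proof of Theorem \ref{rezk} functorially: fix a map of cosimplicial resolutions $\mathtt{Ass}^{\bullet}\to\mathtt{uAss}^{\bullet}$ covering $\phi$, note that Lemmas \ref{1we}, \ref{2we} and \ref{natin} and the final appeal to \cite[Lemma 4.2.2]{rezkphd} are all natural in the cosimplicial operad, and use Corollary \ref{morfismos} to see that the change-of-operad functor $\phi^{*}$ on algebras corresponds to precomposition with $\phi$ on endomorphism-operad mapping spaces. (This is the same naturality that underlies Corollary \ref{rezkcor} and Remark \ref{yayaya}.) Granting it, $\operatorname{hofib}_{e}(w)$ is weakly equivalent to a homotopy fiber of \eqref{momo}, hence empty or weakly contractible because $\phi$ is a homotopy epimorphism by \cite{udga}; therefore $w$ is a homotopy monomorphism. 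The main obstacle is precisely this naturality bookkeeping — everything else is formal — and it could be sidestepped by stating Theorem \ref{rezk} and carrying out its proof as natural in $\mathcal O$ from the outset, in which case the proposition follows at once from \eqref{momo} and the pasting law.
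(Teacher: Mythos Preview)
Your proof is correct and follows essentially the same route as the paper: the paper invokes Theorem \ref{rezk} together with Lemma \ref{porfibras}, which states that a map in a commutative triangle over a common base is a homotopy monomorphism if and only if the induced maps on homotopy fibers are, and then concludes via \eqref{momo}. Your pasting-law computation of $\operatorname{hofib}_e(w)$ is precisely the content of (the proof of) Lemma \ref{porfibras}, and your explicit discussion of the naturality of Theorem \ref{rezk} in $\mathcal O$ spells out a point the paper leaves implicit.
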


This result is a consequence of Theorem \ref{rezk} and Lemma \ref{porfibras} below. It also  follows from \cite[Theorem 5.2.3.5]{lurieha}, which actually goes further, characterizing the essential image, see Proposition \ref{lurie} below. 

\begin{lem}\label{porfibras}
Let $$\xymatrix@R=10pt{K\ar[rd]^{f}\ar[dd]_{g}&\\
&M\\
L\ar[ru]_{h}}$$
be a commutative triangle of simplicial sets. Denote $F_{f,x}$ and $F_{h,x}$ the homotopy fibers of $f$ and $h$ at $x\in M_{0}$, respectively. The following statements are equivalent:
\begin{enumerate}
\item $g$ is a homotopy monomorphism
\item For any $x\in M_{0}$ the  map $g_{x}\colon F_{f,x}\r F_{h,x}$ induced by $g$ is a homotopy monomorphism. 
\end{enumerate}
\end{lem}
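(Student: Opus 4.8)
The plan is to reduce everything to strict fibers of Kan fibrations and then to observe that, in those terms, conditions (1) and (2) become literally the same statement. The guiding principle is that, for a vertex $y\in L_0$ lying over $x=h(y)\in M_0$, there is a homotopy fiber sequence $F_{g,y}\to F_{f,x}\to F_{h,x}$, so that the homotopy fiber of $g_x$ over $y$ is precisely the homotopy fiber of $g$ over $y$; rather than invoke this directly I would make it transparent through an explicit model.

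First I would replace the triangle by a weakly equivalent one in which all three maps are fibrations. Factor $h$ as $L\xrightarrow{\sim}\bar L\xrightarrow{\bar h}M$ with $\bar h$ a fibration, then factor the composite $K\to L\to\bar L$ as $K\xrightarrow{\sim}\bar K\xrightarrow{\tilde g}\bar L$ with $\tilde g$ a fibration, so that $\bar f:=\bar h\tilde g$ is a fibration and the evident squares relating $(g,f)$ to $(\tilde g,\bar f)$, and $h$ to $\bar h$, commute with vertical weak equivalences. Since homotopy fibers are invariant under such replacements, one gets $F_{f,x}\simeq\bar f^{-1}(x)$ and $F_{h,x}\simeq\bar h^{-1}(x)$, the homotopy fibers of $g$ are modeled by the strict fibers of $\tilde g$, and $g_x$ is modeled by the restriction $\tilde g_x\colon\bar f^{-1}(x)=\tilde g^{-1}(\bar h^{-1}(x))\to\bar h^{-1}(x)$, which is again a fibration, being a pullback of $\tilde g$ along $\bar h^{-1}(x)\hookrightarrow\bar L$.

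Then I would unwind both conditions. A map of simplicial sets is a homotopy monomorphism iff all of its homotopy fibers at vertices are empty or weakly contractible, and for a fibration this just says all strict fibers over vertices are empty or weakly contractible (equivalently, over one vertex in each path component). Hence (1) amounts to: $\tilde g^{-1}(y)$ is empty or contractible for every $y\in\bar L_0$. On the other hand, $g_x$ is a homotopy monomorphism iff $\tilde g_x^{-1}(y)=\tilde g^{-1}(y)$ is empty or contractible for every vertex $y$ of $\bar h^{-1}(x)$; ranging over all $x\in M_0$ and using that $\bar L_0=\bigsqcup_{x\in M_0}(\bar h^{-1}(x))_0$ (each vertex of $\bar L$ lies over a unique vertex of $M$), condition (2) becomes exactly: $\tilde g^{-1}(y)$ is empty or contractible for every $y\in\bar L_0$. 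So (1)$\Leftrightarrow$(2).

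The step I expect to demand the most care is verifying that the map $g_x$ ``induced by $g$'' on homotopy fibers is genuinely modeled by the restriction $\tilde g_x$ of $\tilde g$, i.e.\ that the identifications $F_{f,x}\simeq\bar f^{-1}(x)$ and $F_{h,x}\simeq\bar h^{-1}(x)$ are compatible with the maps induced by $g$ and by $\tilde g$. This is where one must distinguish ``up to homotopy'' from strict equalities; it reduces to the fact that all maps into $M$ in sight are fibrations, so the strict pullbacks along $\{x\}\hookrightarrow M$ compute homotopy pullbacks and the comparison square of fibrations over $M$ induces the correct map on fibers. Everything else — invariance of homotopy fibers under weak equivalences, constancy of the fiber of a fibration along a path component, and detection of emptiness and contractibility on vertices — is standard.
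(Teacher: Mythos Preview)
Your proof is correct and follows essentially the same idea as the paper's: both arguments boil down to the identification of the homotopy fiber of $g_x$ at a point $y\in F_{h,x}$ with the homotopy fiber of $g$ at the image of $y$ in $L$. The paper states this more abstractly, observing that
\[
\xymatrix{F_{f,x}\ar[r]\ar[d]_{g_x}&K\ar[d]^{g}\\ F_{h,x}\ar[r]&L}
\]
is a homotopy pullback and invoking the general fact that parallel arrows in a homotopy pullback have the same homotopy fibers; you instead carry out an explicit fibrant replacement so that this identification becomes the literal equality $\tilde g_x^{-1}(y)=\tilde g^{-1}(y)$ of strict fibers, and then the equivalence of (1) and (2) is simply the partition $\bar L_0=\bigsqcup_{x\in M_0}(\bar h^{-1}(x))_0$.
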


\begin{proof}
The square
$$\xymatrix{F_{f,x}\ar[r]\ar[d]_{g_{x}}&K\ar[d]^{g}\\
F_{h,x}\ar[r]&L}$$
is a homotopy pull-back. This lemma follows from the fact that parallel arrows in a homotopy pull-back have essentially the same homotopy  fibers. Let us spell out what this means in this case. We are interested in the vertical arrows. The homotopy fiber of $g_{x}$ at a vertex $y$ of $F_{h,x}$ coincides with the homotopy fiber of $g$ at the image of $y$ along $F_{h,x}\r L$. Moreover, the homotopy fiber of $g$ at a vertex $z$ of $L$ coincides with the homotopy fiber of $g_{h(z)}$ at any vertex $y$ of $F_{h,h(z)}$ which maps to the same component as $z$ in $L$.
\end{proof}

The image of the injective map $\pi_{0}|w\algebra{\mathtt{uAss}}{\C C}|\hookrightarrow \pi_{0}|w\algebra{\mathtt{Ass}}{\C C}|$ has a friendly characterization.

\begin{defn}\label{qu}
An  associative algebra $X$ in $\C C$ with underlying fibrant-cofibrant object is \emph{quasi-unital} if
there exists a map $u\colon \tilde\unit\r X$, where $q_{\unit}\colon \tilde\unit\st{\sim}\onto \unit$ is a cofibrant replacement of the tensor unit, such that the  maps 
$$\begin{array}{c}
\tilde\unit\otimes X\st{u\otimes X}\To X\otimes X\st{\text{mult.}}\To X,\\
X\otimes \tilde\unit\st{X\otimes u}\To X\otimes X\st{\text{mult.}}\To X,
\end{array}$$
are homotopic to $\tilde\unit\otimes X\mathop{\To}\limits^{q_{\unit}\otimes X}_{\sim}\unit\otimes X\cong X$ and $X\otimes\tilde\unit\mathop{\To}\limits^{X\otimes q_{\unit}}_{\sim}\unit\otimes X\cong X$, respectively. 

If the associative algebra $X$ does not have an underlying fibrant-cofibrant object, we say that $X$ is quasi-unital if a (and hence any) fibrant-cofibrant replacement of $X$ in $\algebra{\mathtt{Ass}}{\C C}$ is quasi-unital. Such a fibrant-cofibrant replacement has an underlying fibrant-cofibrant object in $\C C$ by \cite[Corollary D.3]{htnso2}.
\end{defn}

\begin{prop}\label{lurie}
The image of $\pi_{0}|w\algebra{\mathtt{uAss}}{\C C}|\hookrightarrow \pi_{0}|w\algebra{\mathtt{Ass}}{\C C}|$ consists of the connected components of quasi-unital associative algebras.
\end{prop}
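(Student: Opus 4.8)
The plan is to reduce the statement to Lurie's \cite[Theorem 5.2.3.5]{lurieha}. First, apply Theorem \ref{rezk} together with the homotopy pull-back square exhibited in the proof of Lemma \ref{porfibras} to the triangle of forgetful functors $\algebra{\mathtt{uAss}}{\C C}\to\algebra{\mathtt{Ass}}{\C C}\to\C C$: for a fibrant-cofibrant object $Y$ of $\C C$ this presents the map $\phi^{*}$ of \eqref{momo}, $\rmap_{\operad{\C V}}(\mathtt{uAss},\mathtt{End}_{\C C}(Y))\to\rmap_{\operad{\C V}}(\mathtt{Ass},\mathtt{End}_{\C C}(Y))$, as the homotopy fiber at $Y$ of $|w\algebra{\mathtt{uAss}}{\C C}|\to|w\algebra{\mathtt{Ass}}{\C C}|$. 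Every class of $\pi_{0}|w\algebra{\mathtt{Ass}}{\C C}|$ is represented by an associative algebra $X$ whose underlying object $Y$ is fibrant-cofibrant, and such an $X$ corresponds to a point $\xi\in\pi_{0}\rmap_{\operad{\C V}}(\mathtt{Ass},\mathtt{End}_{\C C}(Y))$ mapping to the class of $X$. Since the square is a homotopy pull-back and the class of $X$ lies over the component of $Y$ in $|w\C C|$, the class of $X$ lies in the image of $\pi_{0}|w\algebra{\mathtt{uAss}}{\C C}|$ if and only if $\xi$ lies in the image of $\pi_{0}\phi^{*}$; and, directly from Definition \ref{qu}, $X$ is quasi-unital if and only if $\xi$ is. So it suffices to prove that $\xi$ lies in the image of $\pi_{0}\phi^{*}$ exactly when $\xi$ is quasi-unital.

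One implication is immediate. If $X$ underlies a unital associative algebra $X'$, which we may take to be fibrant-cofibrant so that its underlying fibrant-cofibrant object is $Y$, then the nullary structure morphism of $X'$ is, after the canonical identifications, a map $\unit_{\C C}\to Y$; precomposing it with $q_{\unit}\colon\tilde\unit\stackrel{\sim}{\to}\unit_{\C C}$ yields a map $u\colon\tilde\unit\to Y$ for which the two composites of Definition \ref{qu} are literally equal (hence in particular homotopic) to the canonical maps, by the unit equations of a $\mathtt{uAss}$-algebra. Thus $X$ is quasi-unital. The converse — that a quasi-unital $\xi$ lies in the image of $\pi_{0}\phi^{*}$ — is the substance of the proposition, and is where Lurie's theorem enters; note that Proposition \ref{momomo} already guarantees $\pi_{0}\phi^{*}$ is injective, so ``the image'' is a well-defined union of components.

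For the converse I would pass to the $\infty$-categorical setting. Let $\mathcal M$ be the monoidal $\infty$-category presented by $\C C$ (recall $\C V$, hence $\C C$, is simplicial or complicial in this section). By a rectification result for non-symmetric operads in $\C V$ and their algebras, in the spirit of the comparison theorems of \cite{htnso2}, the fibrant operad $\mathtt{End}_{\C C}(Y)$ presents the endomorphism $\infty$-operad of $Y\in\mathcal M$, and $|w\algebra{\mathtt{uAss}}{\C C}|\to|w\algebra{\mathtt{Ass}}{\C C}|$ models the forgetful map $\mathrm{Alg}(\mathcal M)^{\simeq}\to\mathrm{Alg}^{\mathrm{nu}}(\mathcal M)^{\simeq}$ on maximal subgroupoids, so that $\xi$ becomes an object of $\mathrm{Alg}^{\mathrm{nu}}(\mathcal M)$. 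One then checks that $\xi$ satisfies Definition \ref{qu} precisely when it is quasi-unital in Lurie's sense, the point being that Definition \ref{qu} records exactly the existence of a two-sided homotopy unit. Lurie's \cite[Theorem 5.2.3.5]{lurieha} identifies the essential image of $\mathrm{Alg}(\mathcal M)\to\mathrm{Alg}^{\mathrm{nu}}(\mathcal M)$ with the quasi-unital non-unital algebras, which closes the argument. I expect the main obstacle to be exactly this last step: verifying that the strict operadic mapping spaces genuinely compute the $\infty$-operadic algebra spaces over $\mathcal M$, and matching Definition \ref{qu} with Lurie's quasi-unit without losing coherence. The reason for invoking Lurie rather than arguing by hand is precisely that his theorem promotes the incoherent unit datum of Definition \ref{qu} to a coherent unital structure; there is no unitalization functor $\unit\oplus(-)$ available to perform this directly when $\C C$ is not additive, as happens when $\C V$ is simplicial.
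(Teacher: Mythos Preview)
Your proposal and the paper agree on the essential point: the proposition is not proved from scratch but is deduced from Lurie's \cite[Theorem 5.2.3.5]{lurieha}. The paper's own ``proof'' is a single sentence to this effect, together with the remark that no elementary argument is known outside special cases such as $\C V=\C C=\operatorname{Ch}(\Bbbk)$ over a field. So at the level of strategy you are aligned with the paper, and your honest acknowledgment that the comparison between the model-categorical classifying spaces and Lurie's $\infty$-categorical $\mathrm{Alg}(\mathcal M)^{\simeq}\to\mathrm{Alg}^{\mathrm{nu}}(\mathcal M)^{\simeq}$ is the real content is exactly right; the paper glosses over this just as you do.

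One structural observation: your reduction to the fibers via Theorem~\ref{rezk} and Lemma~\ref{porfibras} is correct (indeed, the homotopy pull-back square shows that the homotopy fiber of $g$ at $X$ agrees with that of $g_Y$ at $\xi$, so the $\pi_0$-image conditions match), but it is superfluous for the purpose at hand. In your final paragraph you end up invoking Lurie on the total spaces $|w\algebra{\mathtt{uAss}}{\C C}|\to|w\algebra{\mathtt{Ass}}{\C C}|$ anyway, which is precisely what the paper does directly. The paper actually runs the logic in the \emph{opposite} order: it first establishes Proposition~\ref{lurie} by citing Lurie, and only afterwards deduces the fiberwise statement (Corollary~\ref{luriecor}) from it together with Theorem~\ref{rezk}. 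Your detour through the fibers would be the natural route if one had an independent proof of the fiberwise statement, but since Lurie's theorem already operates at the level of the full algebra categories, there is nothing gained by first descending to fibers and then climbing back up.
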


This proposition follows from \cite[Theorem 5.2.3.5]{lurieha}, which is a deep and complicated result. We have not found any elementary proof for this proposition. There is an elementary proof in case $\C V=\C C=\operatorname{Ch}(\Bbbk)$ is the category of chain complexes over a field $\Bbbk$ and $z$ is the identity functor. This proof uses the existence of minimal models for $A$-infinity algebras, compare \cite[Remark 6.8]{htnso2}. The result for $\Bbbk$ an arbitrary commutative ring, first proved in \cite{uainfcat}, is already very complicated.

We obtain as a corollary a similar characterization of the image of $\pi_{0}\eqref{momo}$. Recall that vertices in $\rmap_{\operad{\C V}}(\mathtt{Ass},\mathtt{End}_{\C C}(Y))$ do not correspond to associative algebra structures on $Y$, but to $A$-infinity algebra structures, i.e.~algebra structures over a cofibrant resolution $\mathtt{A}_{\infty}\st{\sim}\onto\mathtt{Ass}$ in $\operad{\C V}$ of the associative operad $\mathtt{Ass}$.

\begin{cor}\label{luriecor}
Let $Y$ be a fibrant-cofibrant objetc in $\C C$. 
The image of the injective map $\pi_{0}\rmap_{\operad{\C V}}(\mathtt{uAss},\mathtt{End}_{\C C}(Y))\hookrightarrow \pi_{0} \rmap_{\operad{\C V}}(\mathtt{Ass},\mathtt{End}_{\C C}(Y))$ consists of the connected components of $A$-infinity algebra structures on $Y$ which are weakly equivalent in $\algebra{\mathtt{A}_{\infty}}{\C C}$ to a quasi-unital associative algebra.
\end{cor}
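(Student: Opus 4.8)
The plan is to identify \eqref{momo} as the map induced on homotopy fibers by the forgetful map $g\colon|w\algebra{\mathtt{uAss}}{\C C}|\r|w\algebra{\mathtt{Ass}}{\C C}|$, and then to transport the description of the image of $\pi_{0}(g)$ furnished by Proposition \ref{lurie} across this identification. Concretely, the forgetful functors organise into a commutative triangle over $|w\C C|$ with the other two vertices $|w\algebra{\mathtt{uAss}}{\C C}|$ and $|w\algebra{\mathtt{Ass}}{\C C}|$: forgetting the unit and then passing to the underlying object of $\C C$ is the same as passing to the underlying object directly. Since $\mathtt{uAss}$ and $\mathtt{Ass}$ are admissible, Theorem \ref{rezk} identifies the homotopy fibers at $Y$ of the two maps to $|w\C C|$ with $\rmap_{\operad{\C V}}(\mathtt{uAss},\mathtt{End}_{\C C}(Y))$ and $\rmap_{\operad{\C V}}(\mathtt{Ass},\mathtt{End}_{\C C}(Y))$, respectively, and — exactly as already used in the proof of Proposition \ref{momomo} — the induced map between these homotopy fibers is \eqref{momo}.

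Next I would invoke the computation of homotopy fibers carried out inside the proof of Lemma \ref{porfibras}: in the homotopy pull-back square there, the homotopy fiber of $g_{x}$ at a vertex $y$ coincides with the homotopy fiber of $g$ at the image of $y$. Applied in our situation (with $K=|w\algebra{\mathtt{uAss}}{\C C}|$, $L=|w\algebra{\mathtt{Ass}}{\C C}|$, $M=|w\C C|$, $x=Y$), this says that for a vertex $\alpha$ of $\rmap_{\operad{\C V}}(\mathtt{Ass},\mathtt{End}_{\C C}(Y))$ the homotopy fiber of \eqref{momo} over $\alpha$ agrees with the homotopy fiber of $g$ over the image $\bar\alpha$ of $\alpha$ in $|w\algebra{\mathtt{Ass}}{\C C}|$. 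Hence $\alpha$ lies in the image of $\pi_{0}\eqref{momo}$ if and only if the homotopy fiber of \eqref{momo} over $\alpha$ is non-empty, if and only if the homotopy fiber of $g$ over $\bar\alpha$ is non-empty, if and only if $\bar\alpha$ lies in the image of $\pi_{0}|w\algebra{\mathtt{uAss}}{\C C}|\r\pi_{0}|w\algebra{\mathtt{Ass}}{\C C}|$.

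Finally, by Proposition \ref{lurie} this last image consists precisely of the components of quasi-unital associative algebras, and it only remains to return to $A$-infinity language. A vertex $\alpha$ of $\rmap_{\operad{\C V}}(\mathtt{Ass},\mathtt{End}_{\C C}(Y))$ is an $A$-infinity structure on $Y$, i.e.~a structure over a cofibrant resolution $\mathtt{A}_{\infty}\st{\sim}\onto\mathtt{Ass}$; its image $\bar\alpha$ in $\pi_{0}|w\algebra{\mathtt{Ass}}{\C C}|\cong\pi_{0}|w\algebra{\mathtt{A}_{\infty}}{\C C}|$ — the two nerves being identified through the Quillen equivalence induced by $\mathtt{A}_{\infty}\st{\sim}\onto\mathtt{Ass}$ — is the weak equivalence class of $(Y,\alpha)$ in $\algebra{\mathtt{A}_{\infty}}{\C C}$. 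Using that quasi-unitality of an associative algebra is invariant under weak equivalence (Definition \ref{qu}), the condition ``$\bar\alpha$ is the component of a quasi-unital associative algebra'' becomes ``$(Y,\alpha)$ is weakly equivalent in $\algebra{\mathtt{A}_{\infty}}{\C C}$ to a quasi-unital associative algebra'', which is the assertion of the corollary.

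The step I expect to require the most care is the second one: making it precise that the square relating the two mapping spaces to the two classification spaces is a homotopy pull-back and that its parallel legs have the same homotopy fibers, so that non-emptiness of the fiber of \eqref{momo} over $\alpha$ is detected by the fiber of $g$ over $\bar\alpha$. Everything else is bookkeeping with the identifications already set up for Theorem \ref{rezk} and Proposition \ref{momomo}, together with a direct appeal to Proposition \ref{lurie}.
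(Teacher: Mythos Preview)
Your proposal is correct and follows essentially the same route as the paper. The paper's proof is stated as a one-line remark invoking Proposition \ref{lurie}, Theorem \ref{rezk}, and ``an elementary computation with the low-dimensional part of the long homotopy exact sequence of a homotopy fibration''; your argument unpacks exactly this, replacing the long exact sequence language by the equivalent observation (already contained in the proof of Lemma \ref{porfibras}) that in the homotopy pull-back square the fibers of $g_{Y}$ and of $g$ agree, so that non-emptiness of one detects the image on $\pi_{0}$ of the other.
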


This result follows from Proposition \ref{lurie}, Theorem \ref{rezk}, and an elementary computation with the low-dimensional part of the long homotopy exact sequence of a homotopy fibration.

\begin{rem}\label{estricto}
Since we can strictify algebras over cofibrant resolutions of admissible operads by using the Quillen equivalence \eqref{qp}, 
Corollary \ref{luriecor} gives a postive answer to the question raised in \cite[Remark 7.5]{htnso2}.
\end{rem}

Quasi-unital associative algebras can be characterized in terms of operads.

\begin{defn}\label{assqu}
Given an object $U$ in $\C V$ and an integer $m\geq 0$, denote $U[m]$ the sequence of objects in $\C V$ whose $m^{\text{th}}$ term is $U$ and whose other terms are the initial object $\varnothing$.  This construction defines a functor $\C V\r\C V^{\mathbb N}$, left adjoint to $\{V(n)\}_{n\geq 0}\mapsto V(m)$. 

Let $q_{\unit}\colon \tilde\unit\st{\sim}\onto \unit$ be a cofibrant replacement of the tensor unit. Choose a cylinder for $\tilde \unit$, which is a factorization of the folding map
$$\tilde \unit\coprod\tilde \unit\st{i}\into C\mathop{\onto}^{p}_{\sim} \tilde \unit.$$

The operad $\mathtt{Ass}^{\operatorname{qu}}$ is defined as the following push-out in $\operad{\C V}$,
$$\xymatrix@C=80pt{
\mathcal F((\tilde \unit\coprod\tilde \unit)[1])\coprod\mathcal F((\tilde \unit\coprod\tilde \unit)[1])\ar@{>->}[r]^-{\mathcal F(i[1])\coprod \mathcal F(i[1])}\ar[d]_{(c_{1},c_{2})}\ar@{}[rd]|{\text{push}}& \mathcal F(C[1])\coprod\mathcal F(C[1])\ar[d]\\
\mathtt{Ass}\coprod\mathcal F(\tilde\unit[0])\ar@{>->}[r]&\mathtt{Ass}^{\operatorname{qu}}
}$$
The morphism $c_{1}$ is defined by a map $(c_{11},c_{12})\colon \tilde \unit\coprod\tilde\unit\r (\mathtt{Ass}\coprod\mathcal F(\tilde\unit[0]))(1)$. The map $c_{11}$ is the composite
$$\tilde \unit \mathop{\onto}^{q_{\unit}}_{\sim}\unit=\mathtt{Ass}(1)\To (\mathtt{Ass}\coprod\mathcal F(\tilde\unit[0]))(1),$$
where the second map is given by the inclusion of the first factor of the coproduct. The map $c_{12}$ is the composite
$$\xy
(-6.3,0)*{\tilde\unit\cong \unit\otimes\tilde\unit=\mathtt{Ass}(2)\otimes \mathcal F(\tilde\unit[0])(0)}="a",
(0,-10)*+{(\mathtt{Ass}\coprod\mathcal F(\tilde\unit[0]))(2)\otimes
(\mathtt{Ass}\coprod\mathcal F(\tilde\unit[0]))(0)}="b",
(0,-20)*+{(\mathtt{Ass}\coprod\mathcal F(\tilde\unit[0]))(1).}="c"
\ar(0,-3);"b"
\ar"b";"c"^{\circ_{1}}
\endxy$$
Here the first arrow is given by the tensor product of the inclusions of the factors of the coproduct. Similarly $c_{2}$ is defined by a map $(c_{21},c_{22})$ such that $c_{21}=c_{11}$ and $c_{22}$ is defined like $c_{12}$ replacing $\circ_{1}$ with $\circ_{2}$.
\end{defn}

Since $\tilde\unit$ is cofibrant, the inclusion of the first factor $\mathtt{Ass}\into\mathtt{Ass}\coprod\mathcal F(\tilde\unit[0])$ is a cofibration, and so is the composite $\mathtt{Ass}\into \mathtt{Ass}^{\operatorname{qu}}$. In particular, $\mathtt{Ass}^{\operatorname{qu}}$ is admissible by \cite[Corollary C.2]{htnso2}. More precisely, $\mathtt{Ass}^{\operatorname{qu}}(0)$ is cofibrant and $\mathtt{Ass}^{\operatorname{qu}}(n)$ is $\unit$-cofibrant for all $n>0$.

\begin{lem}\label{yiyiyi}
Let $X$ be an associative algebra in $\C C$ with underlying fibrant-cofibrant object. Then $X$ is quasi-unital if and only if the map $\mathtt{Ass}\r\mathtt{End}_{\C C}(X)$ defining the associative algebra structure factors as $\mathtt{Ass}\into \mathtt{Ass}^{\operatorname{qu}}\r\mathtt{End}_{\C C}(X)$.
\end{lem}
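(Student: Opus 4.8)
The plan is to unwind the explicit push-out presentation of $\mathtt{Ass}^{\operatorname{qu}}$ in Definition \ref{assqu} and translate a morphism $\mathtt{Ass}^{\operatorname{qu}}\r\mathtt{End}_{\C C}(X)$ extending the structure map $\mathtt{Ass}\r\mathtt{End}_{\C C}(X)$ into exactly the data of a quasi-unit together with homotopies as in Definition \ref{qu}. The key point is the universal property of the push-out in $\operad{\C V}$ together with the free–forgetful adjunction $\mathcal F\colon \C V^{\mathbb N}\rightleftarrows\operad{\C V}$ and the adjunction $U\mapsto U[m]$, $V\mapsto V(m)$ of Definition \ref{assqu}.

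First I would argue that a morphism out of $\mathtt{Ass}\coprod\mathcal F(\tilde\unit[0])$ extending the given $\mathtt{Ass}\r\mathtt{End}_{\C C}(X)$ is, by the coproduct property and the two adjunctions, precisely a map $\tilde\unit\r\mathtt{End}_{\C C}(X)(0)=\hom_{\C C}(\unit,X)$, i.e.~(since $\tilde\unit$ is cofibrant and $X$ is fibrant-cofibrant, using the enrichment) a map $u\colon\tilde\unit\r X$ in $\C C$. Next, by the push-out property, extending such a map to $\mathtt{Ass}^{\operatorname{qu}}\r\mathtt{End}_{\C C}(X)$ amounts to giving two maps $\mathcal F(C[1])\r\mathtt{End}_{\C C}(X)$, i.e.~two maps $C\r\mathtt{End}_{\C C}(X)(1)=\hom_{\C C}(X,X)$, which restrict along $i\colon\tilde\unit\coprod\tilde\unit\into C$ to the composites prescribed by $c_1$ and $c_2$. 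Unwinding $c_{11}=c_{21}$ via the enrichment, one endpoint of each of these two maps $C\r\hom_{\C C}(X,X)$ is the map classifying $q_{\unit}\otimes X$ followed by the unit isomorphism $\unit\otimes X\cong X$ (for $c_1$) and $X\otimes q_{\unit}$ followed by $X\otimes\unit\cong X$ (for $c_2$); unwinding $c_{12}$ and $c_{22}$, the other endpoint of the first map is the composite $\tilde\unit\otimes X\xrightarrow{u\otimes X}X\otimes X\xrightarrow{\text{mult.}}X$ and of the second is $X\otimes\tilde\unit\xrightarrow{X\otimes u}X\otimes X\xrightarrow{\text{mult.}}X$. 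Since $C$ is a cylinder on $\tilde\unit$ (and $\hom_{\C C}(X,X)$ is fibrant because $X$ is fibrant-cofibrant and by Corollary \ref{esfibrante}), a map $C\r\hom_{\C C}(X,X)$ with these prescribed endpoints is exactly a (left) homotopy between the two maps, which under the enrichment $\hom_{\C C}(\tilde\unit\otimes X,X)\cong\hom_{\C C}(X,X)$ corresponds to a homotopy between the two maps $\tilde\unit\otimes X\r X$ named in Definition \ref{qu}; similarly for the second. Hence the factorization exists iff $u$ makes those two pairs of maps homotopic, which is the definition of quasi-unitality.

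In writing this up I would be slightly careful about two technical matters. One is the translation between the enriched morphism objects $\hom_{\C C}(-,-)$ (with values in $\C V$) and homotopy classes of maps in $\C C$: this uses that $X$ is fibrant and its tensor powers are cofibrant (push-out product axiom), so that $\hom_{\C C}(X^{\otimes n},X)$ computes the derived mapping object, and that the unit $\unit$ admits the cofibrant replacement $q_{\unit}\colon\tilde\unit\st{\sim}\onto\unit$ used in Definition \ref{qu}. The other is the choice of cylinder: the cylinder $C$ fixed in Definition \ref{assqu} may differ from one used to phrase a given homotopy, but since $\hom_{\C C}(X,X)$ is fibrant, the left-homotopy relation for maps out of the cofibrant object $\tilde\unit$ is independent of the cylinder, so this causes no trouble.

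I expect the main obstacle to be purely bookkeeping: carefully matching the composites $c_{11},c_{12},c_{21},c_{22}$ defined in Definition \ref{assqu} — which involve the operad composition laws $\circ_1,\circ_2$ of $\mathtt{Ass}$ and the unit isomorphisms $\unit\otimes\unit\cong\unit$ — with the two explicit composites $\tilde\unit\otimes X\xrightarrow{u\otimes X}X\otimes X\xrightarrow{\text{mult.}}X$ and $X\otimes\tilde\unit\xrightarrow{X\otimes u}X\otimes X\xrightarrow{\text{mult.}}X$ appearing in Definition \ref{qu}, and with the comparison maps $\tilde\unit\otimes X\xrightarrow{q_{\unit}\otimes X}\unit\otimes X\cong X$ and $X\otimes\tilde\unit\xrightarrow{X\otimes q_{\unit}}\unit\otimes X\cong X$. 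Once the dictionary ``$c_1$ data $\leftrightarrow$ left-multiplication homotopy, $c_2$ data $\leftrightarrow$ right-multiplication homotopy'' is set up, the equivalence drops out of the universal properties, so I would keep the proof short and point to the adjunctions rather than expanding all structure morphisms.
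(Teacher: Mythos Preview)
Your proposal is correct and is precisely the unwinding the paper has in mind: the paper's own proof is the single sentence ``This lemma follows directly from Definitions~\ref{qu} and~\ref{assqu}'', and what you have written is exactly the dictionary implicit in that sentence. Your care with the cylinder independence and the fibrancy of $\hom_{\C C}(X,X)$ is appropriate; the only cosmetic point is that the map $u$ you extract is literally $z(\tilde\unit)\to X$ in $\C C$ (via the adjunction $z(-)\otimes\unit_{\C C}\dashv\hom_{\C C}(\unit_{\C C},-)$), which is identified with a map $\tilde\unit\to X$ through the unit isomorphism $\unit_{\C C}\cong z(\unit_{\C V})$, so you may wish to make that identification explicit once.
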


This lemma follows directly from Definitions \ref{qu} and \ref{assqu}. 

%

The universal property of a push-out shows the existence of a unique map $$\psi\colon\mathtt{Ass}^{\operatorname{qu}}\To \mathtt{uAss}$$ such that the composite $\mathtt{Ass}\into \mathtt{Ass}^{\operatorname{qu}}\st{\psi}\r \mathtt{uAss}$ is $\phi$,  defined by 
\begin{align*}
\tilde \unit&\mathop{\onto}\limits^{q_{\unit}}_{\sim}\unit=\mathtt{uAss}(0),&
C&\mathop{\onto}\limits^{p}_{\sim}\tilde \unit\mathop{\onto}\limits^{q_{\unit}}_{\sim}\unit=\mathtt{uAss}(1),
\end{align*}
on both copies of $C$.

\begin{prop}\label{retracto}
For any fibrant-cofibrant object $Y$ in $\C C$, the map 
$$\psi^{*}\colon
\rmap_{\operad{\C V}}(\mathtt{uAss},\mathtt{End}_{\C C}(Y))\To
\rmap_{\operad{\C V}}(\mathtt{Ass}^{\operatorname{qu}},\mathtt{End}_{\C C}(Y))$$
admits a retraction in the homotopy category of simplicial sets.
\end{prop}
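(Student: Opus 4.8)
The plan is to exploit the factorization $\psi\circ j=\phi$, where $j\colon\mathtt{Ass}\hookrightarrow\mathtt{Ass}^{\operatorname{qu}}$ is the cofibration from Definition \ref{assqu}, together with the fact proved in \cite{udga} that the map $\phi^{*}$ of \eqref{momo} is a homotopy monomorphism. Write $E=\mathtt{End}_{\C C}(Y)$, which is a fibrant operad by Corollary \ref{esfibrante}, and abbreviate $U=\rmap_{\operad{\C V}}(\mathtt{uAss},E)$, $Q=\rmap_{\operad{\C V}}(\mathtt{Ass}^{\operatorname{qu}},E)$ and $\mathcal{A}=\rmap_{\operad{\C V}}(\mathtt{Ass},E)$, so that $j^{*}\psi^{*}=\phi^{*}\colon U\to\mathcal{A}$; we must produce a retraction of $\psi^{*}\colon U\to Q$. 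Suppose first $U\neq\varnothing$. Since $\phi^{*}$ is a homotopy monomorphism it corestricts to a weak equivalence $\bar\phi\colon U\to\mathcal{A}_{0}$ onto a union of connected components $\mathcal{A}_{0}\subseteq\mathcal{A}$. Replacing the mapping spaces by fibrant models so that $j^{*}$ is a genuine simplicial map, set $Q_{0}=(j^{*})^{-1}(\mathcal{A}_{0})$, a union of connected components of $Q$. As $j^{*}\psi^{*}=\phi^{*}$ takes values in $\mathcal{A}_{0}$, the map $\psi^{*}$ factors through the inclusion $Q_{0}\subseteq Q$; write $\psi^{*}_{0}\colon U\to Q_{0}$ for the corestriction and $j^{*}_{0}\colon Q_{0}\to\mathcal{A}_{0}$ for the restriction of $j^{*}$, so that $j^{*}_{0}\psi^{*}_{0}=\bar\phi$. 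In $\ho(\operatorname{Set}^{\Delta^{\op}})$ put $\rho_{0}=\bar\phi^{-1}\circ j^{*}_{0}\colon Q_{0}\to U$; then $\rho_{0}\circ\psi^{*}_{0}=\bar\phi^{-1}\bar\phi=\id{U}$. Choosing a vertex $u_{0}\in U_{0}$ and defining $\rho\colon Q\to U$ to be $\rho_{0}$ on $Q_{0}$ and the constant map at $u_{0}$ on the remaining components yields a map in $\ho(\operatorname{Set}^{\Delta^{\op}})$ with $\rho\circ\psi^{*}=\rho_{0}\circ\psi^{*}_{0}=\id{U}$, the desired retraction.

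It remains to treat the case $U=\varnothing$, in which a retraction of $\psi^{*}$ exists if and only if $Q=\varnothing$; the claim is that this holds. Assume $Q\neq\varnothing$ and pick a class in $\pi_{0}Q$. Using Theorem \ref{rezk}, strictification of algebras over cofibrant resolutions (Remark \ref{estricto}), and the homotopy invariance of endomorphism operads (Proposition \ref{uno}), its image under $\pi_{0}j^{*}$ is realized by an associative algebra $W$ in $\C C$ whose underlying object is fibrant--cofibrant and weakly equivalent to $Y$, and whose structure morphism $\mathtt{Ass}\to\mathtt{End}_{\C C}(W)$ factors up to homotopy — hence, $j$ being a cofibration into a fibrant operad, strictly — through $j$; by Lemma \ref{yiyiyi}, $W$ is quasi-unital. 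Proposition \ref{lurie} then places the component of $W$ in the image of $\pi_{0}|w\algebra{\mathtt{uAss}}{\C C}|\hookrightarrow\pi_{0}|w\algebra{\mathtt{Ass}}{\C C}|$, so there is a unital associative algebra $Z$ in the same component of $|w\algebra{\mathtt{Ass}}{\C C}|$; its underlying object is then weakly equivalent to $Y$, so, applying Proposition \ref{uno} once more, $E\simeq\mathtt{End}_{\C C}(Z)$ and therefore $U\simeq\rmap_{\operad{\C V}}(\mathtt{uAss},\mathtt{End}_{\C C}(Z))\neq\varnothing$, contradicting $U=\varnothing$.

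The elementary homotopy-category bookkeeping in the first paragraph is routine once one knows that $\phi^{*}$ is a homotopy monomorphism. The genuine obstacle is the emptiness comparison of the second paragraph: this is exactly where Lurie's theorem enters, through Proposition \ref{lurie}, and where one must carefully navigate the passage between $\mathtt{A}_{\infty}$-structures on $Y$, their strictifications to honest associative algebras with weakly equivalent underlying object, and the identification — supplied by Lemma \ref{yiyiyi} — of quasi-unitality with a factorization of the structure morphism through $\mathtt{Ass}^{\operatorname{qu}}$.
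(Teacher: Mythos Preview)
Your proof is correct and uses the same ingredients as the paper's argument: the factorization $\phi=\psi\circ j$, the homotopy monomorphism property of $\phi^{*}$ from \cite{udga}, Lemma \ref{yiyiyi}, and Lurie's result (via Proposition \ref{lurie} / Corollary \ref{luriecor}) together with strictification.

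The organization differs. The paper proves once and for all that the image of $j^{*}\colon Q\to\mathcal{A}$ lands entirely in $\mathcal{A}_{0}$: by Corollary \ref{luriecor}, $\mathcal{A}_{0}$ consists exactly of the components of $A_\infty$-structures weakly equivalent to a quasi-unital associative algebra, and by Lemma \ref{yiyiyi} (plus strictification) every class in the image of $j^{*}$ has this property. Once $j^{*}(Q)\subseteq\mathcal{A}_{0}$ is established, the retraction is simply $\bar\phi^{-1}\circ j^{*}$, and no case distinction is needed --- the empty case is automatic. Your approach instead avoids Lurie entirely in the nonempty case by extending via a constant map on $Q\setminus Q_{0}$, and then invokes the full force of Proposition \ref{lurie} only to handle the degenerate case $U=\varnothing$. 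This is a legitimate trade: your first paragraph is more elementary, but since the empty case still requires the same heavy input, the case split does not ultimately buy any economy and makes the argument longer. The paper's route is the more streamlined packaging of the same content.
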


\begin{proof}
The composite
$$
\xymatrix{\rmap_{\operad{\C V}}(\mathtt{uAss},\mathtt{End}_{\C C}(Y))\ar[d]^{\psi^{*}}\\
\rmap_{\operad{\C V}}(\mathtt{Ass}^{\operatorname{qu}},\mathtt{End}_{\C C}(Y))\ar[d]\\
\rmap_{\operad{\C V}}(\mathtt{Ass},\mathtt{End}_{\C C}(Y))}$$
is the homotopy monomorphism in \eqref{momo}, which it induces a weak equivalence between the source and a subset of connected components of the target. By Corollary \ref{luriecor} and Lemma \ref{yiyiyi}, the image of the second map lies on those connected components. Here we use that we can strictify algebras over cofibrant resolutions of admissible operads by using the Quillen equivalence \eqref{qp}. Hence we are done.
\end{proof}


Now, let us place ourselves in a HAG context, as in the previous section. The notion of substack in Definition \ref{substack} is too strict, since it is not homotopy invariant. The following lemma shows that homotopy monomorphisms of stacks are homotopy invariant replacements of inclusions of substacks.

\begin{lem}\label{parastacks}
Given a morphism of stacks $f\colon F\r G$, the following statements are equivalent: 
\begin{enumerate}
\item $f(A)\colon F(A)\r G(A)$ is a homotopy monomorphism of simplicial sets  for any commutative algebra $A$ in $\C V$.

\item $f$ is a homotopy monomorphism in  $\operatorname{SPr}(\aff_{\C V})$.

\item $f$ is a homotopy monomorphism in $\aff_{\C V}^{\sim,\tau}$.
\end{enumerate}
\end{lem}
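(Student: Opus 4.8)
The plan is to reduce everything to the characterization of a homotopy monomorphism $f\colon F\r G$ in a model category $\C M$ as the assertion that the derived codiagonal $\Delta_{f}\colon F\r F\times^{h}_{G}F$ is a weak equivalence in $\C M$, and then to exploit the fact that, for stacks, both the relevant homotopy pull-back and the notion of weak equivalence are computed objectwise. Since a homotopy monomorphism is a derived notion, all three conditions are invariant under replacing $F$ and $G$ by weakly equivalent simplicial presheaves, so as a preliminary step I would assume that $F$ and $G$ take values in Kan complexes; then they are fibrant both in $\operatorname{SPr}(\aff_{\C V})$ and in $\aff_{\C V}^{\sim,\tau}$, and the objectwise pull-back really does compute $F\times^{h}_{G}F$.

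For (1) $\Leftrightarrow$ (2): in $\operatorname{SPr}(\aff_{\C V})$ weak equivalences and fibrations are objectwise, hence so are homotopy limits; in particular $(F\times^{h}_{G}F)(A)\simeq F(A)\times^{h}_{G(A)}F(A)$ for every commutative algebra $A$, and $\Delta_{f}$ evaluated at $A$ is the codiagonal $\Delta_{f(A)}$. Thus $\Delta_{f}$ is a weak equivalence in $\operatorname{SPr}(\aff_{\C V})$ — i.e.\ $f$ is a homotopy monomorphism there — if and only if $\Delta_{f(A)}$ is a weak equivalence for every $A$, which by the equivalent characterizations of homotopy monomorphisms of simplicial sets recalled just before this lemma is precisely condition (1).

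For (2) $\Leftrightarrow$ (3): I would invoke the three facts about $\aff_{\C V}^{\sim,\tau}$ already set out in Section \ref{geometry}, namely that $\aff_{\C V}^{\sim,\tau}$ is a left Bousfield localization of $\operatorname{SPr}(\aff_{\C V})$ in which homotopy limits of stacks agree with those computed objectwise in $\operatorname{SPr}(\aff_{\C V})$, that a homotopy pull-back of stacks is again a stack, and that a map between stacks is a weak equivalence in $\aff_{\C V}^{\sim,\tau}$ if and only if it is one in $\operatorname{SPr}(\aff_{\C V})$. Since $F$, $G$ and $F\times^{h}_{G}F$ are all stacks, the derived codiagonal $\Delta_{f}$ is literally the same map of stacks in both model categories, so it is a weak equivalence in one exactly when it is in the other, giving (2) $\Leftrightarrow$ (3).

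The argument is essentially formal once these objectwise properties of $\aff_{\C V}^{\sim,\tau}$ are available, so there is no serious obstacle; the only point needing a little care — and the step I would flag — is the initial reduction to fibrant $F$ and $G$ together with the verification that the objectwise homotopy pull-back genuinely models the derived codiagonal, so that no distinction between local and global fibrant replacement (or between the injective and projective presentations) intervenes.
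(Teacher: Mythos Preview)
Your argument is correct and is essentially the same as the paper's, only spelled out in more detail: the paper proves $(1)\Leftrightarrow(2)$ by noting that homotopy limits in $\operatorname{SPr}(\aff_{\C V})$ are computed pointwise, and $(2)\Leftrightarrow(3)$ by observing that the homotopy pull-back $F\times^{h}_{G}F$ in $\aff_{\C V}^{\sim,\tau}$ agrees with that in $\operatorname{SPr}(\aff_{\C V})$. Your additional care about fibrant replacement and the explicit identification of the codiagonal is a welcome expansion, but the underlying strategy is identical.
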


\begin{proof}
Homotopy limits in $\operatorname{SPr}(\aff_{\C V})$ are computed pointwise, hence $(1)\Leftrightarrow(2)$ follows. The equivalence $(2)\Leftrightarrow(3)$ is a consequence of the fact that the homotopy product $F\times^{h}_{G}F$ in $\aff_{\C V}^{\sim,\tau}$ coincides with the corresponding homotopy product in  $\operatorname{SPr}(\aff_{\C V})$. 
\end{proof}

Recall from the previous section that $\phi\colon\mathtt{Ass}\r \mathtt{uAss}$ induces morphism of stacks
\begin{equation}\label{laclau}
\algebrau{\phi}{\C V}\colon \algebrau{\mathtt{uAss}}{\C V}\To \algebrau{\mathtt{Ass}}{\C V}.
\end{equation}
Moreover, if $A$ is a commutative algebra and $M$ is an $A$-module, $\phi$ also induces a morphism of stacks
\begin{equation}\label{laclau2}
\phi^{*}\colon\rmapu_{\operad{\C V}}(\mathtt{uAss},\mathtt{End}_{\Mod{A}}(M))\To \rmapu_{\operad{\C V}}(\mathtt{Ass},\mathtt{End}_{\Mod{A}}(M)).
\end{equation}

\begin{prop}\label{qc}
The homotopy pull-back of the morphism $\algebrau{\phi}{\C V}$ along any map $g\colon \rspecu(A)\r\algebrau{\mathtt{Ass}}{\C V}$ represented by an associative algebra with underlying perfect $A$-module $M$ is an affine stack.
\end{prop}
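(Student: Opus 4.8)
The plan is to reduce the statement to Theorem \ref{lageo} by two applications of the pasting law for homotopy pull-backs. Since $M$ is perfect and the operads $\mathtt{Ass}$ and $\mathtt{uAss}$ are admissible, Theorem \ref{lageo} already guarantees that the stacks $\rmapu_{\operad{\C V}}(\mathtt{Ass},\mathtt{End}_{\Mod{A}}(M))$ and $\rmapu_{\operad{\C V}}(\mathtt{uAss},\mathtt{End}_{\Mod{A}}(M))$ are affine; recall that, by definition, these are the homotopy pull-backs of $\xi^{\mathtt{Ass}}$ and $\xi^{\mathtt{uAss}}$ along the morphism $m\colon\rspecu(A)\To\qcoh$ represented by $M$.

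First I would observe that $\xi^{\mathtt{Ass}}\circ g\colon\rspecu(A)\To\qcoh$ is precisely $m$, because $g$ is represented by an associative algebra whose underlying $A$-module is $M$ and $\xi^{\mathtt{Ass}}$ sends an algebra to its underlying module. Hence the universal property of the homotopy pull-back $\rmapu_{\operad{\C V}}(\mathtt{Ass},\mathtt{End}_{\Mod{A}}(M))=\rspecu(A)\times^{h}_{\qcoh}\algebrau{\mathtt{Ass}}{\C V}$ yields a lift $\tilde g\colon\rspecu(A)\To\rmapu_{\operad{\C V}}(\mathtt{Ass},\mathtt{End}_{\Mod{A}}(M))$ of $g$ — the morphism classifying the tautological associative algebra structure on $M$ determined by $g$ — whose composite with the projection $p_{\mathtt{Ass}}\colon\rmapu_{\operad{\C V}}(\mathtt{Ass},\mathtt{End}_{\Mod{A}}(M))\To\algebrau{\mathtt{Ass}}{\C V}$ is $g$.

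Next, using the naturality of $\xi^{\mathcal O}$ in the admissible operad $\mathcal O$, which gives $\xi^{\mathtt{Ass}}\circ\algebrau{\phi}{\C V}=\xi^{\mathtt{uAss}}$, together with the elementary identity $(X\times^{h}_{S}T)\times^{h}_{T}U\simeq X\times^{h}_{S}U$ for composable morphisms $U\To T\To S$, I would identify
$$\rmapu_{\operad{\C V}}(\mathtt{uAss},\mathtt{End}_{\Mod{A}}(M))\;\simeq\;\rmapu_{\operad{\C V}}(\mathtt{Ass},\mathtt{End}_{\Mod{A}}(M))\times^{h}_{\algebrau{\mathtt{Ass}}{\C V}}\algebrau{\mathtt{uAss}}{\C V},$$
where the morphism $\algebrau{\mathtt{uAss}}{\C V}\To\algebrau{\mathtt{Ass}}{\C V}$ is $\algebrau{\phi}{\C V}$ and the left-hand morphism corresponds to the projection. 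Applying the pasting law once more, this time to the factorization $g=p_{\mathtt{Ass}}\circ\tilde g$, the homotopy pull-back $Z$ of the statement becomes the homotopy pull-back of the morphism $\rmapu_{\operad{\C V}}(\mathtt{uAss},\mathtt{End}_{\Mod{A}}(M))\To\rmapu_{\operad{\C V}}(\mathtt{Ass},\mathtt{End}_{\Mod{A}}(M))$ along $\tilde g$. Finally, $\rspecu(A)$, $\rmapu_{\operad{\C V}}(\mathtt{Ass},\mathtt{End}_{\Mod{A}}(M))$ and $\rmapu_{\operad{\C V}}(\mathtt{uAss},\mathtt{End}_{\Mod{A}}(M))$ are all affine (the last two by Theorem \ref{lageo}), and affine stacks are closed under homotopy pull-backs since $\aff_{\C V}=\comm(\C V)^{\op}$ has finite homotopy colimits and $\rspecu$ is a full embedding carrying them to homotopy limits of stacks; therefore $Z$ is affine, as claimed.

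I expect the only genuinely delicate point to be the bookkeeping of the various base morphisms in the iterated homotopy pull-backs, and in particular the verification that the morphism $\rmapu_{\operad{\C V}}(\mathtt{uAss},\mathtt{End}_{\Mod{A}}(M))\To\rmapu_{\operad{\C V}}(\mathtt{Ass},\mathtt{End}_{\Mod{A}}(M))$ produced by the first identification is the one induced by $\phi$, sitting over the projection to $\algebrau{\mathtt{Ass}}{\C V}$; this reduces to the naturality of $\xi$ and of the stack of algebra structures in the operad variable (Remark \ref{yayaya}). Note that, in contrast with parts (2) and (3) of Theorem \ref{may}, this argument uses neither that $\C V$ is simplicial or complicial nor that $\phi$ is a homotopy epimorphism.
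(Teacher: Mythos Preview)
Your argument is correct and follows essentially the same route as the paper's proof: both identify the square with $\rmapu_{\operad{\C V}}(\mathtt{uAss},\mathtt{End}_{\Mod{A}}(M))$ and $\rmapu_{\operad{\C V}}(\mathtt{Ass},\mathtt{End}_{\Mod{A}}(M))$ over $\algebrau{\phi}{\C V}$ as a homotopy pull-back via the naturality $\xi^{\mathtt{Ass}}\circ\algebrau{\phi}{\C V}=\xi^{\mathtt{uAss}}$, then use the pasting law to reduce the desired fibre to a homotopy pull-back of affine stacks and invoke Theorem~\ref{lageo}. Your write-up is in fact slightly more explicit than the paper's about the two applications of the pasting law and about why homotopy pull-backs of affine stacks are affine, and your closing remark that no simplicial/complicial hypothesis is needed here is accurate.
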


\begin{proof}
The homotopy pull-back of $\xi^{\mathtt{Ass}}\colon \algebrau{\mathtt{Ass}}{\C V}\r\qcoh$ along the composite $\xi^{\mathtt{Ass}}g$ is the affine stack $\rmapu_{\operad{\C V}}(\mathtt{Ass},\mathtt{End}_{\Mod{A}}(M))$. Moreover, the homotopy pull-back of $\xi^{\mathtt{uAss}}$ along $\xi^{\mathtt{Ass}}g$ is the affine stack $\rmapu_{\operad{\C V}}(\mathtt{uAss},\mathtt{End}_{\Mod{A}}(M))$. Hence, 
\begin{equation*}
\xymatrix@C=30pt{
\rmapu_{\operad{\C V}}(\mathtt{uAss},\mathtt{End}_{\Mod{A }}(M ))\ar[r]^{\eqref{laclau2}}\ar[d]&\rmapu_{\operad{\C V}}(\mathtt{Ass},\mathtt{End}_{\Mod{A }}(M ))\ar[d]\\
\algebrau{\mathtt{uAss}}{\C V}\ar[r]_-{\algebrau{\phi}{\C V}}&\algebrau{\mathtt{Ass}}{\C V}}
\end{equation*}
is a homotopy pull-back, and the homotopy pull-back of $\algebrau{\phi}{\C V}$ along $g$ coincides with the homotopy pull-back of \eqref{laclau2} along the morphism $\rspecu(A)\r\rmapu_{\operad{\C V}}(\mathtt{Ass},\mathtt{End}_{\Mod{A }}(M ))$ defined by the universal property of a homotopy pull-back. Now, the result follows from the fact that a homotopy pull-back of affine stack is affine.
\end{proof}

Assume from now on that the base category $\C V$ of our HAG context is simplicial or complicial.

\begin{thm}\label{laclauthm}
The morphisms  \eqref{laclau} and \eqref{laclau2} are homotopy monomorphisms of stacks.
\end{thm}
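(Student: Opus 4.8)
The plan is to reduce the statement to the pointwise criterion of Lemma \ref{parastacks} together with the homotopy-monomorphism facts about classification spaces and operad mapping spaces that are already available. By Lemma \ref{parastacks}, a morphism of stacks is a homotopy monomorphism in $\aff_{\C V}^{\sim,\tau}$ precisely when its evaluation at each commutative algebra is a homotopy monomorphism of simplicial sets; for \eqref{laclau2} we invoke the same criterion in the HAG context over $\rspecu(A)$ supplied by Remark \ref{overa}, assuming, as we may, that $A$ is cofibrant. So everything comes down to two pointwise computations.

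For \eqref{laclau}, fix a commutative algebra $A$. By Lemma \ref{compatible0}, $\C C=\Mod{A}$ is a symmetric model $\C V$-algebra, and $\C V$ is simplicial or complicial by hypothesis. The change-of-operad functor $\phi^{*}$ inducing $\algebrau{\phi}{\C V}$ is the identity on underlying modules, so, up to the natural weak equivalences relating the pseudo-cofibrant thickened, the cofibrant, and the full classification spaces, the map $\algebrau{\phi}{\C V}(A)$ is the forgetful map
$$\abs{w\algebra{\mathtt{uAss}}{\Mod{A}}}\To\abs{w\algebra{\mathtt{Ass}}{\Mod{A}}},$$
which is a homotopy monomorphism by Proposition \ref{momomo}. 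Hence \eqref{laclau} is a homotopy monomorphism of stacks.

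For \eqref{laclau2}, fix a commutative $A$-algebra $B$. By Remark \ref{vayavaya}, the evaluation of \eqref{laclau2} at $B$ is identified with
$$\phi^{*}\colon\rmap_{\operad{\C V}}(\mathtt{uAss},\mathtt{End}_{\Mod{B}}(\widetilde{M\otimes_{A}^{\mathbb L}B}))\To\rmap_{\operad{\C V}}(\mathtt{Ass},\mathtt{End}_{\Mod{B}}(\widetilde{M\otimes_{A}^{\mathbb L}B})),$$
that is, with the map \eqref{momo} for $\C C=\Mod{B}$ and $Y=\widetilde{M\otimes_{A}^{\mathbb L}B}$, which is a homotopy monomorphism because $\phi$ is a homotopy epimorphism in $\operad{\C V}$, the main theorem of \cite{udga}. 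Therefore \eqref{laclau2} is a homotopy monomorphism of stacks.

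The main obstacle is bookkeeping rather than conceptual content: one must check that, after all the thickenings, fibrant replacements, and strictifications used to define the moduli stacks $\algebrau{\mathcal O}{\C V}$, the stack $\qcoh$, and the mapping stacks, the evaluations of $\algebrau{\phi}{\C V}$ and of \eqref{laclau2} genuinely coincide, up to coherent natural weak equivalence, with the forgetful map of Proposition \ref{momomo} and with \eqref{momo} respectively. This requires the naturality in $\mathcal O$ of Theorem \ref{rezk} (Remarks \ref{yayaya} and \ref{vayavaya}), the naturality of $\xi^{\mathcal O}$, and the functoriality of the homotopy-fiber construction; once these identifications are secured, the theorem drops out of Proposition \ref{momomo} and \eqref{momo}.
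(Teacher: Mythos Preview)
Your proof is correct and follows exactly the paper's own argument: the paper's proof is the single sentence ``This result follows from Lemma \ref{parastacks}, Proposition \ref{momomo}, and the fact that \eqref{momo} is a homotopy monomorphism,'' and you have simply unpacked this, applying the pointwise criterion of Lemma \ref{parastacks} and invoking Proposition \ref{momomo} for \eqref{laclau} and the homotopy epimorphism property of $\phi$ (via \eqref{momo} and Remark \ref{vayavaya}) for \eqref{laclau2}. Your final paragraph on bookkeeping is a fair gloss on the strictification issues the paper handles implicitly throughout Section \ref{geometry}.
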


This result follows from Lemma \ref{parastacks}, Proposition \ref{momomo}, and the fact that \eqref{momo} is a homotopy monomorphism.

\begin{defn}[{\cite[Definition 1.2.3.1]{hagII}}]
An object $X$ in a model category $\C M$ is \emph{homotopically finitely presented} if the mapping space functor $\rmap_{\C M}(X,-)$ preserves filtered homotopy colimits. A morphism $X\r Y$ in $\C M$ is \emph{homotopically finitely presented} if it is homotopically finitely presented as an object in the comma model category $X\downarrow\C M$.
\end{defn}

Retracts of homotopically finitely presented objects are also homotopically finitely presented.

\begin{thm}\label{esfp}
Assume that $\C V$ is compactly generated in the sense of \cite[Definition 1.2.3.4 (3)]{hagII} and that the tensor unit is homotopically finitely presented.
Suppose $M$ is a perfect $A$-module. Then the morphism of affine stacks \eqref{laclau2} is represented by a homotopically finitely presented morphism of commutative $A$-algebras.
\end{thm}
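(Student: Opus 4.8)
The plan is to base everything on the factorization $\mathtt{Ass}\hookrightarrow\mathtt{Ass}^{\operatorname{qu}}\xrightarrow{\psi}\mathtt{uAss}$ of Definition \ref{assqu}. All three operads are admissible and $M$ is perfect, so Theorem \ref{lageo} provides commutative $A$-algebras with $\rmapu_{\operad{\C V}}(\mathcal O,\mathtt{End}_{\Mod{A}}(M))\simeq\rspecu(C_{\mathcal O})$ for $\mathcal O\in\{\mathtt{Ass},\mathtt{Ass}^{\operatorname{qu}},\mathtt{uAss}\}$ (after the usual reduction to $A$ cofibrant and $M$ fibrant--cofibrant). The factorization of $\phi$ then yields $C_{\mathtt{Ass}}\to C_{\mathtt{Ass}^{\operatorname{qu}}}\xrightarrow{\psi_{*}}C_{\mathtt{uAss}}$, whose composite represents \eqref{laclau2}. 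Since retracts of homotopically finitely presented morphisms are homotopically finitely presented, it suffices to prove: (a) the morphism $C_{\mathtt{Ass}}\to C_{\mathtt{Ass}^{\operatorname{qu}}}$ is homotopically finitely presented; and (b) $C_{\mathtt{uAss}}$ is a retract of $C_{\mathtt{Ass}^{\operatorname{qu}}}$ in $\ho(C_{\mathtt{Ass}}\downarrow\comm(\C V))$.

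For (a), I would first observe that $\mathtt{Ass}\hookrightarrow\mathtt{Ass}^{\operatorname{qu}}$ is a homotopically finitely presented morphism of operads: by Definition \ref{assqu} it is the composite of the cofibration $\mathtt{Ass}\hookrightarrow\mathtt{Ass}\coprod\mathcal F(\tilde\unit[0])$ with a homotopy cobase change along $\mathcal F(i[1])\coprod\mathcal F(i[1])$, and since $\unit$ is homotopically finitely presented so are $\tilde\unit$, $\tilde\unit\coprod\tilde\unit$ and the cylinder $C$, whence every free operad and every arrow occurring there is homotopically finitely presented. Every operad in that diagram is admissible (free operads on cofibrant sequences are cofibrant, $\mathtt{Ass}$ is admissible, and $\mathtt{Ass}\coprod\mathcal F(\tilde\unit[0])$ and $\mathtt{Ass}^{\operatorname{qu}}$ receive cofibrations from $\mathtt{Ass}$), so by Remark \ref{yayaya} together with Theorem \ref{lageo} the functor $\mathcal O\mapsto C_{\mathcal O}$ carries these homotopy colimits of operads to homotopy colimits of commutative $A$-algebras; and by the computation in the proof of Theorem \ref{lageo} it sends $\mathcal F(U)$ to the free commutative $A$-algebra on $\coprod_{n\geq 0}U_{n}\otimes M^{\otimes_{A}n}\otimes_{A}M^{\vee}$. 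Because $M$ is perfect, $M$, $M^{\vee}$ and the tensor powers $M^{\otimes_{A}n}$ are homotopically finitely presented $A$-modules, and the sequences $U$ occurring above have only finitely many non-initial entries, so each of those generating modules --- hence each free commutative $A$-algebra $C_{\mathcal F(U)}$ --- is homotopically finitely presented. A morphism between homotopically finitely presented commutative $A$-algebras is a homotopically finitely presented morphism, and these are closed under composition and homotopy cobase change, which yields (a).

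For (b), I would work with stacks over $\rspecu(C_{\mathtt{Ass}})$. By Theorem \ref{laclauthm} the morphism \eqref{laclau2}, that is $\rspecu(C_{\mathtt{uAss}})\to\rspecu(C_{\mathtt{Ass}})$, is a homotopy monomorphism of stacks, so by Lemma \ref{parastacks} it is, up to equivalence, the inclusion of a substack $G\subset\rspecu(C_{\mathtt{Ass}})$. The morphism $\rspecu(C_{\mathtt{Ass}^{\operatorname{qu}}})\to\rspecu(C_{\mathtt{Ass}})$ factors through $G$: at any commutative $A$-algebra $B$ a point of the source is an $\mathtt{Ass}^{\operatorname{qu}}$-algebra structure on a fibrant--cofibrant replacement of $M\otimes_{A}^{\mathbb{L}}B$, which by Lemma \ref{yiyiyi} and Corollary \ref{luriecor} lies in the connected component of a quasi-unital, hence unital, associative algebra. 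Postcomposing this factorization with the inverse of the equivalence $\rspecu(C_{\mathtt{uAss}})\xrightarrow{\sim}G$ gives a morphism $\rspecu(C_{\mathtt{Ass}^{\operatorname{qu}}})\to\rspecu(C_{\mathtt{uAss}})$ over $\rspecu(C_{\mathtt{Ass}})$ that is a left inverse to $\rspecu(C_{\mathtt{uAss}})\to\rspecu(C_{\mathtt{Ass}^{\operatorname{qu}}})$; this is the version of Proposition \ref{retracto} made uniform in $B$. Since $\rspecu$ is fully faithful on homotopy categories, this exhibits $C_{\mathtt{uAss}}$ as a retract of $C_{\mathtt{Ass}^{\operatorname{qu}}}$ in $\ho(C_{\mathtt{Ass}}\downarrow\comm(\C V))$, and together with (a) the proof is complete.

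I expect the main obstacle to be step (b): upgrading the pointwise retraction of Proposition \ref{retracto} to an actual retraction of stacks over $\rspecu(C_{\mathtt{Ass}})$. This is what forces the route through Lemma \ref{parastacks} --- which identifies a homotopy monomorphism of stacks with a homotopy-invariant substack inclusion --- and the computation, via Lemma \ref{yiyiyi} and Corollary \ref{luriecor}, of the essential image of $\rspecu(C_{\mathtt{Ass}^{\operatorname{qu}}})\to\rspecu(C_{\mathtt{Ass}})$ inside that substack. Step (a), by contrast, is bookkeeping, but it uses the compact generation of $\C V$ to ensure that perfect modules and the relevant free operads are homotopically finitely presented.
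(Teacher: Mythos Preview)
Your proof is correct and follows essentially the same route as the paper: factor $\phi$ through $\mathtt{Ass}^{\operatorname{qu}}$, show that $C_{\mathtt{Ass}}\to C_{\mathtt{Ass}^{\operatorname{qu}}}$ is homotopically finitely presented using the explicit pushout in Definition~\ref{assqu} and perfection of $M$, and conclude by exhibiting $C_{\mathtt{uAss}}$ as a retract of $C_{\mathtt{Ass}^{\operatorname{qu}}}$ over $C_{\mathtt{Ass}}$. The only cosmetic differences are that the paper writes down the homotopy pushout of commutative $A$-algebras directly (rather than first arguing at the level of operads and then transferring), and that your step~(b) is precisely the content of the paper's Proposition~\ref{retracto2}, which the paper states separately and then simply cites.
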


Assuming that $\C V$ is compactly generated is not a very strong hypothesis. It is satisfied by any model category which is locally finitely presentable \cite[Definition 1.9]{adamekrosicky} and finitely generated \cite[Definition 2.1.17]{hmc}. All the underlying model categories of the HAG contexts considered in \cite{hagII} satisfy these properties. Neither is very strong to assume that the tensor unit is homotopically finitely presented. This assumption is also used in \cite{hagII} to show that  the general linear group is a Zariski open affine substack of the affine stack of matrices in a HAG context, see \cite[Propositions 1.2.9.4 and 1.3.7.10]{hagII}.

In the proof of Theorem \ref{esfp}, we use the following proposition.

\begin{prop}\label{retracto2}
For any commutative algebra $A$ and any $A$-module $M$, the map
$$\psi^{*}\colon
\rmapu_{\operad{\C V}}(\mathtt{uAss},\mathtt{End}_{\Mod{A}}(M))\To
\rmapu_{\operad{\C V}}(\mathtt{Ass}^{\operatorname{qu}},\mathtt{End}_{\Mod{A}}(M))$$
admits a retraction in the homotopy category of stacks.
\end{prop}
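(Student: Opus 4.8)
The plan is to run the argument of Proposition \ref{retracto} one level higher, in the homotopy category of stacks, replacing the ``subset of connected components'' bookkeeping by a homotopy pull-back. Write $\phi^{*}$ for the morphism of stacks \eqref{laclau2}, and let $\theta\colon\rmapu_{\operad{\C V}}(\mathtt{Ass}^{\operatorname{qu}},\mathtt{End}_{\Mod{A}}(M))\To\rmapu_{\operad{\C V}}(\mathtt{Ass},\mathtt{End}_{\Mod{A}}(M))$ be the morphism of stacks induced by the cofibration $\mathtt{Ass}\into\mathtt{Ass}^{\operatorname{qu}}$ through the contravariant functoriality of Remark \ref{yayaya}; since $\psi$ precomposed with $\mathtt{Ass}\into\mathtt{Ass}^{\operatorname{qu}}$ is $\phi$, that functoriality gives $\phi^{*}=\theta\circ\psi^{*}$. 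First I would form the homotopy pull-back $P$ of $\phi^{*}$ along $\theta$, with projections $p\colon P\To\rmapu_{\operad{\C V}}(\mathtt{Ass}^{\operatorname{qu}},\mathtt{End}_{\Mod{A}}(M))$ and $q\colon P\To\rmapu_{\operad{\C V}}(\mathtt{uAss},\mathtt{End}_{\Mod{A}}(M))$, so that $\phi^{*}\circ q\simeq\theta\circ p$. The whole statement then reduces to showing that $p$ is a weak equivalence of stacks: granting this, if $s$ is a homotopy inverse of $p$ and $r:=q\circ s$, then $\phi^{*}\circ r\simeq\theta\circ p\circ s\simeq\theta$, hence $\phi^{*}\circ(r\circ\psi^{*})\simeq\theta\circ\psi^{*}=\phi^{*}=\phi^{*}\circ\id{}$; and since $\phi^{*}$ is a homotopy monomorphism of stacks (Theorem \ref{laclauthm}), two morphisms of stacks with target $\rmapu_{\operad{\C V}}(\mathtt{uAss},\mathtt{End}_{\Mod{A}}(M))$ that become homotopic after composition with $\phi^{*}$ are themselves homotopic, so $r\circ\psi^{*}\simeq\id{}$, i.e.~$r$ is the desired retraction.

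To prove that $p$ is a weak equivalence I would work pointwise, since homotopy limits of stacks are computed objectwise. Fix a commutative algebra $B$, regarded as an $A$-algebra through an arbitrary map $A\r B$, and set $Y=\widetilde{M\otimes_{A}^{\mathbb L}B}$. Using Remark \ref{vayavaya} for the admissible operads $\mathtt{Ass}$, $\mathtt{Ass}^{\operatorname{qu}}$ and $\mathtt{uAss}$, compatibly with the maps induced by operad morphisms, $P(B)$ is identified with the homotopy pull-back of the maps $\rmap_{\operad{\C V}}(\mathtt{uAss},\mathtt{End}_{\Mod{B}}(Y))\to\rmap_{\operad{\C V}}(\mathtt{Ass},\mathtt{End}_{\Mod{B}}(Y))\leftarrow\rmap_{\operad{\C V}}(\mathtt{Ass}^{\operatorname{qu}},\mathtt{End}_{\Mod{B}}(Y))$ induced by $\phi$ and by $\mathtt{Ass}\into\mathtt{Ass}^{\operatorname{qu}}$. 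By Theorem \ref{laclauthm} and Lemma \ref{parastacks}, the left-hand map is here a homotopy monomorphism of simplicial sets, i.e.~a weak equivalence onto a union of connected components of its target; and, exactly as in the proof of Proposition \ref{retracto} (via Corollary \ref{luriecor}, Lemma \ref{yiyiyi}, and the strictification of algebras over cofibrant resolutions of admissible operads afforded by \eqref{qp}), the image of the right-hand map lies inside those very components. Consequently every homotopy fibre of $p(B)$ is a homotopy fibre of the left-hand map over a vertex of its image, hence weakly contractible, so $p(B)$ is a weak equivalence; as $B$ was arbitrary, $p$ is a weak equivalence of stacks.

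The step I expect to require the most care is the pointwise identification in the second paragraph: checking that under the weak equivalences of Remark \ref{vayavaya} the two maps into $\rmap_{\operad{\C V}}(\mathtt{Ass},\mathtt{End}_{\Mod{B}}(Y))$ really are induced by $\mathtt{Ass}\into\mathtt{Ass}^{\operatorname{qu}}\st{\psi}{\To}\mathtt{uAss}$, and that the assertion ``the image of $\theta(B)$ is contained in the image of $\phi^{*}(B)$'' is genuinely what Lemma \ref{yiyiyi} and Corollary \ref{luriecor} give --- namely that an $A_{\infty}$-structure on $Y$ admitting a lift to an $\mathtt{Ass}^{\operatorname{qu}}$-structure is weakly equivalent to a quasi-unital associative algebra, hence lies in the image of $\phi^{*}$. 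This is also the only place where the running assumption that $\C V$ be simplicial or complicial is used (through Corollary \ref{luriecor}/Proposition \ref{lurie}); it is precisely the content already established pointwise in the proof of Proposition \ref{retracto}, so the only new work is packaging it into a statement about stacks via the homotopy pull-back $P$.
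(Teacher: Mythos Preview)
Your argument is correct and follows the same route the paper intends: the paper's proof is the single line ``This result follows in the same way as Proposition \ref{retracto}, using Remark \ref{vayavaya},'' i.e.\ one checks pointwise, via Remark \ref{vayavaya}, that $\theta$ lands in the image of the homotopy monomorphism $\phi^{*}$ (Theorem \ref{laclauthm}) exactly as in Proposition \ref{retracto}, and then inverts the corestriction of $\phi^{*}$. Your homotopy pull-back $P$ is just a clean way of packaging that corestriction-and-invert step so that it manifestly produces a map in the homotopy category of stacks rather than only a pointwise retraction; the substantive input (Corollary \ref{luriecor}, Lemma \ref{yiyiyi}, and the strictification via \eqref{qp}) is identical.
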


This result follows in the same way as Proposition \ref{retracto}, using Remark \ref{vayavaya}.

\begin{proof}[Proof of Theorem \ref{esfp}]
We can suppose without loss of generality that $A$ and $M$ are fibrant and cofibrant. 
By Definition \ref{assqu}, Remark \ref{yayaya}, and the proof of Theorem \ref{lageo}, if $B$ is a fibrant and cofibrant commutative $A$-algebra representing $\rmapu_{\operad{\C V}}(\mathtt{Ass},\mathtt{End}_{\Mod{A}}(M))$ then a commutative $A$-algebra $D$ representing $\rmapu_{\operad{\C V}}(\mathtt{Ass}^{\operatorname{qu}},\mathtt{End}_{\Mod{A}}(M))$ can be defined as a certain homotopy push-out
$$\xymatrix{
{\begin{array}{c} \operatorname{Sym}_{A}((\tilde\unit\coprod\tilde\unit)\otimes M\otimes_{A} M^{\vee})\\\coprod\\
  \operatorname{Sym}_{A}((\tilde\unit\coprod\tilde\unit)\otimes M\otimes_{A} M^{\vee})
  \end{array}}
  \ar[d]\ar[r]\ar@{}[rd]|{\text{htpy. push}}&
{\begin{array}{c} \operatorname{Sym}_{A}(C\otimes M\otimes_{A} M^{\vee})\\\coprod\\
  \operatorname{Sym}_{A}(C\otimes M\otimes_{A} M^{\vee})
  \end{array}}\ar[d]\\
B\coprod \operatorname{Sym}_{A}(\tilde\unit\otimes M^{\vee})\ar[r]&D}$$
in the category of commutative $A$-algebras. 
Here $\operatorname{Sym}_{A}$ denotes the free commutative $A$-algebra functor.

Since $\C V$ is compactly generated, derived change of coefficient functors along commutative algebra morphisms preserve homotopically finitely presented objects by \cite[Proposition 1.2.3.5]{hagII}. 
Since $\unit$ is homotopically finitely presented in $\C V$, then so is $A$ in $\Mod{A}$ and more generally any perfect $A$-module, see \cite[Proposition 1.2.3.7]{hagII}. Therefore all free $A$-algebras in the previous homotopy push-out are homotopically finitely presented. Hence, the map $B\r D$ is homotopically  finitely presented.

By Proposition \ref{retracto2}, any $B$-algebra representing $\rmapu_{\operad{\C V}}(\mathtt{uAss},\mathtt{End}_{\Mod{A}}(M))$ is a homotopy retract of $D$, hence also homotopically  finitely presented.
\end{proof}

\begin{cor}
The homotopy pull-back of $\algebrau{\phi}{\C V}$ along any  $g\colon \rspecu(A)\r\algebrau{\mathtt{Ass}}{\C V}$ represented by an associative algebra with underlying perfect $A$-module $M$ is an affine stack represented by a homotopically finitely presented $A$-algebra.
\end{cor}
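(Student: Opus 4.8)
The plan is to assemble the statement from Proposition~\ref{qc} and Theorem~\ref{esfp}, with essentially no new work. By the proof of Proposition~\ref{qc}, the homotopy pull-back of $\algebrau{\phi}{\C V}$ along $g$ is an affine stack: it is the homotopy pull-back of the morphism of affine stacks \eqref{laclau2} along the morphism $\rspecu(A)\r\rmapu_{\operad{\C V}}(\mathtt{Ass},\mathtt{End}_{\Mod{A}}(M))$ produced from $g$ by the universal property of a homotopy pull-back. So affineness is already in hand, and it only remains to identify the representing commutative $A$-algebra and verify it is homotopically finitely presented.

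First I would pass to commutative $A$-algebras by regarding affine stacks over $\rspecu(A)$ as objects of $\comm(\Mod{A})$, via Remark~\ref{overa}. Under this identification the homotopy pull-back above becomes a homotopy push-out of commutative $A$-algebras, i.e.~a derived tensor product over the $A$-algebra $B$ representing $\rmapu_{\operad{\C V}}(\mathtt{Ass},\mathtt{End}_{\Mod{A}}(M))$; concretely, the $A$-algebra we want is $A\otimes_{B}^{\mathbb L}C$, where $C$ is the $A$-algebra representing $\rmapu_{\operad{\C V}}(\mathtt{uAss},\mathtt{End}_{\Mod{A}}(M))$, obtained by homotopy cobase change from the morphism $B\r C$ representing \eqref{laclau2}. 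The standing hypotheses of this section are in force ($\C V$ simplicial or complicial, compactly generated, with homotopically finitely presented tensor unit), so Theorem~\ref{esfp} applies and $B\r C$ is homotopically finitely presented.

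The remaining step is to invoke stability of homotopically finitely presented morphisms of commutative algebras under homotopy cobase change, so that $A\r A\otimes_{B}^{\mathbb L}C$ is again homotopically finitely presented. This is the only point requiring a moment's care, since it is not stated verbatim in the excerpt; but it follows from the same mechanism used in the proof of Theorem~\ref{esfp} — derived change of coefficients along a commutative algebra morphism preserves homotopically finitely presented objects, \cite[Proposition~1.2.3.5]{hagII} — now applied at the level of commutative $A$-algebras rather than modules. I do not anticipate any genuine obstacle: the corollary is a bookkeeping combination of Proposition~\ref{qc} and Theorem~\ref{esfp}, and its affineness assertion is literally Proposition~\ref{qc}.
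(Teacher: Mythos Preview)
Your proposal is correct and follows essentially the same route as the paper: identify the homotopy pull-back of $\algebrau{\phi}{\C V}$ along $g$ with a homotopy pull-back along \eqref{laclau2} (via the proof of Proposition~\ref{qc}), apply Theorem~\ref{esfp} to see that the morphism representing \eqref{laclau2} is homotopically finitely presented, and conclude by stability under homotopy cobase change. The only minor point is the last step: the paper invokes \cite[Proposition~1.2.3.3~(3)]{hagII} directly for the fact that homotopy cobase change preserves homotopically finitely presented morphisms of commutative algebras, whereas you appeal to \cite[Proposition~1.2.3.5]{hagII}, which concerns modules rather than commutative algebras---so your justification of that step is slightly off-target, though the claim itself is correct and the precise reference is the one above.
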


\begin{proof}
In the proof of Proposition \ref{qc} we showed that the homotopy pull-back in the statement can be obtained as a homotopy pull-back along \eqref{laclau2}. Hence this result follows from Theorem \ref{esfp} and the fact that the cobase change of a homotopically finitely morphism is homotopically finitely presented \cite[Proposition 1.2.3.3 (3)]{hagII}.
\end{proof}

We end this section with an example showing that the affine stack of associative algebra structures $\rmapu_{\operad{\C V}}(\mathtt{Ass},\mathtt{End}_{\Mod{A}}(M))$ need not be homotopically finitely presented over $\rspecu(\unit)$, even for $M$ perfect. This is a big contrast with the classical situation.

Let us place ourselves in any of the two complicial algebraic geometry contexts in \cite[\S 2.3]{hagII}. We will use homological notation for complexes $X_{*}$, so differentials have degree $-1$, $d\colon X_{n}\r X_{n-1}$, i.e.~our complexes are \emph{chain} complexes, not \emph{cochain} complexes. We write $|x|=n$ if $x\in X_{n}$.

Consider $\Bbbk=\mathbb Q$, $A=\mathbb Q$ and $M=\Sigma^{m}\mathbb Q$ the $m$-fold suspension of $\mathbb Q$. Denote $e$ the degree $m$ generator of $\Sigma^{m}\mathbb Q$. In order to compute a commutative differential graded algebra (CDGA) representing the affine stack $\rmapu_{\operad{\operatorname{Ch}(\mathbb Q)}}(\mathtt{Ass},\mathtt{End}_{\operatorname{Ch}(\mathbb Q)}(\Sigma^{m}\mathbb Q))$ we need a cofibrant resolution of $\mathtt{Ass}$ in $\operad{\operatorname{Ch}(\mathbb Q)}$. The following  operad is  a well-known cofibrant resolution.

\begin{defn}
The \emph{differential graded $A$-infinity operad} $\mathtt{A}_{\infty}$ is defined as follows. The underlying graded operad of $\mathtt{A}_{\infty}$ is freely generated by
$$\mu_{n}\in \mathtt{A}_{\infty}(n)_{n-2},\qquad n\geq 2,$$
and the differential is given by 
\begin{equation*}\label{dm}d(\mu_{n})=\sum_{
\begin{array}{c}\\[-16pt]
\scriptstyle p+q-1=n\\[-1.5mm]
\scriptstyle 1\leq i\leq p
\end{array}
} 
(-1)^{
q
p
+(q-1)i
}
\mu_{p}\circ_{i}\mu_{q}.
\end{equation*}
\end{defn}

The operad $\mathtt{A}_{\infty}$ is Stasheff's operad \cite{hahs}. It is fibrant and cofibrant, and the morphism $\mathtt{A}_{\infty}\r\mathtt{Ass}$, defined by $\mu_{2}\mapsto 1\in\mathbb Q=\mathtt{Ass}(2)$ and $\mu_{n}\mapsto 0$, $n>2$, is a trivial fibration.

 Let $A$ be any  CDGA. An $\mathtt{A}_{\infty}$-algebra structure on the $A$-module $\Sigma^{m}\mathbb Q\otimes_{\mathbb Q}A\cong\Sigma^{m}A$ is determined by degree $n-2$ morphisms of graded $\mathbb Q$-modules,
$$\mu_{n}\colon \Sigma^{m}\mathbb Q\otimes_{\mathbb Q}\st{n}\cdots\otimes_{\mathbb Q}\Sigma^{m}\mathbb Q\To \Sigma^{m}\mathbb Q\otimes_{\mathbb Q}A,\qquad n\geq2,$$
satisfying certain equations. These morphisms  are determined by the structure constants in $A$,
\begin{align}\label{constants}
x_{n},&&|x_{n}|&=n-2+mn-m,& n&\geq 2,
\end{align}
such that
$$\mu_{n}(e,\st{n}\dots, e)=x_{n}e.$$

One can straightforwardly check that a choice of structure constants determines an $\mathtt{A}_{\infty}$-algebra structure if and only if the following equations hold in $A$, 
\begin{align}\label{constantsdif}
d(x_{n})&=\sum_{
\begin{array}{c}\\[-16pt]
\scriptstyle p+q-1=n\\[-1.8mm]
\scriptstyle 1\leq i\leq p
\end{array}
}  (-1)^{(q-1)i+(p+m(i-1))(q+1)m}x_{q}x_{p},&n&\geq 2.
\end{align}
Hence we deduce the following result.

\begin{prop}
The stack $\rmapu_{\operad{\operatorname{Ch}(\mathbb Q)}}(\mathtt{Ass},\mathtt{End}_{\operatorname{Ch}(\mathbb Q)}(\Sigma^{m}\mathbb Q))$ is isomorphic to $\rspecu(B_{m})$ in the homotopy category of stacks, where $B_{m}$ is the CDGA whose underlying graded commutative algebra is freely generated by the symbols \eqref{constants} and such that the differential is defined by \eqref{constantsdif}.
\end{prop}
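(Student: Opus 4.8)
The plan is to identify the representing CDGA by running the argument behind Theorem~\ref{lageo} --- Lemma~\ref{horror} together with Remark~\ref{yayaya} --- for the \emph{explicit} cofibrant resolution $\mathtt{A}_{\infty}\st{\sim}\onto\mathtt{Ass}$, and to read off the representing algebra one cell at a time. Since $M=\Sigma^{m}\mathbb Q$ is perfect, being a suspension of the tensor unit, Theorem~\ref{lageo} already gives that the stack is affine, so only its identification is in question. First I would replace $\mathtt{Ass}$ by $\mathtt{A}_{\infty}$: by Remark~\ref{yayaya}, $\rmapu_{\operad{\operatorname{Ch}(\mathbb Q)}}(-,\mathtt{End}_{\operatorname{Ch}(\mathbb Q)}(\Sigma^{m}\mathbb Q))$ descends to a contravariant functor on the homotopy category of admissible operads, hence a weak equivalence $\mathtt{A}_{\infty}\st{\sim}\onto\mathtt{Ass}$ yields $\rmapu_{\operad{\operatorname{Ch}(\mathbb Q)}}(\mathtt{Ass},\mathtt{End}_{\operatorname{Ch}(\mathbb Q)}(\Sigma^{m}\mathbb Q))\simeq\rmapu_{\operad{\operatorname{Ch}(\mathbb Q)}}(\mathtt{A}_{\infty},\mathtt{End}_{\operatorname{Ch}(\mathbb Q)}(\Sigma^{m}\mathbb Q))$.

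Next I would exploit that $\mathtt{A}_{\infty}$ is free as a graded operad on the $\mu_{n}\in\mathtt{A}_{\infty}(n)_{n-2}$, $n\geq2$, with $d(\mu_{n})$ quadratic in $\mu_{2},\dots,\mu_{n-1}$. Writing $\mathtt{A}_{\infty}^{(n)}\subset\mathtt{A}_{\infty}$ for the sub-operad generated by $\mu_{2},\dots,\mu_{n}$, one has $\mathtt{A}_{\infty}=\colim_{n}\mathtt{A}_{\infty}^{(n)}$, a sequential colimit of cofibrations of cofibrant operads and hence a homotopy colimit, and each stage is a push-out
$$\mathtt{A}_{\infty}^{(n)}=\mathtt{A}_{\infty}^{(n-1)}\coprod_{\mathcal F(S_{n})}\mathcal F(D_{n}),$$
where $\mathcal F$ is the free operad functor, $D_{n}$ is the sequence concentrated in arity $n$ whose value is the acyclic complex $\mathbb Q$ in degrees $n-2$ and $n-3$, $S_{n}\subset D_{n}$ the sub-sequence with value $\Sigma^{n-3}\mathbb Q$, the map $\mathcal F(S_{n})\into\mathcal F(D_{n})$ is a generating cofibration of $\operad{\operatorname{Ch}(\mathbb Q)}$ (adjoining $\mu_{n}$ in degree $n-2$ together with a name for its differential), and the attaching map $\mathcal F(S_{n})\to\mathtt{A}_{\infty}^{(n-1)}$ sends the generator to $d(\mu_{n})=\sum_{p+q-1=n,\,1\le i\le p}(-1)^{qp+(q-1)i}\mu_{p}\circ_{i}\mu_{q}$. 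Here $\unit=\mathbb Q$ is cofibrant in $\operatorname{Ch}(\mathbb Q)$, so all these sequences are genuinely cofibrant and no $\unit$-cofibrant subtlety intervenes.

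Then I would apply $\rmapu_{\operad{\operatorname{Ch}(\mathbb Q)}}(-,\mathtt{End}_{\operatorname{Ch}(\mathbb Q)}(\Sigma^{m}\mathbb Q))$ and invoke Remark~\ref{yayaya}, which turns this diagram of admissible (indeed cofibrant) operads into a homotopy limit over $n$ of homotopy pull-backs of affine stacks. For the free pieces the computation in the proof of Theorem~\ref{lageo} applies directly: with $A=\mathbb Q$ and $M=\Sigma^{m}\mathbb Q$ one has $M^{\otimes n}\otimes M^{\vee}\cong\Sigma^{mn-m}\mathbb Q$, so $\rmapu(\mathcal F(S_{n}),\mathtt{End}(\Sigma^{m}\mathbb Q))\simeq\rspecu(\mathbb Q[y_{n}])$ and $\rmapu(\mathcal F(D_{n}),\mathtt{End}(\Sigma^{m}\mathbb Q))\simeq\rspecu(\mathbb Q[x_{n},\bar y_{n}])$, where $|y_{n}|=|\bar y_{n}|=|x_{n}|-1$ with $|x_{n}|$ as in \eqref{constants}, $d\bar y_{n}=0$, $dx_{n}=\bar y_{n}$ (so $\mathbb Q[x_{n},\bar y_{n}]$ is acyclic), and the cofibration of operads corresponds to the CDGA cofibration $\mathbb Q[y_{n}]\into\mathbb Q[x_{n},\bar y_{n}]$, $y_{n}\mapsto\bar y_{n}$. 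Dualising through the full embedding $\rspecu$, which carries homotopy colimits of commutative algebras to homotopy limits of affine stacks (the very mechanism making $\rmapu(\mathcal O,\mathtt{End}(M))$ affine in Theorem~\ref{lageo}), each homotopy pull-back becomes a homotopy push-out of CDGAs along $\mathbb Q[y_{n}]\into\mathbb Q[x_{n},\bar y_{n}]$, i.e.\ a Koszul--Tate extension ``adjoin $x_{n}$ with $dx_{n}$ the image of $y_{n}$''. By induction on $n$, starting from $\mathtt{A}_{\infty}^{(2)}$ --- the free graded operad on the single binary operation $\mu_{2}$ with zero differential, represented by $\mathbb Q[x_{2}]$ with $dx_{2}=0$ (empty sum in \eqref{constantsdif}) --- one obtains that $\rmapu(\mathtt{A}_{\infty}^{(n)},\mathtt{End}(\Sigma^{m}\mathbb Q))$ is represented by $\mathbb Q[x_{2},\dots,x_{n}]$ with differential \eqref{constantsdif}; the inductive step needs only that the image of $y_{n}$ in $\mathbb Q[x_{2},\dots,x_{n-1}]$ under the attaching map is the right-hand side of \eqref{constantsdif}, which is exactly the translation of the $\mathtt{A}_{\infty}$-algebra relations into structure-constant equations carried out above. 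Taking the colimit, $\rmapu(\mathtt{A}_{\infty},\mathtt{End}(\Sigma^{m}\mathbb Q))\simeq\operatorname{holim}_{n}\rspecu(\mathbb Q[x_{2},\dots,x_{n}])\simeq\rspecu(\colim_{n}\mathbb Q[x_{2},\dots,x_{n}])=\rspecu(B_{m})$, which is the claim.

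I expect the main obstacle to be the cellular presentation in the second step: making precise that $\mathtt{A}_{\infty}$ is built by exactly these cells, so that the attaching maps \emph{are} the operadic differential, and then checking that the attaching element in $\mathbb Q[x_{2},\dots,x_{n-1}]$ matches the right-hand side of \eqref{constantsdif} on the nose, signs included. The sign bookkeeping here is the very same Koszul-rule calculation that underlies the derivation of \eqref{constantsdif} from the $\mathtt{A}_{\infty}$-algebra axioms, so it may be quoted rather than redone; by contrast, the remaining points --- which homotopy (co)limits $\rspecu$ interchanges, and that $\colim_{n}\mathbb Q[x_{2},\dots,x_{n}]$ is already a homotopy colimit since its bonding maps are cofibrations of cofibrant CDGAs --- are routine. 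As a check on $\pi_{0}$, Remark~\ref{vayavaya} identifies the value of the stack at a CDGA $D$ with $\rmap_{\operad{\operatorname{Ch}(\mathbb Q)}}(\mathtt{A}_{\infty},\mathtt{End}_{\Mod{D}}(\Sigma^{m}D))$, whose set of components is the set of $\mathtt{A}_{\infty}$-structures on $\Sigma^{m}D$ up to homotopy, i.e.\ of tuples of structure constants in $D$ satisfying \eqref{constantsdif} up to homotopy, which is exactly $\pi_{0}\rmap_{\comm(\operatorname{Ch}(\mathbb Q))}(B_{m},D)=\pi_{0}\rspecu(B_{m})(D)$, $B_{m}$ being semifree hence cofibrant.
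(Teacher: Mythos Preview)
Your argument is correct, but it is considerably more elaborate than what the paper does. In the paper this proposition has no displayed proof: it is simply introduced by ``Hence we deduce the following result'' after the structure-constant computation preceding it. The paper's implicit argument is a single global identification of functors of points: for any CDGA $A$, operad maps $\mathtt{A}_{\infty}\to\mathtt{End}_{\Mod{A}}(\Sigma^{m}A)$ are, by the computation of structure constants and their relations \eqref{constants}--\eqref{constantsdif}, in natural bijection with CDGA maps $B_{m}\to A$; since $\mathtt{A}_{\infty}$ and $B_{m}$ are both cofibrant, this natural bijection of hom-sets upgrades to an equivalence of mapping spaces, and Remark~\ref{vayavaya} finishes.

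Your route instead unfolds the mechanism behind Theorem~\ref{lageo} along the cellular filtration $\mathtt{A}_{\infty}^{(n)}$ of $\mathtt{A}_{\infty}$, building $B_{m}$ one generator at a time as an iterated homotopy push-out of free CDGAs. This is the same content, organized inductively rather than globally: the ``attaching element equals the right-hand side of \eqref{constantsdif}'' step in your induction \emph{is} the structure-constant computation the paper performs all at once. What your approach buys is that it makes explicit how the general affineness proof specializes here and why the representing algebra is semifree; what the paper's approach buys is brevity --- once one has \eqref{constantsdif} in hand, the identification of $B_{m}$ is immediate and no cellular bookkeeping or iterated homotopy (co)limit argument is needed. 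Your final $\pi_{0}$ check is essentially the paper's entire argument.
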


\begin{prop}\label{hfp}
For $m\leq -2$, the CDGA $B_{m}$ is not homotopically finitely presented. 
\end{prop}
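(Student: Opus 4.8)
The plan is to obstruct homotopical finite presentation by a cotangent complex computation, combined with a filtered colimit argument. Since $\mathbb Q=\unit$ is the initial CDGA, saying that $B_m$ is homotopically finitely presented over $\rspecu(\unit)$ amounts to saying that $B_m$ is a homotopically finitely presented object of the model category of CDGAs over $\mathbb Q$, i.e.~that $\rmap(B_m,-)$ commutes with filtered homotopy colimits. I first record that $B_m$ is a cofibrant CDGA: its underlying graded commutative algebra is free on the graded vector space $V=\bigoplus_{n\geq 2}\mathbb Q x_n$, and since $p+q-1=n$ with $p,q\geq 2$ forces $p,q<n$, the differential $d(x_n)$ only involves generators of strictly smaller index, so the filtration by the $x_j$ with $j\leq N$ exhibits $B_m$ as a transfinite composite of cell attachments over $\mathbb Q$. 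Consequently the cotangent complex $\mathbb L_{B_m/\mathbb Q}$ is represented by the module of Kähler differentials $\Omega_{B_m/\mathbb Q}=B_m\otimes_{\mathbb Q}V$, a quasi-free $B_m$-module on $V$.

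Next I would compute the homology of $\mathbb L_{B_m/\mathbb Q}\otimesl_{B_m}\mathbb Q$ along the augmentation $\epsilon\colon B_m\to\mathbb Q$ sending every $x_n$ to $0$; this is a genuine map of CDGAs because each $d(x_n)=\sum(\pm)x_qx_p$ lies in the augmentation ideal. Because every $d(x_n)$ is a sum of products of generators, the differential of $B_m$ has vanishing linear part, so the differential induced on $\Omega_{B_m/\mathbb Q}\otimes_{B_m,\epsilon}\mathbb Q\cong V$ is zero, and hence $H_*(\mathbb L_{B_m/\mathbb Q}\otimesl_{B_m}\mathbb Q)\cong V$. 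For $m\leq -2$ the degrees $\lvert x_n\rvert=n-2+mn-m=(n-1)(m+1)-1$ are pairwise distinct and tend to $-\infty$ (as $m+1\leq -1$), so this complex has nonzero homology in infinitely many degrees; in particular it is not a perfect complex over $\mathbb Q$.

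Finally I would assume that $B_m$ is homotopically finitely presented and derive a contradiction. Let $B_m^{(N)}\subset B_m$ be the sub-CDGA generated by $x_2,\dots,x_N$; each inclusion $B_m^{(N)}\hookrightarrow B_m^{(N+1)}$ is a pushout of a generating cofibration of CDGAs (attaching the single generator $x_{N+1}$, whose differential already lies in $B_m^{(N)}$), so $B_m=\colim_N B_m^{(N)}$ is a homotopy colimit. If $B_m$ were homotopically finitely presented, then $\pi_0\rmap(B_m,B_m)=\colim_N\pi_0\rmap(B_m,B_m^{(N)})$, so $\id{B_m}$ would be represented by some $f\colon B_m\to B_m^{(N)}$ with $\iota_N\circ f\simeq\id{B_m}$, where $\iota_N\colon B_m^{(N)}\hookrightarrow B_m$. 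Functoriality of the cotangent complex then exhibits $\mathbb L_{B_m/\mathbb Q}$ as a retract, in $\ho\Mod{B_m}$, of $\mathbb L_{B_m^{(N)}/\mathbb Q}\otimesl_{B_m^{(N)}}B_m$. The latter is $B_m\otimes_{\mathbb Q}(\mathbb Q x_2\oplus\cdots\oplus\mathbb Q x_N)$, which is finite free as a graded $B_m$-module and hence a perfect $B_m$-module; since a retract of a perfect module is perfect, $\mathbb L_{B_m/\mathbb Q}$ is perfect, and therefore $\mathbb L_{B_m/\mathbb Q}\otimesl_{B_m}\mathbb Q$ is a perfect complex of $\mathbb Q$-modules, contradicting the previous paragraph.

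The step I expect to be the most delicate is the second one, namely the identification of $\mathbb L_{B_m/\mathbb Q}\otimesl_{B_m}\mathbb Q$ with $(V,0)$: one must verify both that $\Omega_{B_m/\mathbb Q}$ genuinely models $\mathbb L_{B_m/\mathbb Q}$ (this is where cofibrancy of $B_m$ is used) and that the induced differential vanishes after base change along $\epsilon$ precisely because $d$ is decomposable. As an alternative to the filtered colimit argument of the last paragraph one could invoke directly that homotopically finitely presented commutative algebras in a HAG context have a perfect cotangent complex (cf.~\cite{hagII}), but the colimit argument keeps the proof self-contained.
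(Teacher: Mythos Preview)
Your proof is correct and follows the same underlying strategy as the paper's: write $B_m$ as the filtered homotopy colimit of the sub-CDGAs generated by $x_2,\dots,x_N$, observe that homotopical finite presentation would make the identity factor through a finite stage, and derive a contradiction from an invariant that is infinite for $B_m$ but finite for each $B_m^{(N)}$.

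The difference is purely in the choice of invariant and the language used to control it. The paper phrases everything in terms of rational homotopy theory: it notes that for $m\leq -2$ each $B_m^{(N)}$ and $B_m$ are \emph{minimal Sullivan algebras}, then invokes \cite[Proposition II.14.13]{rhtfht} to say that the factorization of the identity would induce a quasi-isomorphism on \emph{linear parts}, which is impossible since the linear part of $B_m$ is the infinite-dimensional graded vector space spanned by $\{x_n\}_{n\geq 2}$ with zero differential, while the linear part of $B_m^{(N)}$ is finite-dimensional. Your invariant $\mathbb L_{B_m/\mathbb Q}\otimesl_{B_m}\mathbb Q\cong (V,0)$ is exactly this linear part (the indecomposables of a quasi-free CDGA), and your retract-of-perfect argument is the cotangent-complex analogue of the FHT result. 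What your packaging buys is that it stays within the HAG/cotangent complex framework already present in the paper and avoids the external reference to Sullivan model theory; what the paper's packaging buys is brevity, since the citation handles the functoriality verification you spell out by hand.
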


\begin{proof}
In this range, $B_{m}$ is a minimal Sullivan algebra in the sense of \cite[\S II.12]{rhtfht}. Let $B_{m,r}\subset B_{m}$, $r\geq 2$, be the subalgebra generated by the $x_{n}$ with $n\leq r$. Notice that the differential of $B_{m} $ restricts to $B_{m,r} $, so $B_{m,r} $ is actually a sub-CDGA of $B_{m}$. These sub-CDGA are also minimal Sullivan algebras. They define an increasing filtration of $B_{m} $ such that $$B_{m} =\bigcup_{r\geq 2} B_{m,r} =\colim_{r} B_{m,r}=\operatorname{hocolim}_{r} B_{m,r} .$$ This is indeed  a homotopy colimit  since the inclusions $B_{m,r}\subset B_{m,r+1}$ are cofibrations of CDGAs. 

If $B_{m}$ were homotopically finitely presented, the identity in $B_{m}$ would factor up to homotopy through the inclusion of a certain $B_{m,r}\subset B$. This factorization would induce a quasi-isomorphism from the linear part of $B_{m}$ to itself by \cite[Proposition II.14.13]{rhtfht}.  This is impossible, since the linear part of $B_{m}$ is the unbounded graded $\mathbb Q$-module  with basis $\{x_{n}\}_{n\geq 2}$ and trivial differential, and the linear part of  $B_{m,r}$ is bounded. It is actually the subcomplex spanned by $\{x_{n}\}_{r\geq n\geq 2}$.
\end{proof}

The stack $\underline{\operatorname{Vect}}_{n}$ of rank $n$ vector bundles, $n\geq 0$, is $1$-geometric in the complicial algebraic geometry contexts since they satisfy the assumptions recalled in Example \ref{1geo1} (1). We consider the substack $\underline{\operatorname{Vect}}_{n}[m]\subset\qcoh$, $m\in \mathbb Z$, such that, for any CDGA $A$, the connected components of $\underline{\operatorname{Vect}}_{n}[m](A)$ correspond to the $m$-fold suspensions of rank $n$ vector bundles. Obviously, suspension defines an isomorphism $\underline{\operatorname{Vect}}_{n}[m]\cong \underline{\operatorname{Vect}}_{n}$, $m\in\mathbb Z$, in the homotopy category of stacks, hence $\underline{\operatorname{Vect}}_{n}[m]$ is $1$-geometric. Notice, however, that this isomorphism is not compatible with the inclusions into $\qcoh$. 

\begin{cor}
For $m\leq -2$, the affine morphism between $1$-geometric stacks $\xi^{\mathtt{Ass}}_{\underline{\operatorname{Vect}}_{1}[m]}\colon \algebrau{\mathtt{Ass}}{\underline{\operatorname{Vect}}_{1}[m]}\r\underline{\operatorname{Vect}}_{1}[m]$  is not categorically locally finitely presented  in the sense of \cite[Definition 1.3.6.4 (1)]{hagII}.
\end{cor}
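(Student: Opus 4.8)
The plan is to reduce the statement to Proposition \ref{hfp} by exhibiting a homotopy base change of $\xi^{\mathtt{Ass}}_{\underline{\operatorname{Vect}}_{1}[m]}$ which is the morphism $\rspecu(B_{m})\r\rspecu(\mathbb{Q})$, and then invoking the stability of categorical local finite presentation under homotopy base change. First I would observe, as in the proof of Proposition \ref{1geo2} via Theorem \ref{lageo}, that $\xi^{\mathtt{Ass}}_{\underline{\operatorname{Vect}}_{1}[m]}$ is an affine morphism; in particular any of its homotopy base changes is again a morphism of affine stacks, to which the notion of being categorically locally finitely presented applies and for which it is equivalent to being represented by a homotopically finitely presented morphism of commutative algebras.

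Next I would pick the point of $\underline{\operatorname{Vect}}_{1}[m]$ given by the perfect $\mathbb{Q}$-module $M=\Sigma^{m}\mathbb{Q}$, that is, the morphism $g\colon\rspecu(\mathbb{Q})\r\underline{\operatorname{Vect}}_{1}[m]\subset\qcoh$; this is legitimate because $\Sigma^{m}\mathbb{Q}$ is the $m$-fold suspension of the trivial rank $1$ vector bundle. By the defining homotopy pull-back square of $\algebrau{\mathtt{Ass}}{\underline{\operatorname{Vect}}_{1}[m]}$ in Definition \ref{1geo1.5}, the homotopy base change of $\xi^{\mathtt{Ass}}_{\underline{\operatorname{Vect}}_{1}[m]}$ along $g$ coincides with the homotopy pull-back of $\xi^{\mathtt{Ass}}$ along $g$, namely $\rmapu_{\operad{\operatorname{Ch}(\mathbb{Q})}}(\mathtt{Ass},\mathtt{End}_{\operatorname{Ch}(\mathbb{Q})}(\Sigma^{m}\mathbb{Q}))$, and by the proposition immediately preceding Proposition \ref{hfp} this stack is $\rspecu(B_{m})$. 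Hence the homotopy base change of $\xi^{\mathtt{Ass}}_{\underline{\operatorname{Vect}}_{1}[m]}$ along $g$ is the morphism $\rspecu(B_{m})\r\rspecu(\mathbb{Q})$.

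Then I would conclude by contradiction: if $\xi^{\mathtt{Ass}}_{\underline{\operatorname{Vect}}_{1}[m]}$ were categorically locally finitely presented in the sense of \cite[Definition 1.3.6.4 (1)]{hagII}, then so would be the morphism $\rspecu(B_{m})\r\rspecu(\mathbb{Q})$, this property being stable under homotopy base change. By the first paragraph this would force $B_{m}$ to be homotopically finitely presented over $\mathbb{Q}$, contradicting Proposition \ref{hfp}.

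I expect the only genuine subtleties to be, first, pinning down that for morphisms of affine stacks the condition of \cite[Definition 1.3.6.4 (1)]{hagII} is equivalent to homotopic finite presentation of the corresponding algebra morphism, so that the contradiction with Proposition \ref{hfp} is exactly of the right form; and second, checking that restriction along $\underline{\operatorname{Vect}}_{1}[m]\subset\qcoh$ followed by base change along $g$ yields $\rspecu(B_{m})$ as a stack over $\rspecu(\mathbb{Q})$, not merely an object pointwise weakly equivalent to it. Both reduce to chasing the pull-back square of Definition \ref{1geo1.5} together with the identification established just before Proposition \ref{hfp}, so I do not anticipate any serious difficulty beyond this bookkeeping.
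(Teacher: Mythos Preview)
Your proposal is correct and follows essentially the same approach as the paper: both take the homotopy pull-back of $\xi^{\mathtt{Ass}}_{\underline{\operatorname{Vect}}_{1}[m]}$ along the point $\rspecu(\mathbb{Q})\r\underline{\operatorname{Vect}}_{1}[m]$ represented by $\Sigma^{m}\mathbb{Q}$, identify the fiber with $\rspecu(B_{m})$ via the proposition preceding Proposition~\ref{hfp}, and conclude from Proposition~\ref{hfp} that this fiber is not homotopically finitely presented. The paper's proof is a single sentence, while you have spelled out the intermediate identifications and the stability of categorical local finite presentation under homotopy base change; these additions are sound and there is nothing substantively different.
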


\begin{proof}
This follows from the fact that the homotopy pull-back of $\xi^{\mathtt{Ass}}_{\underline{\operatorname{Vect}}_{1}[m]}$ along the essentially unique morphism $\rspecu(\mathbb Q)\r \underline{\operatorname{Vect}}_{1}[m]$, represented by $\Sigma^{m}\mathbb Q$, is the affine stack $\rmapu_{\operad{\operatorname{Ch}(\mathbb Q)}}(\mathtt{Ass},\mathtt{End}_{\operatorname{Ch}(\mathbb Q)}(\Sigma^{m}\mathbb Q))$, which is not homotopically finitely presented.
\end{proof}

%
%

\appendix

\section{The very strong unit axiom}

In this short appendix we consider a strengthening of Hovey's unit axiom and its consequences. Here, unlike in Definition \ref{nota1} and the rest of the paper, monoidal model categories are not assumed to satisfy the strong unit axiom, but just Hovey's unit axiom.

\begin{defn}\label{vsua}
A monoidal model category $\C C$ satisfies the \emph{very strong unit axiom} if for any object $X$ and any cofibrant replacement $q\colon\tilde\unit\st{\sim}\onto\unit$ of the tensor unit, the morphisms $X\otimes q$ and $q\otimes X$ are weak equivalences.
\end{defn}

This axiom has been implicitly used  in the proof of \cite[Proposition 1.15]{htapm}. The name `very strong unit axiom' was coined by Berger in private communication.

\begin{rem}\label{sua}
If the very strong unit axiom holds for a certain cofibrant resolution of $\unit$ then it holds for any cofibrant resolution of $\unit$. In particular, it is satisfied when $\unit$ is cofibrant.

The \emph{strong unit axiom} considered in \cite[Definition A.9]{htnso2} is a weaker version where $X$ runs only over the pseudo-cofibrant objects. Recall that $X$ is \emph{pseudo-cofibrant} if the functors $X\otimes -$ and $-\otimes X$ preserve cofibrations. This is equivalent to say that $X\otimes -$ and $-\otimes X$ are left Quillen functors, see \cite[Remark A.2]{htnso2}.
Cofibrant objects are pseudo-cofibrant by the push-out product axiom. The paradigm of pseudo-cofibrant object which need not be cofibrant is the tensor unit $\unit$. Moreover, objects $X$ for which there exists a cofibration $\unit\into X$ are also pseudo-cofibrant, see \cite[Remark B.2]{htnso2}. These objects are called \emph{$\unit$-cofibrant}.
\end{rem}

The following three lemmas can be proved as \cite[Lemmas A.11, A.12 and A.13]{htnso2}, respectively, using the very strong unit axiom instead of the strong unit axiom.

\begin{lem}\label{davidwhite}
Suppose that, for $q\colon\tilde\unit\st{\sim}\onto\unit$ a cofibrant resolution of the tensor unit, the functors $\tilde\unit\otimes -$ and $-\otimes\tilde\unit$ preserve weak equivalences.  Then the very strong unit axiom holds.
\end{lem}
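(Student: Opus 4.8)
The plan is to reduce the very strong unit axiom to Hovey's unit axiom (which is in force throughout this appendix) by a short naturality argument. First I would invoke the first assertion of Remark \ref{sua}: it suffices to verify the axiom for the single cofibrant resolution $q\colon\tilde\unit\st{\sim}\onto\unit$ appearing in the hypothesis. So the goal reduces to showing that, for every object $X$ of $\C C$, the maps $q\otimes X\colon\tilde\unit\otimes X\to\unit\otimes X\cong X$ and $X\otimes q\colon X\otimes\tilde\unit\to X\otimes\unit\cong X$ are weak equivalences. When $X$ is cofibrant this is exactly Hovey's unit axiom, so all the work is in passing from cofibrant objects to arbitrary ones.

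For a general $X$, I would choose a cofibrant replacement $p\colon QX\st{\sim}\onto X$ and form the naturality square of the transformation $q\otimes(-)$, with vertices $\tilde\unit\otimes QX$, $\unit\otimes QX$, $\tilde\unit\otimes X$, $\unit\otimes X$, horizontal maps $q\otimes QX$ and $q\otimes X$, and vertical maps $\tilde\unit\otimes p$ and $\unit\otimes p$. Three edges are already weak equivalences: the top edge $q\otimes QX$ by the cofibrant case just settled, the right edge $\unit\otimes p$ because the unit isomorphism identifies it with $p$, and the left edge $\tilde\unit\otimes p$ precisely because the hypothesis says $\tilde\unit\otimes-$ preserves weak equivalences. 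Two-out-of-three then gives that $q\otimes X$ is a weak equivalence. The claim for $X\otimes q$ is symmetric, using the naturality square of $(-)\otimes q$ together with the hypothesis that $-\otimes\tilde\unit$ preserves weak equivalences.

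I do not anticipate a genuine obstacle here; this is the argument of \cite[Lemma A.11]{htnso2} with ``all objects'' in place of ``pseudo-cofibrant objects'' and the very strong unit axiom in place of the strong unit axiom. The only subtlety worth flagging --- and the reason the hypothesis is needed at all --- is that $\unit$ itself need not be cofibrant, so Hovey's unit axiom does not directly cover a non-cofibrant $X$; the assumption that tensoring with $\tilde\unit$ preserves all weak equivalences is exactly the ingredient that lets one transport the cofibrant case across an arbitrary cofibrant replacement.
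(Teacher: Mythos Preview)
Your proposal is correct and is precisely the argument the paper intends: the paper does not spell out a proof but points to \cite[Lemma A.11]{htnso2}, and your naturality-square-plus-two-out-of-three argument is exactly that proof with ``pseudo-cofibrant'' replaced by ``arbitrary''. There is nothing to add.
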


\begin{rem}\label{setiene}
In particular, the very strong unit axiom holds in monoidal model categories where cofibrant objects are \emph{flat}. Recall that this means that the functors $X\otimes-$ and $-\otimes X$ preserve weak equivalences if $X$ is cofibrant.
\end{rem}

The converse  is also true, even something stronger holds.

\begin{lem}\label{chari}
If the very strong unit axiom holds, a morphism $f\colon U\r V$ is a weak equivalence if and only if $f\otimes\tilde\unit$ is a weak equivalence for some cofibrant replacement $\tilde\unit$ of the tensor unit. The same is true replacing $f\otimes\tilde\unit$ with $\tilde\unit\otimes f$.
\end{lem}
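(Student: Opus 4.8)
The plan is to reduce the statement to a single application of the two-out-of-three property, parallel to the proof of \cite[Lemma A.13]{htnso2} but feeding in the very strong unit axiom where the strong unit axiom was used there. First I would fix the cofibrant resolution $q\colon\tilde\unit\st{\sim}\onto\unit$ appearing in the statement and form the naturality square of $f\otimes(-)$ evaluated at $q$: its horizontal arrows are $f\otimes\tilde\unit$ and $f\otimes\unit$, and its vertical arrows are $U\otimes q$ and $V\otimes q$. Below it I would stack the naturality square of the right unit isomorphism $(-)\otimes\unit\cong\mathrm{id}$ along $f$, whose vertical arrows $U\otimes\unit\cong U$ and $V\otimes\unit\cong V$ are isomorphisms. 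The composite vertical maps then identify $f\otimes\tilde\unit$ with $f$ up to the two maps $U\otimes q$, $V\otimes q$ and the two unit isomorphisms.

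Next I would invoke the very strong unit axiom (Definition \ref{vsua}) to conclude that $U\otimes q$ and $V\otimes q$ are weak equivalences, and apply two-out-of-three to the stacked commutative squares: this gives that $f\otimes\tilde\unit$ is a weak equivalence if and only if $f$ is. The case of $\tilde\unit\otimes f$ is handled identically, replacing $(-)\otimes q$ by $q\otimes(-)$ and the right unit isomorphism by the left one, the very strong unit axiom again supplying that $q\otimes U$ and $q\otimes V$ are weak equivalences.

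I do not expect a genuine obstacle here. The one point to be careful about is the quantifier over cofibrant resolutions: in the ``if'' direction I would run the diagram with exactly the given $\tilde\unit$, while in the ``only if'' direction the same diagram applies to any chosen resolution, so that, together with Remark \ref{sua}, ``for some'' and ``for all'' cofibrant replacements are interchangeable in the statement. The only way the argument could go wrong is by misusing naturality of the coherence isomorphisms, so I would make sure to use that the unit constraint is a (monoidal) natural transformation.
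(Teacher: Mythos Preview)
Your argument is correct and is exactly the intended one: the paper does not spell out a proof but defers to \cite[Lemmas A.11, A.12 and A.13]{htnso2}, and the two-out-of-three argument along the square built from $U\otimes q$, $V\otimes q$ and the unit isomorphisms is precisely what is meant. One small bookkeeping point: in the paper's correspondence this lemma is the analogue of \cite[Lemma A.12]{htnso2}, not A.13 (the latter matches Lemma~\ref{pseudowe}), so you may want to adjust the citation in your write-up.
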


The following result is used to show that module categories in a HAG context are algebras over the base symmetric monoidal model category.

\begin{lem}\label{pseudowe}
Let $\C{C}$ be a monoidal model category satisfying the very strong unit axiom.
If $f\colon U\st{\sim}\r V$ is a weak equivalence  with pseudo-cofibrant source and target and $X$ is any object, then $f\otimes X$ and $X\otimes f$ are weak equivalences.
\end{lem}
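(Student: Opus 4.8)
The plan is to follow the pattern of \cite[Lemma A.13]{htnso2}, using the very strong unit axiom wherever the strong unit axiom is used there; this is exactly what will allow $X$ to be arbitrary rather than merely pseudo-cofibrant. By symmetry it is enough to prove that $f\otimes X$ is a weak equivalence, the case of $X\otimes f$ being identical once one invokes the mirror-image versions of the push-out product axiom, of Lemma \ref{chari}, and of the very strong unit axiom.

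The first step is a reduction via Ken Brown's lemma. Let $\C C^{pc}$ denote the full subcategory of pseudo-cofibrant objects. The push-out product axiom shows that $\C C^{pc}$ contains the initial object, is closed under coproducts, and is closed under taking the target of a cofibration with pseudo-cofibrant source: if $U\into W$ is a cofibration with $U$ pseudo-cofibrant and $j$ is any cofibration in $\C C$, then $W\otimes j$ factors as the cobase change along $W\otimes(\mathrm{dom}\,j)\to W\otimes(\mathrm{cod}\,j)$ of the cofibration $U\otimes j$ followed by the cofibration $(U\into W)\odot j$, hence is a cofibration, so $W$ is pseudo-cofibrant; similarly on the other side. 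Thus $\C C^{pc}$, with the inherited cofibrations and weak equivalences, carries enough structure for Ken Brown's lemma to apply to the functor $(-)\otimes X\colon\C C^{pc}\to\C C$. Since $f$ is a weak equivalence between pseudo-cofibrant objects, it therefore suffices to show that $j\otimes X$ is a weak equivalence for every trivial cofibration $j\colon U\into W$ between pseudo-cofibrant objects and every object $X$.

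For the trivial cofibration step I would fix a cofibrant replacement $q\colon\tilde\unit\st{\sim}\onto\unit$ of the tensor unit and a cofibrant replacement $q_X\colon\tilde X\st{\sim}\onto X$ of $X$. Since $\tilde\unit$ is cofibrant, hence pseudo-cofibrant, the functor $\tilde\unit\otimes(-)$ is left Quillen, so $\tilde\unit\otimes j$ is again a trivial cofibration and, because $U$ and $W$ are pseudo-cofibrant, $\tilde\unit\otimes U$ and $\tilde\unit\otimes W$ are cofibrant; thus $\tilde\unit\otimes j$ is a trivial cofibration between cofibrant objects. By the push-out product axiom, $(\tilde\unit\otimes j)\otimes\tilde X$ is then a trivial cofibration, in particular a weak equivalence. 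Tensoring $q_X$ with the cofibrant objects $\tilde\unit\otimes U$ and $\tilde\unit\otimes W$ and using the very strong unit axiom together with Lemma \ref{chari} to identify the resulting maps as weak equivalences, one passes along the commutative square relating these data from $(\tilde\unit\otimes j)\otimes\tilde X$ to $(\tilde\unit\otimes j)\otimes X$ and concludes, by two-out-of-three, that the latter is a weak equivalence. Finally $(\tilde\unit\otimes j)\otimes X=\tilde\unit\otimes(j\otimes X)$, so a last application of Lemma \ref{chari} yields that $j\otimes X$ is a weak equivalence, completing the argument.

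The delicate point — the one I expect to be the main obstacle — is precisely the passage from tensoring with the cofibrant replacement $\tilde X$ to tensoring with the arbitrary object $X$: neither the push-out product axiom nor Hovey's unit axiom controls the homotopical behaviour of $(-)\otimes X$ when $X$ is not pseudo-cofibrant, and it is exactly the very strong unit axiom, which makes $Y\otimes q$ a weak equivalence for \emph{every} object $Y$ rather than only for the pseudo-cofibrant ones, that closes this gap, just as in \cite[Lemma A.13]{htnso2}.
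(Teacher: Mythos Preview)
Your plan coincides with the paper's: both defer to the proof of \cite[Lemma A.13]{htnso2}, replacing the strong unit axiom by the very strong unit axiom, and the paper gives no further details. The Ken Brown reduction to trivial cofibrations $j\colon U\into W$ between pseudo-cofibrant objects is fine, and so is the observation that $\tilde\unit\otimes j$ is a trivial cofibration between cofibrant objects and that $(\tilde\unit\otimes j)\otimes\tilde X$ is a weak equivalence by the push-out product axiom.

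The gap is at the passage from $\tilde X$ to $X$. You assert that the horizontal maps $(\tilde\unit\otimes U)\otimes q_X$ and $(\tilde\unit\otimes W)\otimes q_X$ in your square are weak equivalences ``using the very strong unit axiom together with Lemma~\ref{chari}'', but neither tool yields this. The very strong unit axiom says $Y\otimes q$ is a weak equivalence for $q$ a cofibrant replacement of the \emph{tensor unit} $\unit$; it says nothing about $C\otimes q_X$ for $q_X$ a cofibrant replacement of an arbitrary $X$, even when $C$ is cofibrant. Lemma~\ref{chari} only lets you slide a copy of $\tilde\unit$ in or out: writing $(\tilde\unit\otimes U)\otimes q_X=\tilde\unit\otimes(U\otimes q_X)$ and applying it reduces the question to whether $U\otimes q_X$ is a weak equivalence, which is no easier. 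Your closing paragraph---``the very strong unit axiom, which makes $Y\otimes q$ a weak equivalence for every object $Y$\dots closes this gap''---confuses $q$ with $q_X$; the axiom speaks only about the unit resolution $q$. What you are implicitly using, namely that a cofibrant object tensored with an arbitrary weak equivalence is again a weak equivalence, is flatness of cofibrant objects, which by Remark~\ref{setiene} is strictly stronger than the very strong unit axiom. The precise mechanism by which the argument of \cite[Lemma A.13]{htnso2} handles this step is exactly what your reconstruction would need to supply.
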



\end{document}